\documentclass[11pt]{article}
\usepackage{lmodern}
\usepackage{mainsty}
\usepackage[utf8]{inputenc}
\usepackage{authblk}
\usepackage{upref}

\title{A new measure of dependence: Integrated $R^2$}
\author{Mona Azadkia, Pouya Roudaki\footnote{Department of Statistics, London School of Economics \& Political Science
}}

\date{\today}

\begin{document}


\maketitle
\begin{abstract}
    We introduce a novel measure of dependence that captures the extent to which a random variable $Y$ is determined by a random vector $\bbx$. The measure equals zero precisely when $Y$ and $\bbx$ are independent, and it attains one exactly when $Y$ is almost surely a measurable function of $\bbx$. We further extend this framework to define a measure of conditional dependence between $Y$ and $\bbx$ given $\bbz$. We propose a simple and interpretable estimator with computational complexity comparable to classical correlation coefficients, including those of Pearson, Spearman, and Chatterjee. Leveraging this dependence measure, we develop a tuning-free, model-agnostic variable selection procedure and establish its consistency under appropriate sparsity conditions. Extensive experiments on synthetic and real datasets highlight the strong empirical performance of our methodology and demonstrate substantial gains over existing approaches.
\end{abstract}


\section{Introduction}

Measuring the degree of dependence between two random variables is a longstanding problem in statistics, with numerous methods proposed over the years; for recent surveys, see~\cite{josse2016measuring, chatterjee2024survey}. Among the most widely used classical measures of statistical association are Pearson’s correlation coefficient, Spearman’s $\rho$, and Kendall’s $\tau$. These coefficients are highly effective for identifying monotonic relationships, and their asymptotic behaviour is well-established. However, a major limitation is that they perform poorly in detecting non-monotonic associations, even when there is no noise in the data.

To address this deficiency, there have been many proposals, such as the maximal correlation coefficient~\cite{breiman1985estimating, gebelein1941statistische, hirschfeld1935connection, renyi1959measures}, various methods based on joint cumulative distribution functions, and
ranks \cite{bergsma2014consistent, blum1961distribution, csorgo1985testing, deb2019multivariate, drton2018high, gamboa2018sensitivity, han2017distribution, hoeffding1948non, nandy2016large, puri1971nonparametric, romano1988bootstrap, rosenblatt1975quadratic, wang2017generalized, weihs2016efficient, weihs2018symmetric, yanagimoto1970measures, zhou2025association}, kernel-based methods~\cite{gretton2005measuring, gretton2008kernel, pfister2018kernel, sen2014testing, zhang2018large}  information theoretic coefficients~\cite{kraskov2004estimating, linfoot1957informational, reshef2011detecting}, coefficients based on copulas \cite{dette2013copula, lopez2013randomized, schweizer1981nonparametric, sklar1959fonctions, zhang2019bet, griessenberger2022multivariate}, and coefficients based on pairwise
distances~\cite{friedman1983graph, heller2013consistent, lyons2013distance, szekely2009brownian, szekely2007measuring, pan2020ball}. 

Some of these coefficients are widely used in practice; however, they suffer from two common limitations. First, most are primarily designed to test for independence rather than to quantify the strength of the dependence between variables. Second, many of these coefficients lack simple asymptotic distributions under the null hypothesis of independence, which hampers the efficient computation of p-values, since they rely on permutation-based tests.

Recently, Chatterjee introduced a new coefficient of correlation~\cite{chatterjee2021new} that is as simple to compute as classical coefficients, yet it serves as a consistent estimator of a dependence measure $\xi(X, Y)$ that equals 0 if and only if the variables are independent, and 1 if and only if one is a measurable function of the other. Moreover, like classical coefficients, it enjoys a simple asymptotic theory under the null hypothesis of independence. The limiting value $\xi(X, Y)$ was previously introduced in~\cite{dette2013copula} as the limit of a copula-based estimator in the case where $X$ and $Y$ are continuous. 

The simplicity, efficiency, and interpretability of Chatterjee's correlation have sparked significant interest, leading to a growing body of research on the behaviour of the coefficient and its extensions to more complex settings~\cite{azadkia2021simple, cao2020correlations, shi2022power, gamboa2022global, deb2020measuringassociationtopologicalspaces, huang2022kernel, auddy2024exact, lin2023boosting, lin2024failure, fuchs2024quantifying, griessenberger2022multivariate, zhang2023asymptotic, bickel2022measures, han2024azadkia, ansari2022simple, zhang2025relationships, shi2024azadkia, strothmann2024rearranged, dette2025simple, bucher2024lack, kroll2024asymptotic, tran2024rank,yang2025coverage, huang2025multivariate}.

\subsection{Key Contributions}
Building on this line of work, the first contribution of this paper is a new coefficient of dependence with the following properties 
\begin{enumerate}
    \item it has a simple expression,
    \item it is fully non-parametric,
    \item it requires no tuning parameters,
    \item it does not rely on estimating densities or characteristic functions,
    \item it can be computed from data in $O(n \log n)$ time, where $n$ denotes the sample size,
    \item asymptotically, it converges to a limit in $[0,1]$, where the limit equals $0$ if and only if the random variable $Y$ and random vector $\bbx$ are independent, and equals $1$ if and only if $Y$ is almost surely a measurable function of $\bbx$,
    \item the limiting quantity admits a natural interpretation as a generalisation of the familiar partial $R^2$ statistic for quantifying the dependence of $Y$ on $\bbx$,
    \item moreover, it extends to a coefficient of conditional dependence of $Y$ on $\bbx$ given $\bbz$, with the corresponding limit lying in $[0,1]$, equalling $0$ if and only if $Y$ is conditionally independent of $\bbx$ given $\bbz$, and equalling $1$ if and only if $Y$ is almost surely a measurable function of $\bbx$ given $\bbz$, and
    \item all of the above hold without any structural assumptions on the joint distribution of the random variables.
\end{enumerate}
The second contribution of this paper is a variable selection algorithm that demonstrates the substantial performance gains of our proposed dependence measure over~\cite{chatterjee2021new, azadkia2021simple}. While our approach is motivated by the FOCI framework introduced in~\cite{azadkia2021simple}, it significantly outperforms FOCI in both detection power and selection accuracy. Our algorithm preserves the desirable properties of being model-free, tuning-free, and provably consistent under sparsity assumptions, while delivering markedly improved empirical performance.

Finally, we highlight that this newly introduced coefficient of dependence can be interpreted as a novel \emph{discrepancy measure} on the space of permutations.

The paper is organised as follows. Section~\ref{sec:def} introduces our new measure of dependence, compares it with the Dette--Siburg--Stoimenov~\cite{dette2013copula} coefficient, interprets it as a generalisation of the classical $R^2$ measure, and extends it to a measure of conditional dependence. Section~\ref{sec:estimator} presents our general estimator, describes a simplified one-dimensional version, and establishes its rate of convergence. Section~\ref{sec:var} develops variable selection via the FORD procedure and examines the performance of the resulting algorithm. Section~\ref{sec:permutations} introduces a permutation metric derived from the dependence measure. Section~\ref{sec:example} reports simulation results and empirical illustrations. Finally, Section~\ref{sec:proof} contains the proofs of the main theoretical results.

\section{A New Measure of Dependence}\label{sec:def}

Let $Y$ be a random variable and $\bbx = (X_1, \ldots, X_p)$ a random vector defined on the same probability space. For clarity, when $p = 1$, we denote the vector $\bbx$ simply by $X$. Let $\mu$ be the probability law of $Y$. Let $S\subseteq\rr$ be the support of $\mu$. If $S$ attains a maximum $s_{\max}$ let $\tilde{S} = S\setminus \{s_{\max}\}$ otherwise let $\tilde{S} = S$. We define a probability measure $\tilde{\mu}$ on $S$ where for any measurable set $A\subseteq S$, $\tilde{\mu}(A) = \mu(A\cap\tilde{S})/\mu(\tilde{S})$. We propose the following quantity as a measure of dependence of $Y$ on $\bbx$:
\begin{eqnarray}\label{eq:def_nu}
    \nu(Y, \bbx) &:=& \int\frac{\var(\ee[\bone\{Y >  t\}\mid \bbx])}{\var(\bone\{Y > t\})}d\tilde{\mu}(t),
\end{eqnarray}
where $\bone\{Y > t\}$ is the indicator of the event $\{Y > t\}$. We note that a symmetrized form of $\nu$ was previously mentioned in \cite{kong2019composite} for the special case of one--dimensional $Y$ and $X$ (see equation~2.6 in \cite{kong2019composite}). However, that work did not provide theoretical development of the measure nor an accompanying estimation methodology. Our contribution is hence to formalize this measure, establish its properties, and develop estimators that enable its application in practice.

Observe that $\nu$ is a deterministic quantity determined entirely by the joint distribution of $(Y, \bbx)$. Because taking conditional expectations cannot increase variance, we have
\begin{eqnarray*}
    \var(\ee[\bone\{Y > t\}\mid\bbx]) \leq \var(\bone\{Y > t\}),
\end{eqnarray*}
which guarantees that $\nu\in[0, 1]$. If $Y$ is almost surely a measurable function of $\bbx$, then for almost every $t$ we have $\var(\ee[\bone\{Y > t\}\mid\bbx]) = \var(\bone\{Y > t\})$ and thus $\nu = 1$. On the other hand, if $Y$ is independent of $\bbx$, then for almost every $t$ we have $\var(\ee[\bone\{Y > t\}\mid\bbx]) = 0$ and thus $\nu = 0$. We will show that the converses of these statements also hold. The following theorem summarizes the key properties of $\nu$.

\begin{thm}\label{thm:NuProperties}
    For random variables $Y$ and $\bbx$ such that $Y$ is not almost surely a constant, $\nu(Y, \bbx)$ belongs to the interval $[0, 1]$, it is $0$ if and only if $Y$ and $\bbx$ are independent, and it is $1$ if and only if there exists a measurable function $f:\rr^p\rightarrow\rr$ such that  $Y = f(\bbx)$ almost surely.
\end{thm}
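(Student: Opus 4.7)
We begin with the range: the bound $\nu \in [0,1]$ follows immediately from the law of total variance applied to $W_t := \bone\{Y > t\}$, which gives $\var(\ee[W_t \mid \bbx]) \leq \var(W_t)$ for every $t$, so the integrand in the definition of $\nu$ takes values in $[0,1]$ wherever the denominator is positive, which is the case $\tilde\mu$-almost everywhere. Since $\tilde\mu$ is a probability measure, $\nu \in [0,1]$ follows. The easy halves of the equivalences are also immediate: if $Y$ and $\bbx$ are independent, then $\ee[W_t \mid \bbx] = \ee[W_t]$ a.s.\ for every $t$, so the numerator vanishes and $\nu = 0$; and if $Y = f(\bbx)$ a.s.\ for some measurable $f$, then $W_t$ is $\sigma(\bbx)$-measurable, giving $\ee[W_t \mid \bbx] = W_t$ a.s., so the integrand equals $1$ for $\tilde\mu$-a.e.\ $t$ and $\nu = 1$.

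For the harder direction $\nu = 0 \Rightarrow Y \perp \bbx$, vanishing of $\nu$ forces $\var(\ee[W_t \mid \bbx]) = 0$ for $\tilde\mu$-a.e.\ $t$, equivalently $P(Y > t \mid \bbx) = P(Y > t)$ a.s.\ for every $t$ in a set $A$ with $\mu(\tilde S \setminus A) = 0$. Writing $F$ for the marginal CDF of $Y$ and $G(\cdot, \bbx)$ for a regular version of its conditional CDF given $\bbx$, we have $F(t) = G(t, \bbx)$ a.s.\ for every $t \in A$, and the plan is to extend this to every $t \in \mathbb{R}$, which then yields $Y \perp \bbx$. The extension rests on three ingredients. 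First, Fubini gives $\ee[P(Y \in S^c \mid \bbx)] = P(Y \in S^c) = 0$, so the conditional law of $Y$ given $\bbx$ is a.s.\ supported on $S$; hence both $F$ and $G(\cdot, \bbx)$ are constant on each connected component of $S^c$, they agree trivially at every $t \geq s_{\max}$ (both equal $1$) and at every $t < \inf S$ (both equal $0$), and $P(Y = t \mid \bbx) = 0$ a.s.\ whenever $P(Y = t) = 0$. Second, $A \cap \tilde S$ is topologically dense in $\tilde S$: any open $U$ meeting $\tilde S$ contains an open $V \subseteq U$ around a point of $U \cap \tilde S$ that avoids $s_{\max}$, and $\mu(V) > 0$ by definition of the support, giving $\mu(V \cap A \cap \tilde S) = \mu(V) > 0$. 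Third, every atom of $\mu$ in $\tilde S$ automatically lies in $A$ (since $A^c \cap \tilde S$ is $\mu$-null), while every non-atom point of $\tilde S$ is a left- or right-accumulation point of $\tilde S$ (otherwise a small punctured neighborhood would carry no $\mu$-mass away from $s_{\max}$, contradicting the support property). Combining density with right-continuity of both CDFs (and left-continuity at non-atoms) propagates $F = G(\cdot, \bbx)$ to every point of $\tilde S$, and gap-constancy then extends it to all of $\mathbb{R}$.

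For the direction $\nu = 1 \Rightarrow Y = f(\bbx)$ a.s., the integrand being at most $1$ and $\tilde\mu$-integrating to $1$ forces it to equal $1$ for $\tilde\mu$-a.e.\ $t$. The law of total variance then yields $\ee[\var(W_t \mid \bbx)] = 0$ for such $t$, so a measurable $g_t \colon \mathbb{R}^p \to \{0,1\}$ exists with $W_t = g_t(\bbx)$ a.s. Choose a countable set $D$ of such good $t$'s that is dense in $\tilde S$ and contains every atom of $\mu$ in $\tilde S$ (possible by the topological density argument above, together with the fact that atoms are automatically in $A$ and at most countable). On the full-probability event where $\bone\{Y > t\} = g_t(\bbx)$ for every $t \in D$ simultaneously, we set
\begin{equation*}
    f(\bbx) := \inf\{t \in D : g_t(\bbx) = 0\},
\end{equation*}
with the convention $\inf \emptyset := s_{\max}$ (or any value above $\sup S$ when no maximum exists). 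One verifies $f(\bbx) = Y$ almost surely by considering exhaustive cases for $Y$: an atom (in which case $Y \in D$, so the infimum is attained at $Y$); a non-atom right-accumulation point of $\tilde S$ (density of $D$ forces the infimum to be $Y$); or $Y = s_{\max}$ (the set is empty, and the convention returns $s_{\max}$); the only remaining case---non-atom, left- but not right-accumulation---contributes $\mu$-measure zero since such points form an at-most-countable set. The principal obstacle in both nontrivial directions is the passage from ``$\tilde\mu$-almost every $t$'' to ``every $t$''; the enabling fact is that a $\mu$-conull subset of $\tilde S$ is automatically topologically dense in $\tilde S$ once the point mass at $s_{\max}$ has been excised---precisely the reason for replacing $\mu$ by $\tilde\mu$ in the definition of $\nu$.
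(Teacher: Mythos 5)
Your proof is correct, and in both nontrivial directions it reaches the conclusion by a genuinely different route from the paper's. For $\nu=0\Rightarrow$ independence, the paper fixes $t$ and case-splits on whether the survival function $G(s)=\pp(Y>s)$ drops strictly for every $s>t$ or is constant on some $[t,s]$; in either case it produces points of the good set approaching $t$ from the right (or a single suitable $s$) and closes with the one-sided inequality $\pp(Y>t\mid\bbx)\ge\lim \pp(Y>r_n\mid\bbx)$ combined with $\ee[\pp(Y>t\mid\bbx)]=G(t)$, thereby never invoking left-continuity. You instead prove once that the good set is topologically dense in $\tilde{S}$, classify points of $\tilde{S}$ as atoms, right-accumulation points, or left-only accumulation points, and propagate the identity by right-continuity (respectively, no-atom left-continuity), finishing with constancy on the gaps of the support; the expectation trick in the paper is slightly slicker, but your density lemma is reused verbatim in the other implication, which is a structural gain. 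For $\nu=1\Rightarrow Y=f(\bbx)$, the constructions differ substantively: the paper defines $a_\bbx$ (the last value where the conditional survival function equals $1$) and $b_\bbx$ (the first where it equals $0$) and argues $a_\bbx=b_\bbx$ almost surely, whereas you build $f$ explicitly as $\inf\{t\in D: g_t(\bbx)=0\}$ over a countable dense set $D$ of good thresholds containing the atoms of $\tilde{S}$. Your version is more explicit about the measurability of $f$ and about exactly which values of $Y$ must be discarded (non-atoms of $\tilde{S}$ that are not right-accumulation points, a countable and hence null set), points the paper's shorter argument leaves implicit.
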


\begin{remark}\label{re:mutilde}
To explain the need for replacing $\mu$ with $\tilde{\mu}$, first note that no modification is required when $\mu$ is absolutely continuous; in that case $\tilde{\mu} = \mu$. The adjustment becomes necessary only when the support of $S$ has a maximum point $s_{\max}$ at which $\mu$ places positive mass. At such a point, the indicator $\bone\{Y > s_{\max}\}$ is identically zero, implying $\var(\bone\{Y > s_{\max}\}) = 0$. Since this indicator is a deterministic constant, it can be viewed either as independent of $\bbx$ or as trivially measurable with respect to $\bbx$. 

To ensure that $\nu$ reflects meaningful notions of dependence, it is therefore necessary to remove this degenerate threshold from consideration and focus on the portion of the support where variability---and hence dependence---is well defined. Because $\bone\{Y > s_{\max}\}$ exhibits no variation, it carries no information regarding the relationship between $Y$ and $\bbx$, and its influence should be excluded via the modified measure $\tilde{\mu}$.
\end{remark}

\subsection{Comparison to Dette-Siburg-Stoimenov}
\label{subsec:comparison_DSS}

For random variables $X$ and $Y$ with continuous marginal distributions, an early work in measuring dependence is~\cite{dette2013copula}, which defined the {\it Dette-Siburg-Stoimenov coefficient}, the association measure
\begin{align}\label{eq:hd1}
    \xi(X,Y) = 6\int_{[0,1]^2}\big ( \partial_1C(u,v)\big)^2 du dv - 2.
\end{align}
Here $C$ denotes the copula of the vector $(X,Y)$ and $\partial_1C$ its partial derivative with respect to the first coordinate. Later, in \cite{chatterjee2021new} the following measure was considered
\begin{align}\label{eq:hd2}
    \frac{\int\var(\ee[\bone(Y \geq t)\mid X]) d\mu(t)}{ \int\var(\bone(Y \geq t)) d\mu(t)},
\end{align}
with a corresponding estimator, meanwhile known as {\it Chatterjee's rank correlation}. It turns out that for continuous distributions the two measures \eqref{eq:hd1} and \eqref{eq:hd2} actually coincide. Later in \cite{azadkia2021simple} this measure was extended for multidimensional $\bbx$ as
\begin{eqnarray}\label{eq:T}
    T(Y, \bbx) = \frac{\int\var(\ee[\bone\{Y\geq t\}\mid \bbx])d\mu(t)}{\int\var(\bone\{Y\geq t\})d\mu(t)}.
\end{eqnarray}

To understand similarity and difference of $\nu$ and $T$, we consider the case where $Y$ and $\bbx$ have continuous density with no point mass. In this case we can write 
\begin{align*}
    \nu(Y, \bbx) = \int\frac{\var(\ee[\bone\{Y >  t\}\mid \bbx])}{\var(\bone\{Y > t\})}d\mu(t),\qquad T(Y, \bbx) = \int\frac{\var(\ee[\bone\{Y > t\}\mid \bbx])}{\int\var(\bone\{Y > t\})d\mu(t)} d\mu(t).
\end{align*}
We argue that $\nu$ is the more ``natural" dependence measure compared with $T$. Both quantities assess the strength of dependence of $Y$ on $\bbx$ by averaging the variability of the indicators $\bone\{Y > t\}$ conditional on $\bbx$ across all threshold values $t$. However, the two measures differ fundamentally in how this variability is normalized. The measure $\nu$ employs a local normalization: for each $t$, the quantity 
\begin{align*}
    \var(\ee[\bone\{Y > t\}\mid \bbx])
\end{align*}
is compared to the corresponding marginal variability $\var(\bone\{Y > t\})$, respecting the natural inequality
\begin{align*}
    \var(\ee[\bone\{Y > t\}\mid \bbx]) \leq \var(\bone\{Y > t\}).
\end{align*}
In contrast, $T$ uses a global normalization, dividing every term by the constant
\begin{align*}
    \int\var(\ee[\bone\{Y > t\}\mid \bbx])d\mu(t),
\end{align*}
regardless of the threshold under consideration.

This distinction has important consequences: under $T$, thresholds $t$ at which $\var(\bone\{Y > t\})$ is small receive the same normalization weight as thresholds where this variance is large. As a result, values of $t$ for which the indicator $\bone\{Y > t\}$ is nearly deterministic, e.g. tail values $t$, contribute little to the dependence measure, even if their conditional variability $\var(\ee[\bone\{Y > t\}\mid\bbx])$ indicates strong dependence on $\bbx$. 

By normalizing each term relative to its own marginal variability, $\nu$ appropriately highlights dependence even when $\var(\bone\{Y > t\})$ is close to zero. This local adaptivity makes $\nu$ conceptually more coherent and statistically more informative as a measure of dependence.

In Section~\ref{sec:example}, using both simulated and real data, we demonstrate that this distinction yields a substantial improvement in the performance of $\nu$ relative to $T$, particularly in the context of variable selection.

\subsection{Explained Variation Interpretation}\label{subsec:Rsquared}
The measure $\nu(Y, \bbx)$ admits a natural interpretation in terms of explained variation. Consider first the special case in which $Y$ is binary, taking values in $\{0,1\}$, so that $Y = \bone\{Y > 0\}$. By the law of total variance,
\[
    \nu(Y, \bbx)
    = \frac{\var(\ee[Y \mid \bbx])}{\var(Y)}
    = 1 - \frac{\ee[\var(Y \mid \bbx)]}{\var(Y)}.
\]
Thus, $\nu(Y, \bbx)$ coincides with the classical coefficient of determination $R^{2}_{Y,\bbx}$, representing the proportion of variance in $Y$ explained by $\bbx$.

For a general real-valued variable $Y$, define for each $t \in \rr$ the binary variable $Y_t := \bone\{Y > t\}$. Then
\[
    \nu(Y, \bbx)
    = \int \left( 1 - \frac{\ee[\var(Y_t \mid \bbx)]}{\var(Y_t)} \right) d\tilde{\mu}(t)
    = \int R^{2}_{Y_t,\bbx}\, d\tilde{\mu}(t).
\]
Hence, $\nu(Y, \bbx)$ can be viewed as an average, over all thresholds $t$, of the explained-variance coefficients $R^{2}_{Y_t,\bbx}$ with respect to the measure $\tilde{\mu}$. Since $Y$ can be represented as an integral (or linear combination) of the indicators $\{Y_t\}_{t \in \rr}$, the quantity $\nu(Y, \bbx)$ serves as a measure of the overall proportion of variation in $Y$ that is explainable by $\bbx$.
\subsection{Conditional Dependence}\label{sec:conditional}

From definition of $\nu(Y, \bbx)$ in~\eqref{eq:def_nu}, we extend $\nu$ to a measure that quantifies the conditional dependence of $Y$ on $\bbx$ given $\bbz$. Given $(Y, \bbx, \bbz)$, we define 
\begin{align}\label{eq:def_cond_nu}
    \nu(Y, \bbx\mid\bbz) := \frac{\nu(Y, (\bbx, \bbz)) - \nu(Y, \bbz)}{1 - \nu(Y, \bbz)}.
\end{align}
The following theorem establishes that $\nu(Y, \bbx\mid\bbz)$ is well-defined and satisfies the desired properties.
\begin{thm}\label{thm:conditional}
    Suppose that $Y$ is not almost surely equal to a measurable function of $\bbz$. Then $\nu(Y, \bbx\mid\bbz)$ is well-defined and belongs to $[0, 1]$. Moreover, $\nu=0$ if and only if $Y$ and $\bbx$ are conditionally independent given $\bbz$, and $\nu=1$ if and only if $Y$ is almost surely equal to a measurable function of $\bbx$ given $\bbz$. 
\end{thm}
We have shown in Section \ref{sec:example} how the estimated conditional dependence captures complex relationships.
\section{Estimator}\label{sec:estimator}
Having defined $\nu$, we now address the question of whether it can be efficiently estimated from data. We introduce the estimator $\nu_n(Y, \bbx)$ for $\nu(Y, \bbx)$ and study its statistical properties. Suppose we observe $(Y_1, \bbx_1), \ldots, (Y_n, \bbx_n)$, where $n \ge 3$ and the pairs are i.i.d.\ copies of $(Y, \bbx)$. For each $i$, let $R_i$ denote the rank of $Y_i$, defined by
\[
R_i = \sum_{j=1}^n \bone\{Y_j \le Y_i\}.
\]
For any distinct indices $i,j \in \{1,\ldots,n\}$, let $N^{-j}(i)$ be the index of the nearest neighbour of $\bbx_i$ (under the Euclidean metric on $\mathbb{R}^p$) among the points $\{\bbx_k : k \neq i, j\}$, with ties broken uniformly at random. Define
\[
    \mathcal{R}_i^j := \big[\min\{R_i, R_{N^{-j}(i)}\},\, \max\{R_i, R_{N^{-j}(i)}\}\big].
\]
Let
\[
    n_{\max} := \big|\{i : Y_i = \max_{j \in [n]} Y_j\}\big|
    \quad\text{and}\quad
    c_{\min} :=
    \begin{cases}
        1, & \text{if } \big|\{i : Y_i = \min_{j \in [n]} Y_j\}\big| = 1, \\
        0, & \text{otherwise}.
    \end{cases}
\]
Set $n_0 = n_{\max} + c_{\min}$.  
In the absence of ties among the $Y_j$'s, we have $n_{\max} = c_{\min} = 1$.  
When $n_0 < n$, we define the estimator $\nu_n$ by
\begin{equation}\label{eq:nun2}
    \nu_n(Y, \bbx)
    := 1
    - \frac{1}{2}\left(\frac{n - 1}{n - n_0}\right)
      \sum_{\substack{j :\, R_j \notin \{1, n\}}}
      \sum_{i \neq j}
      \frac{\bone\{R_j \in \mathcal{R}_i^j\}}{(R_j - 1)(n - R_j)}.
\end{equation}
If $n = n_0$, the data provide no information about variability in $Y$, and in this case we cannot construct an estimator for $\nu$.

The following theorem establishes that $\nu_n$ is a consistent estimator of $\nu$.

\begin{thm}\label{thm:consistency}
    If $Y$ is not almost surely constant, then $\nu_n$ converges almost surely to $\nu$ as $n \to \infty$.
\end{thm}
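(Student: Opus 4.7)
The plan is to adapt the consistency proof of the FOCI estimator in Azadkia-Chatterjee~\cite{azadkia2021simple} to accommodate the new denominator $(R_j-1)(n-R_j)$ and the restriction $R_j \in \{2, \ldots, n-1\}$. The overall strategy decomposes into two steps: first, establish $\ee[\nu_n] \to \nu$ via nearest-neighbor consistency and a coupling to conditionally i.i.d.\ copies; second, upgrade this to almost-sure convergence through an Efron-Stein variance bound combined with a Borel-Cantelli argument.

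For the expectation part, I would first assume $Y$ has a continuous law, so that ties vanish a.s., $n_0 = 2$, and $\tilde\mu = \mu$; the atom/tie case is patched at the end by checking that the factor $(n-1)/(n-n_0)$ exactly compensates for the excluded extremal $j$'s. The key identity, valid without ties, is
\begin{equation*}
\bone\{R_j \in \mathcal{R}_i^j\} = h_{Y_j}(Y_i) + h_{Y_j}(Y_{N^{-j}(i)}) - 2\,h_{Y_j}(Y_i)\,h_{Y_j}(Y_{N^{-j}(i)}), \qquad h_t(y):=\bone\{y > t\},
\end{equation*}
which rewrites each summand in terms of threshold indicators. Conditioning on $Y_j = y$ and invoking standard nearest-neighbor convergence in $\rr^p$ (which holds with no assumptions on the law of $\bbx$), the pair $(Y_i, Y_{N^{-j}(i)})$ behaves asymptotically like two conditionally i.i.d.\ copies drawn from $F(\cdot \mid \bbx_i)$, where $F(\cdot \mid \bbx)$ denotes the conditional CDF of $Y$. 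This yields
\begin{equation*}
\ee\!\left[\bone\{R_j \in \mathcal{R}_i^j\} \,\big|\, Y_j = y\right] \longrightarrow 2\,\ee\!\left[F(y \mid \bbx)\,(1-F(y \mid \bbx))\right].
\end{equation*}
Paired with $(R_j-1)(n-R_j)/(n-1)^2 \to F(y)(1-F(y))$ (Binomial LLN for the rank conditional on $Y_j = y$), the SLLN in $j$, and the identity $\ee[F(y \mid \bbx)(1-F(y \mid \bbx))] = F(y)(1-F(y)) - \var(F(y \mid \bbx))$, a short calculation delivers $\ee[\nu_n] \to \nu$.

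For the almost-sure statement I would apply the Efron-Stein inequality. Swapping one $(Y_i,\bbx_i)$ shifts ranks $R_j$ by $\pm 1$ at $O(1)$ indices and rewires the nearest-neighbor graph at only $O(1)$ vertices in expectation (a classical bound on the maximum in-degree of NN graphs in $\rr^p$). Each summand in~\eqref{eq:nun2} is bounded by $1/[(R_j-1)(n-R_j)] \le 1/(n-2)$ thanks to the exclusion $R_j \notin \{1,n\}$, so the total perturbation per swap is $O(1/n)$, yielding $\var(\nu_n) = O(1/n)$. Borel-Cantelli along a subsequence $n_k = \lceil k^{1+\delta}\rceil$, together with a monotonicity/interpolation estimate between subsequence points, then promotes $L^2$-convergence to the claimed almost-sure limit.

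The main obstacle will be the uniformity of the nearest-neighbor approximation across $j$: when $Y_j$ is close to the extremes of the support, $(R_j-1)(n-R_j)$ is only of order $n$ rather than $n^2$, which threatens to amplify the pointwise $o(1)$ errors. I plan to handle this by a truncation argument that splits the $j$-sum into indices with moderate ranks, say $\varepsilon n \le R_j \le (1-\varepsilon)n$, and a boundary layer whose contribution can be bounded using the elementary estimate $\sum_{r=2}^{n-1} 1/[(r-1)(n-r)] = O(\log n / n)$. The restriction $R_j \notin \{1,n\}$ and the replacement of $\mu$ by $\tilde\mu$ in the definition of $\nu$ are precisely tailored to make this bookkeeping tight enough to close the estimate.
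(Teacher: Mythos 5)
Your overall architecture matches the paper's: show $\ee[\nu_n]\to\nu$ via nearest-neighbour coupling to conditionally i.i.d.\ copies, then get almost-sure convergence from a concentration/variance bound plus Borel--Cantelli. However, two of your key steps have genuine gaps.

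First, the bounded-difference accounting behind your Efron--Stein bound is wrong. You claim that swapping one $(Y_k,\bbx_k)$ "shifts ranks $R_j$ by $\pm 1$ at $O(1)$ indices." In fact, replacing $Y_k$ by $Y_k'$ shifts the rank of \emph{every} $Y_j$ lying between the old and new values, which can be $\Theta(n)$ indices; moreover, for each such $j$ the inner sum over $i$ can contain up to $O(n\min\{F_n(Y_j),1-F_n(Y_j)\})$ active indicators. The correct per-swap perturbation is $O(\log n/n)$, and establishing it requires summing the change in $1/[(R_j-1)(n-R_j)]$ over all affected $j$, weighted by the number of active intervals — this is precisely the delicate computation (the terms the paper calls $A_3$ and $A_4$) that your heuristic skips. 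The paper then uses McDiarmid to get exponential concentration, so Borel--Cantelli applies along the full sequence; your weaker $L^2$ route forces the subsequence-plus-interpolation step, and since $\nu_n$ is not monotone and adding observations rewires both the ranks and the nearest-neighbour graph, that interpolation is itself a nontrivial argument you have not supplied.

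Second, in the expectation step you correctly identify the danger near the extremes of the support but do not resolve it. Pointwise convergence of $(R_j-1)(n-R_j)/(n-1)^2$ to $F(y)(1-F(y))$ does not let you pass to the limit inside the expectation, because the ratio $\bone\{R_j\in\mathcal{R}_i^j\}/[F_{n,j}(Y_j)(1-F_{n,j}(Y_j))]$ is unbounded where $F(y)(1-F(y))$ is small. The paper devotes most of its proof to exactly this: a step replacing the empirical denominator by the true one using a sharp empirical-process deviation bound, and a four-case uniform-integrability argument (continuous law, atoms at the max, atoms at the min, neither) that also absorbs the tie/atom issue you defer. Your proposed truncation at $\varepsilon n\le R_j\le(1-\varepsilon)n$ with the estimate $\sum_r 1/[(r-1)(n-r)]=O(\log n/n)$ is the right flavour, but as written it is a plan rather than a proof, and the "SLLN in $j$" you invoke does not apply since the summands are strongly dependent across $j$.
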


We leverage $\nu_n$ in~\eqref{eq:nun2} to estimate the conditional quantity $\nu(Y, \bbx \mid \bbz)$ through a simple plug-in approach. Given a sample $(Y_1, \bbx_1, \bbz_1), \ldots, (Y_n, \bbx_n, \bbz_n)$, we estimate $\nu(Y, \bbx \mid \bbz)$ by
\begin{align*}
    \nu_n(Y, \bbx\mid\bbz) = \frac{\nu_n(Y, (\bbx, \bbz)) - \nu_n(Y, \bbz)}{1 - \nu_n(Y, \bbz)}.
\end{align*}

\begin{cor}\label{cor:conditional}
Suppose that $Y$ is not almost surely equal to a measurable function of $\bbz$, then as $n \rightarrow \infty, \nu_n(Y, \bbx\mid\bbz) \rightarrow \nu(Y, \bbx\mid\bbz)$ almost surely.
\end{cor}

\begin{remark}
\noindent (1) When $p$ is fixed, the statistic $\nu_n$ can be computed in $O(n \log n)$ time. Nearest neighbours may be identified in $O(n \log n)$ time \cite{friedman1977algorithm}, and the quantities $\bone\{Y_j \in \mathcal{R}_i^j\}$ together with the ranks $R_j$ can likewise be computed in $O(n \log n)$ time \cite{knuth1997art}. At first glance, \eqref{eq:simpleEst} appears to require a double loop over all $j$ and all intervals $\mathcal{R}_i^j$, suggesting a computational cost of order $O(n^2)$. However, the essential task reduces to counting how many integer intervals contain a given integer, which can be carried out in $O(n)$ time using a \emph{difference array method}.

\noindent (2) No assumptions are required on the joint distribution of $(Y, \bbx)$ beyond the non-degeneracy condition that $Y$ is not almost surely constant. This condition is essential: if $Y$ were almost surely constant, it would simultaneously be independent of $\bbx$ and a measurable function of $\bbx$, making it impossible for any dependence measure between $Y$ and $\bbx$ to be meaningfully defined.

\noindent (3) Although Theorem~\ref{thm:NuProperties} ensures that $\nu$ lies in the interval $[0,1]$ and Theorem~\ref{thm:consistency} establishes the almost sure convergence of $\nu_n$ to $\nu$, the finite-sample values of $\nu_n$ need not themselves be constrained to the interval $[0,1]$.

\noindent (4) The coefficient $\nu_n(Y, \bbx)$ is invariant under strictly increasing transformations of $Y$, as its construction depends solely on the ranks of the $Y_i$.

\noindent (5) We have developed an \texttt{R} package, \texttt{FORD}~\cite{azadkiafordpackage}, available on CRAN,\footnote{\url{https://cran.r-project.org/web/packages/FORD/index.html}} which provides functions for computing $\nu_n$ and for implementing the FORD variable selection procedure described in Section~\ref{sec:var}.
 
\noindent (6) Besides variable selection, another natural area of applications of our coefficient is graphical models; similar ideas as in \cite{azadkia2021fast, chatterjee2022estimating} are being investigated.

\noindent (7) If the $\bbx_i$'s contain ties, then $\nu_n(Y, \bbx)$ becomes a randomized estimate of $\nu(Y, \bbx)$ due to the randomness introduced by tie-breaking. While this effect diminishes as $n$ grows large, a more robust estimate can be obtained by averaging $\nu_n$ over all possible tie-breaking configurations.

\noindent (8) Note that $\nu_n$ is based on nearest neighbour graphs and, as a result, generally lacks scale invariance; that is, changes in the scale of certain covariates can significantly alter the graph structure. To address this issue, a rank-based variant, similar to that proposed in~\cite{tran2024rank}, can be considered.

\noindent (9) Note that $\nu(Y, X)$ is not symmetric in $Y$ and $X$. This asymmetry is intentional, as our objective is often to assess whether $Y$ depends on $X$, rather than merely whether one variable is a function of the other. If a symmetric measure of dependence is desired, it can be obtained by taking $\max\{\nu(Y, X),\, \nu(X, Y)\}$.
\end{remark}

\subsection{A simpler estimator for one-dimensional case}
\label{subsubsec:simpler_estim}

Consider the case where $p = 1$, so that $\bbx$ is a univariate random variable. To emphasize this, we write $\bbx$ as $X$ throughout this section. Following \cite{chatterjee2021new}, we introduce a related estimator which takes advantage of the canonical ordering on $\rr$. Let $(Y_1, X_1), \ldots, (Y_n, X_n)$ be i.i.d.\ samples from the distribution of $(Y, X)$, with $n \ge 2$. Rearrange the data as $(Y_{(1)}, X_{(1)}), \ldots, (Y_{(n)}, X_{(n)})$ such that
\[
    X_{(1)} \le \cdots \le X_{(n)}.
\]
If the $X_i$'s are distinct, this ordering is unique; if ties occur, we select an ordering uniformly at random among all permutations that preserve monotonicity. For each $i$, let $r_i = R_{(i)}$ denote the rank of $Y_{(i)}$. Define the interval
\[
    \mathcal{K}_i := \big[\min\{r_i, r_{i+1}\},\, \max\{r_i, r_{i+1}\}\big].
\]
Define
\begin{equation}\label{eq:simpleEst}
    \nu_{n}^{\text{1-dim}}(Y, X)
    := 1 - \frac{1}{2}\left(\frac{n - 1}{n - n_0}\right)
    \sum_{\substack{j \\ r_j \notin \{1, n\}}}
    \sum_{\substack{i \neq j,\, j-1,\, n}}
    \frac{\bone\{r_j \in \mathcal{K}_i\}}{(r_j - 1)(n - r_j)}.
\end{equation}

The following theorem establishes that $\nu_n$ is a consistent estimator of $\nu_{n}^{\text{1-dim}}$. 
\begin{thm}\label{thm:simpleConsistency}
    Let $X$ and $Y$ be random variables. If $Y$ is not almost surely constant, then $\nu_{n}^{\text{1-dim}}$ converges almost surely to $\nu$ as $n \to \infty$.
\end{thm}

Having established consistency of $\nu_n$ and $\nu_{n}^{\text{1-dim}}$, we next examine their behaviour under the null hypothesis of independence. The following propositions derive the expectation and asymptotic variance of these estimators under independence.

\begin{prop}\label{prop:momentsNun}
    Suppose that $\bbx$ and $Y$ are independent and both have continuous distributions, then
    \begin{align*}
        \ee[\nu_n(Y, \bbx)] = \frac{-1}{n-2}, \qquad \var(\nu_n(Y, \bbx)) = O(\frac{1}{n}).
    \end{align*}
\end{prop}

\begin{prop}\label{thm:nullSimpleAsymp} Suppose that $X$ and $Y$ are independent and both have continuous distributions, then
\begin{eqnarray*}
\ee[\nu_{n}^{\text{1-dim}}(Y, X)] = 2/n, \qquad \lim_{n\rightarrow\infty}n\var(\nu_{n}^{\text{1-dim}}(Y, X)) = \pi^2/3 - 3.
\end{eqnarray*}
\end{prop}

We conjecture that, under independence, both $\sqrt{n}\,\nu_n$ and $\sqrt{n}\,\nu_{n}^{\text{1-dim}}$ satisfy a central limit theorem. At present, however, we do not know how to establish these results. A key requirement is the variance scaling $\var(\nu_n(Y,\mathbf{X}))=\Theta(1/n)$, which is supported by simulations, although deriving it analytically appears to be cumbersome.

Proving a CLT in this setting is technically challenging, even for $\nu_{n}^{\text{1-dim}}$, which in principle should be more tractable because the problem reduces to statistics on random permutations (see Section~\ref{sec:permutations}). Existing techniques for permutation statistics, such as those in ~\cite{hoeffding1951combinatorial, chatterjee2017central}, or for stabilizing functionals~\cite{penrose2001central}, do not appear applicable, since $\nu_n$ and $\nu_{n}^{\text{1-dim}}$ depend not only on the relative ordering of ranks but also on their positional values, which means the effect of replacing a sample point by an independent copy does not remain local. We have also explored a martingale CLT approach, but the required second-moment calculations do not seem to yield tractable expressions.

Although we are unable to prove a central limit theorem for $\nu_n$ and $\nu_{n}^{\text{1-dim}}$ under independence, we can nevertheless describe their finite-sample behaviour through a non-asymptotic concentration bound. Remarkably, this bound holds for arbitrary $(Y, \bbx)$, not only under independence. The corresponding result is stated below.
\begin{thm}\label{thm:concentration}
    There are constants $C_1$ and $C_2$ such that 
    \begin{eqnarray*}
        \pp(\abs{\nu_n - \ee[\nu_n]}\geq \varepsilon) \leq C_1e^{-C_2n\varepsilon^2/\log^2 n}, \qquad\pp(\abs{\nu_{n}^{\text{1-dim}} - \ee[\nu_{n}^{\text{1-dim}}]}\geq \varepsilon) \leq C_1e^{-C_2n\varepsilon^2/\log^2 n}.
    \end{eqnarray*}
\end{thm}

\subsection{Rate of Convergence}\label{subsec:rate}
To obtain a convergence rate for $\nu_n$ to $\nu$, we must impose certain assumptions on the distribution of $(Y, \bbx)$. Without such assumptions, the convergence can, in principle, be arbitrarily slow. The primary challenge lies in controlling the sensitivity of the conditional distribution of $Y$ given $\bbx$ with respect to variations in $\bbx$, which is addressed by the first assumption below. The second assumption is introduced for technical convenience.

    \begin{itemize}[leftmargin=4em]
        \item[(A1)] There are nonnegative real numbers $\beta$ and $C$ such that for any $t \in \rr$, $\bx, \bx^{\prime} \in \rr^p$,
        \begin{align*}
            \lefteqn{\abs{\pp\left(Y \leq t \mid \bbx=\bx\right)-\pp\left(Y \leq t \mid \bbx=\bx^{\prime}\right)}\leq }\\
            && \qquad\qquad C\left(1+\|\bx\|^\beta+\|\bx^{\prime}\|^\beta\right)\|\bx-\bx^{\prime}\|\min\{F(t), 1 - F(t)\}.
        \end{align*}

        \item[(A2)] There exists a constant $K > 0$ such that $\pp(\|\bbx\| \leq K) = 1$; that is, $\bbx$ has bounded support.
    \end{itemize}
Assumption (A1) implies that the conditional distribution function
\[
t \mapsto \pp\bigl(Y \le t \,\big\vert\, \bbx = \bx\bigr)
\]
is locally Lipschitz in $\bx$, with a Lipschitz constant that may grow at most polynomially in $\|\bx\|$ and $\|\bx'\|$. Because the bound in (A1) is multiplied by $\min\{F(t), 1 - F(t)\}$, the Lipschitz requirement becomes stricter for tail values of $Y$.

Under Assumptions (A1) and (A2), the following theorem shows that $\nu_n$ converges to $\nu$ at the rate $n^{-1/(p\vee 2)}$, up to a logarithmic factor.
\begin{thm}\label{thm:rate}
Suppose that $p \geq 1$, and assumptions \textnormal{(A1)} and \textnormal{(A2)} holds for some $C$, $\beta$, and $K$. Then as $n\rightarrow\infty$
    \begin{eqnarray*}
        \nu_n - \nu = O_\pp\left(\frac{(\log n)^{1 + \bone\{p = 1\}}}{n^{1/(p\vee 2)}}\right).
    \end{eqnarray*}
\end{thm}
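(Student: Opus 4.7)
The strategy is the standard bias/fluctuation decomposition
\begin{equation*}
\nu_n - \nu \;=\; \bigl(\nu_n - \ee[\nu_n]\bigr) \;+\; \bigl(\ee[\nu_n] - \nu\bigr),
\end{equation*}
with the bias contributing the $n^{-1/p}$ piece (improving to $n^{-1}$ when $p=1$, up to logarithms) and the fluctuation contributing the $n^{-1/2}$ piece. Taking the maximum gives the rate $n^{-1/(p\vee 2)}$; the logarithmic factors enter through high-probability nearest-neighbour bounds and tail-truncation arguments.

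\emph{Bias.} I would pass from the rank-based form \eqref{eq:nun2} to a continuous formulation. The indicator $\bone\{R_j\in\mathcal{R}_i^j\}$ is the indicator that $Y_j$ lies between $Y_i$ and $Y_{N^{-j}(i)}$, and $(R_j-1)/(n-1)$, $(n-R_j)/(n-1)$ are empirical surrogates for $F(Y_j)$ and $1-F(Y_j)$. Using these identifications, $\ee[\nu_n]$ should be close to
\begin{equation*}
\int \frac{\var\bigl(\pp(Y>t\mid \bbx_1,\bbx_{N(1)})\bigr)}{F(t)(1-F(t))}\,d\tilde\mu(t),
\end{equation*}
where $N(1)$ denotes the NN index of $\bbx_1$ in the sample. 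Comparing this to $\nu$ amounts to replacing $\bbx_{N(1)}$ by an independent copy of $\bbx_1$; Assumption (A1) bounds the discrepancy in the conditional distribution function by $C\bigl(1+\|\bbx_1\|^\beta+\|\bbx_{N(1)}\|^\beta\bigr)\,\|\bbx_{N(1)}-\bbx_1\|\,\min\{F(t),1-F(t)\}$. The factor $\min\{F(t),1-F(t)\}$ is crucial: it is precisely what absorbs the $1/[F(t)(1-F(t))]$ weight in the integrand and keeps the bound integrable near the endpoints of $\tilde S$. Under (A2) the polynomial factor is bounded, so the bias reduces to $O\bigl(\ee[\|\bbx_{N(1)}-\bbx_1\|]\bigr)$, which by standard NN results on a bounded support is $O(n^{-1/p})$ for $p\ge 2$ and $O(n^{-1})$ for $p=1$, each with a $\log n$ correction in the uniform version.

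\emph{Fluctuation.} I would apply a bounded-differences inequality (McDiarmid or Efron–Stein) to $\nu_n$ viewed as a symmetric function of $(Y_1,\bbx_1),\dots,(Y_n,\bbx_n)$. Two ingredients are needed: first, the total weight $\sum_j 1/[(R_j-1)(n-R_j)]$ is $O((\log n)/n)$; second, a classical geometric fact states that in $\rr^p$ each point is the nearest neighbour of at most a dimension-dependent constant $C_p$ other points, so replacing one sample alters only $O(1)$ of the NN assignments $N^{-j}(i)$. Combining these two observations, each bounded-differences coefficient is $O((\log n)/n)$, and McDiarmid then yields $|\nu_n-\ee[\nu_n]|=O_\pp(n^{-1/2}\log n)$.

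\emph{Main obstacle.} The principal difficulty is the singularity of the denominator $(R_j-1)(n-R_j)$ at extreme ranks. Both the bias and the fluctuation analyses must lean on the $\min\{F(t),1-F(t)\}$ factor in (A1) to tame this singularity, and must simultaneously control the deviation of the empirical quantile $R_j/n$ from $F(Y_j)$, uniformly in $j$. A Dvoretzky–Kiefer–Wolfowitz type bound provides such control at cost $\sqrt{\log n/n}$ and is the natural source of the extra logarithmic factor in the $p=1$ regime, where the bias is already small enough that the concentration step dictates the sharp rate. Tie-breaking and the normalizing factor $(n-1)/(n-n_0)$ contribute only lower-order corrections and can be absorbed once the two main terms above are under control.
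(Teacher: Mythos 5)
Your proposal is correct and follows essentially the same route as the paper: the bias is controlled by combining Assumption (A1)'s Lipschitz bound (whose $\min\{F,1-F\}$ factor cancels the singular weight) with the nearest-neighbour distance bound $\ee\|\bbx_1-\bbx_{N(1)}\|=O(n^{-1/p}\log n)$ (Lemma~\ref{lmm:nn}), the empirical-versus-true CDF denominators are reconciled by a DKW/variance-type bound costing $O(\sqrt{\log n/n})$, and the fluctuation is handled by McDiarmid with per-coordinate differences of order $(\log n)/n$, using exactly the two ingredients you name (the harmonic bound on the rank weights and the constant $C_p$ bounding how many points can have a given point as nearest neighbour). The only cosmetic difference is that your attribution of the extra logarithm in the $p=1$ case to the DKW step is not quite how the factors fall out in the paper, but this does not affect the validity of the argument.
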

Assumption~\textnormal{(A2)} simply requires that $\bbx$ have bounded support. In contrast, it may be less transparent when Assumption~\textnormal{(A1)} holds. Consider, for example, a generating model of the form
\[
    Y = m(\bbx) + s(\bbx)\varepsilon,
\]
where $m(\cdot)$ is a Lipschitz function, $s(\bx) \ge c > 0$ for all $\bx$ and some constant $c$, and $\varepsilon$ is independent of $\bbx$ with density $f_\varepsilon$ and distribution function $F_\varepsilon$. Suppose that
\begin{align}\label{eq:epscond}
    \frac{f_\varepsilon(t)}{\min\{F_\varepsilon(t),\, 1 - F_\varepsilon(t)\}}
\end{align}
is bounded on $\rr$. For such distributions, Assumption~\textnormal{(A1)} is satisfied. This setting includes many commonly used models, such as linear regression, additive models, and heteroskedastic regression.

Note that the ratio in~\eqref{eq:epscond} is bounded if and only if both $f_\varepsilon(t)/(1 - F_\varepsilon(t))$ and $f_\varepsilon(t)/F_\varepsilon(t)$ are bounded. These quantities are known respectively as the \emph{hazard ratio} and the \emph{reverse hazard ratio}. For example, both ratios are bounded for the Laplace, chi-squared, and Student's $t$ distributions (with degrees of freedom greater than one).

More broadly, the next result demonstrates that condition \textnormal{(A1)} holds for many densities with suitable regularity and decay.

\begin{prop}\label{prop:reg}
Assume $Y$ has a strictly positive, continuously differentiable density $f$, and for each $\bx$, the conditional density $f_{Y \mid \bbx = \bx}$ exists and is continuously differentiable in $\bx$. Moreover, there exist $\beta \geq 0$ and $K_1<\infty$ such that
\begin{align}\label{eq:cond2}
\left\|\nabla_\bx f_{Y \mid \bbx = \bx}(t)\right\| \leq K_1\left(1+\|\bx\|^\beta\right) f(t) \quad \text { for all } x \in \rr^p, t \in \rr .
\end{align}
Then there exists a constant $C<\infty$ (depending only on $K_0, K_1$) and the same $\beta$ such that for all $t \in \rr$ and all $\bx, \bx^{\prime} \in \rr^p$ that satisfies (A1).
\end{prop}

\section{Variable Selection: Feature Ordering by Dependence}\label{sec:var}
Many commonly used variable selection methods in the statistics literature are based on linear or additive models. This includes several classical approaches~\cite{breiman1995better, chen1994basis, efron2004least, friedman1991multivariate, george1993variable, hastie2009elements, miller2002subset, tibshirani1996regression} as well as modern
ones~\cite{candes2007dantzig, fan2001variable, ravikumar2009sparse, yuan2006model, zou2006adaptive, zou2005regularization}, which are both powerful and widely adopted in practice. However, these methods can struggle when interaction effects or nonlinear relationships are present. 

Such problems can sometimes be overcome by model-free methods ~\cite{amit1997shape, battiti1994using, breiman1996bagging, breiman2001random, breiman2017classification, candes2018panning, freund1996experiments, hastie2009elements, ho1998random, vergara2014review}. These, too, are powerful and widely used techniques, and they perform better than model-based methods if interactions are present. On the flip side, their theoretical foundations are usually weaker than those of model-based methods. 

A related yet distinct direction focuses on handling ultra-high-dimensional settings through screening procedures, most notably the Sure Independence Screening (SIS) framework and its variants~\cite{Fan2008SureIndependenceScreening, Li2012DistanceCorrelation, Balasubramanian2013HSICSIS, Zhou2018BKR, Xu2020PCor, Pan2019SureIndependenceScreening}. These methods evaluate each covariate’s marginal relationship with the response and use this information to preselect a manageable subset of variables before applying more sophisticated model-based or model-free selection techniques. Importantly, the objective of SIS is not to isolate a minimal sufficient set of predictors, but rather to retain—with high probability—the truly relevant variables within a reduced but still relatively large pool. Although SIS procedures offer strong scalability and well-established screening guarantees in ultra-high-dimensional regimes, their reliance on marginal associations can limit their effectiveness when relevant and irrelevant variables are correlated or when the underlying signal arises predominantly from joint rather than marginal effects.

In this section, we propose a new variable selection algorithm for multivariate regression using a forward stepwise algorithm based on $\nu$. Our algorithm in nature follows precisely the idea of FOCI~\cite{azadkia2021simple} for multivariate regression. We call our method \textit{Feature Ordering by Integrated $R^2$ Dependence} (FORD).

The method is as follows. Let $Y$ be the response variable and let $\bbx = (X_1, \ldots, X_p)$ be the set of predictors. The data consists of $n$ i.i.d. copies of $(Y, \bbx)$. First, choose $j_1$ to be the index $j$ that maximizes $\nu_n(Y, X_j)$. If $\nu_n(Y, X_{j_1}) \leq 0$, declare $\hat{V}$ to be empty set and terminate the process. Otherwise, having obtained $j_1, \ldots, j_k$, we select $j_{k+1}$ as the index $j \notin \{j_1, \ldots, j_k\}$ that maximizes $\nu_n\!\left(Y, (X_{j_1}, \ldots, X_{j_k}, X_j)\right)$ (equivalently, the index that maximizes $\nu_n\!\left(Y, X_j \mid X_{j_1}, \ldots, X_{j_k}\right)$).
Continue like this until arriving at the first $k$ such that 
\begin{align}\label{eq:stoppingRule}
    \nu_n(Y, (X_{j_1}, \ldots, X_{j_k}, X_{j_{k+1}})) \leq \nu_n(Y, (X_{j_1}, \ldots, X_{j_k})),
\end{align}
which is equivalent to $\nu_n(Y, X_{j_{k+1}}\mid (X_{j_1}, \ldots, X_{j_k})) \leq 0$, and then declare the chosen subset to be $\hat{V} := \left\{j_1, \ldots, j_k\right\}$. If there is no such $k$, define $\hat{V}$ as the whole set of variables.

Note that the algorithm closely follows the setup of FOCI~\cite{azadkia2021simple}  by replacing $T_n$ in FOCI by $\nu_n$. Several extensions of FOCI have since been proposed. For example, KFOCI~\cite{huang2022kernel} incorporates kernel-based methods to estimate conditional dependence, ~\cite{pavasovic2025differentiable} introduce a parametric, differentiable approximation of the same conditional dependence measure, which is used to evaluate feature importance in neural networks. Some other model-agnostic variable important scores are~\cite{zhang2020floodgate, wang2023total, huang2025factor}.

\begin{remark}\label{remark:stoppingRule}
    The stopping criterion in~\eqref{eq:stoppingRule} may at first seem counterintuitive. In principle, for any random variable $Y$ and random vectors $\bbx$ and $\mathbf{Z}$, we have
    \begin{equation}\label{eq:monotone}
        \nu(Y, \bbx) \leq \nu\big(Y, (\bbx, \mathbf{Z})\big).
    \end{equation}
    One might therefore anticipate an inequality in the opposite direction. The key point, however, is that~\eqref{eq:stoppingRule} is expressed in terms of the sample-based estimator $\nu_n$, not the population-level quantity $\nu$. Sampling variability and estimation error can cause $\nu_n$ to deviate from its population analogue, and the monotonicity property need not hold for the estimator. Moreover, when $Y$ is conditionally independent of $\bbz$ given $\bbx$, we have $\nu(Y, \bbx\mid\bbz) = 0$ which is equivalent to $\nu(Y, \bbx) = \nu(Y, (\bbx, \bbz))$, in which case adding $\bbz$ should not increase the measure. 
    Criterion~\eqref{eq:stoppingRule} is designed to detect precisely this situation by halting when the inclusion of additional variables fails to yield an increase in $\nu_n$, indicating that the currently selected variables already capture the relevant dependence structure.
\end{remark}

\subsection{Efficacy of FORD}
\label{subsec:efficacy}
Let $(Y, \bbx)$ be as defined in the previous section. For any subset of indices $V \subseteq \{1, \ldots, p\}$, define $\bbx_V := (X_j)_{j \in V}$ and let $V^c := \{1, \ldots, p\} \setminus V$. A subset $V$ is said to be \emph{sufficient}~\cite{vergara2014review} if $Y$ and $\bbx_{V^c}$ are conditionally independent given $\bbx_V$. This definition allows for the possibility that $V$ is the empty set, in which case it simply implies that $Y$ and $\bbx$ are independent.

We will prove later that $\nu(Y, \bbx_{V^{\prime}}) \geq \nu(Y, \bbx_V)$ whenever $V^{\prime} \supseteq V$, with equality if and only $Y$ and $\bbx_{V^{\prime} \backslash V}$ are conditionally independent given $\bbx_V$. Thus if $V^{\prime} \supseteq V$, the difference $\nu\left(Y, \bbx_{V^{\prime}}\right) - \nu(Y, \bbx_V)$ is a measure of how much extra predictive power is added by appending $\bbx_{V^{\prime} \backslash V}$ to the set of predictors $\bbx_V$.

Let $\delta$ be the largest constant such that for every insufficient subset $V \subseteq \{1, \ldots, p\}$, there exists some index $j \notin V$ satisfying
\begin{align}\label{eq:delta}
    \nu(Y, \bbx_{V \cup \{j\}}) \geq \nu(Y, \bbx_V) + \delta.
\end{align}
In other words, if $V$ is insufficient, then appending at least one variable $\bbx_j$ with $j \notin V$ to the set $\bbx_V$ increases the dependence with $Y$ by at least $\delta$. The main result of this section, stated below, shows that if $\delta$ is bounded away from zero, then under certain regularity conditions on the distribution of $(Y, \bbx)$, the subset selected by FORD is sufficient with high probability.

It is worth noting that the assumption that $\delta$ is not too small implicitly encodes a sparsity condition: by definition, $\delta$ guarantees the existence of a sufficient subset of size at most $1/\delta$.

To demonstrate the efficacy of our method, we need the following two technical assumptions on the joint distribution of $(Y, \bbx)$. They are generalisations of the assumptions (A1) and (A2) from Subsection~\ref{subsec:rate}.
\begin{itemize}[leftmargin=4em]
\item[(A1$^\prime$)] There are nonnegative real numbers $\beta$ and $C$ such that for any set $V \subseteq\{1, \ldots, p\}$ of size $\leq 1 / \delta+2$, any $\bx, \bx^{\prime} \in \rr^V$ and any $t \in \rr$,
\begin{align*}
    \lefteqn{\abs{\pp\left(Y \leq t \mid \bbx_V=\bx\right)-\pp\left(Y \leq t \mid \bbx_V=\bx^{\prime}\right)}\leq }\\
    &\qquad\qquad\qquad C\left(1+\|\bx\|^\beta+\|\bx^{\prime}\|^\beta\right)\|\bx-\bx^{\prime}\|\min\{F(t), 1 - F(t)\}.
\end{align*}

\item[(A2$^\prime$)] There exists a constant $K > 0$ such that for any subset $V \subseteq \{1, \ldots, p\}$ with cardinality at most $1/\delta + 2$, we have $\pp\left(\|\bbx_V\| \le K\right) = 1$; that is, $\bbx_V$ has bounded support.

\end{itemize}
\begin{thm}\label{thm:FOCI}
    Suppose that $\delta > 0$, and that the assumptions \textnormal{(A1$^\prime$)} and \textnormal{(A2$^\prime$)} hold. Let $\hat{V}$ be the subset selected by FORD with a sample of size $n$. There are positive real numbers $L_1, L_2$ and $L_3$ depending only on $C, \beta, K$, and $\delta$ such that $\pp(\hat{V}$ is sufficient $) \geq 1-L_1 p^{L_2} e^{-L_3 n}$.
\end{thm}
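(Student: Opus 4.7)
The plan is to mirror the consistency proof of FOCI in~\cite{azadkia2021simple}, adapted to the estimator $\nu_n$ of this paper. The argument consists of three ingredients: (i) a deterministic population-level monotonicity of $\nu$ combined with the gap encoded in $\delta$, (ii) a uniform exponential concentration inequality for $\nu_n$ over subsets of bounded size, and (iii) a short deterministic argument combining (i) and (ii) to conclude sufficiency of the output.

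First, I would verify the monotonicity claim asserted before the theorem: for $V' \supseteq V$, $\nu(Y, \bbx_{V'}) \geq \nu(Y, \bbx_V)$, with equality if and only if $Y \perp \bbx_{V' \setminus V} \mid \bbx_V$. This follows pointwise in $t$ from the tower property applied to $\var(\ee[\bone\{Y > t\} \mid \bbx_{V'}])$, together with the equality case of the conditional variance decomposition; integrating against $\tilde{\mu}$ preserves the characterization. Combined with the definition of $\delta$ and $\nu \leq 1$, iterating the $\delta$-gain from $\emptyset$ already shows the existence of a sufficient subset of size at most $1/\delta$.

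The main technical step is the exponential concentration inequality
\[
\pp\bigl(|\nu_n(Y, \bbx_V) - \nu(Y, \bbx_V)| > \epsilon\bigr) \leq c_1 \exp(-c_2 n \epsilon^2)
\]
valid for every fixed $V$ with $|V| \leq 1/\delta + 2$, with $c_1, c_2 > 0$ depending only on $C, \beta, K, \delta$. The polynomial-in-$n$ rate of Theorem~\ref{thm:rate} does not suffice for a union bound, so a stand-alone argument is required. I would split the deviation into a bias term, controlled via (A1$'$) through the Lipschitz/tail bound on the conditional CDF together with standard nearest-neighbor distance estimates under (A2$'$), and a fluctuation term, obtained by writing $\nu_n$ as a symmetric function of the $n$ observations whose value changes by $O(1/n)$ when one data point is replaced (each observation belongs to a bounded number of nearest-neighbor edges when $\bbx$ has bounded support), then applying a McDiarmid/Efron-Stein-Steele bounded-differences inequality. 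This is the main obstacle: the denominators $(R_j - 1)(n - R_j)$ in $\nu_n$ must be handled with care (which is why the estimator excludes the extreme ranks through $n_0$), and the bounded-differences input must exploit the nearest-neighbor graph geometry under (A2$'$).

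Finally, a union bound over the at most $(p+1)^{1/\delta + 2}$ subsets of size $\leq 1/\delta + 2$ promotes this to a uniform deviation bound, producing the factor $p^{L_2}$ with $L_2 = O(1/\delta)$. Choose $\epsilon$ to be a sufficiently small multiple of $\delta$ (small enough that $\delta - 4\epsilon > 0$ and $\lceil 1/(\delta - 4\epsilon) \rceil \leq 1/\delta + 1$), and let $\mathcal{E}$ be the resulting good event. On $\mathcal{E}$, whenever the current FORD iterate $\hat{V}_k$ is insufficient with $|\hat{V}_k| \leq 1/\delta + 1$, the $\delta$-gap furnishes some $j \notin \hat{V}_k$ whose $\nu_n$-gain is at least $\delta - 2\epsilon > 0$, so the greedy step cannot terminate and its $\nu$-gain is at least $\delta - 4\epsilon > 0$. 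Iterating this lower bound against $\nu \leq 1$ forces the path to reach a sufficient subset within $1/\delta + 1$ steps, keeping the analysis inside the regime covered by $\mathcal{E}$. Since any superset of a sufficient subset is sufficient, the output $\hat{V}$ is sufficient on $\mathcal{E}$, which yields the stated bound $\pp(\hat{V}\text{ is sufficient}) \geq 1 - L_1 p^{L_2} e^{-L_3 n}$.
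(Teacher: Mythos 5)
Your proposal follows essentially the same route as the paper: a population-level gap argument driven by the definition of $\delta$, a uniform exponential concentration bound for $\nu_n(Y,\bbx_V)$ over all subsets of size at most $1/\delta+2$ obtained by splitting into a nearest-neighbour bias term and a bounded-differences fluctuation term, a union bound producing the $p^{L_2}$ factor, and a deterministic greedy argument on the good event. The paper packages the last step as Lemmas~\ref{lmm:Eprime}--\ref{lmm:EprimeSufficient} (arguing by contradiction that the cumulative gain would exceed $1$), which is logically equivalent to your forward iteration of the $(\delta-4\epsilon)$-gain; both correctly use that any superset of a sufficient set is sufficient.

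The one place where your proposal overstates what can be proved is the claim that $\nu_n$ changes by $O(1/n)$ when a single observation is replaced. Because of the rank-dependent weights $1/\bigl((R_j-1)(n-R_j)\bigr)$, moving one $Y$-value across many ranks perturbs all the intermediate denominators, and the paper's careful accounting in Lemma~\ref{lmm2} only yields a bounded difference of order $\log n/n$, hence a concentration bound of the form $C_1 e^{-C_2 n t^2/\log^2 n}$ rather than $e^{-c_2 n\epsilon^2}$. Consequently the good event in the paper (Lemma~\ref{lmm:rateEprime}) has probability $1-L_1 p^{L_2}e^{-L_3 n/\log n}$, not $1-L_1p^{L_2}e^{-L_3 n}$; the theorem as stated is in fact slightly stronger than what the paper's own intermediate lemmas deliver. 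So your architecture is sound, but if you want the clean $e^{-L_3 n}$ exponent you would need to genuinely justify the $O(1/n)$ bounded-difference claim, which the structure of the estimator does not obviously permit; otherwise you should accept the logarithmic loss in the exponent, as the paper implicitly does.
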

Theorem~\ref{thm:FOCI} demonstrates that FORD, like FOCI~\cite{azadkia2021simple}, differs from many traditional variable selection methods in that it is not only model-free but also incorporates a principled stopping rule and provides a theoretical guarantee that the selected subset is sufficient with high probability. A closely related approach in the literature is the mutual information-based method proposed by~\cite{battiti1994using}; however, in contrast to FOCI and FORD, it does not include a well-defined stopping criterion.

To clarify the role of quantity $\delta$ defined in~\eqref{eq:delta}, let us consider the classic example of linear regression with normally distributed predictor variables. Suppose that $\bbx$ is a normal random vector with zero mean and some arbitrary covariance matrix, and that
\[
    Y=\beta  \bbx + \varepsilon,
\]
where $\beta \in \rr^p$ is a vector of coefficients and $\varepsilon \sim N\left(0, \sigma^2\right)$ is independent of $\bbx$, with nonzero $\sigma$. Then $Y$ is also a normal random variable with mean zero. Let $\tau^2 := \var(Y)$. Let $\delta$ be the quantity defined in \eqref{eq:delta}, for this $Y$ and $\bbx$.

For any nonempty $S \subset\{1, \ldots, p\}$ and any $j \in\{1, \ldots, p\} \backslash S$, let $\rho(S, j)$ be the partial $R^2$ of $Y$ and $X_j$ given $\bbx_S$. Let $\rho(\emptyset, j)$ be the usual $R^2$, i.e. squared correlation between $Y$ and $X_j$.

Note that using the normal structure, $S$ is a sufficient set of predictors, if and only if $\rho(S, j) = 0$ for any $j \notin S$. So if $S$ is insufficient, then there is at least one $j \notin S$ such that $\rho(S, j)>0$.

Let $\delta^{\prime}$ be the largest number such that for any insufficient set $S$, there is some $j \notin S$ such that $\rho(S, j) \geq \delta^{\prime}$. The following result shows that $\delta^{\prime}$ is comparable to $\delta$, up to constant multiples depending only on $\sigma$ and $\tau$.

\begin{thm}\label{thm:deltaAndPrime}
    Let all the notations be as above. There exist positive constants $c$ and $C$, depending only on $\tau$ and $\sigma$ such that 
    \begin{align*}
         c\delta^\prime \leq \delta \leq C\delta^\prime.
    \end{align*}
\end{thm}
In particular, in the Gaussian setting, the quantity $\delta$ is equivalent (up to constants depending only on $\tau$ and $\sigma$) to the analogous measure $\delta^\prime$ obtained by replacing our dependence metric with the usual partial $R^2$.

\section{A Metric on Permutations}
\label{sec:permutations}

Consider the setting where both $X$ and $Y$ are one-dimensional random variables. In this case, any measure of dependence between $X$ and $Y$ may be understood as inducing a metric on the space of permutations of the sample indices. This viewpoint is natural, as dependence measures typically quantify the extent to which the joint ordering of $(X,Y)$ departs from the ordering expected under independence, and such departures can be encoded as distances between permutations. Motivated by this perspective, we show that $\nu_{n}^{\text{1-dim}}$ corresponds to a permutation-based discrepancy measure, distinct from and complementary to classical permutation metrics.

Without loss of generality, assume $\{X_i\} = \{Y_i\} = [n]$. Let $\pi$ and $\sigma$ be the permutations of $[n]$ such that
\[
    X_{\pi(1)} < \cdots < X_{\pi(n)}
    \qquad\text{and}\qquad
    Y_{\sigma(1)} < \cdots < Y_{\sigma(n)}.
\]
Let $I$ denote the identity permutation. In this representation,
\[
    r_i = \mathrm{rank}(Y_{\pi(i)}) = \sigma^{-1}\pi(i),
\]
and hence
\[
    \nu_{n}^{\text{1-dim}}(Y, X)
    = 1 - \left(\frac{n - 1}{n - 2}\right) d_\nu(\sigma, \pi),
\]
where
\begin{equation}\label{eq:dnu}
    d_\nu(\sigma, \pi)
    := \frac{1}{2} \sum_{\ell = 2}^{n-1} \sum_{i = 1}^{n-1}
    \frac{\bone\{\ell \text{ lies between }
        \sigma^{-1}\pi(i) \text{ and }
        \sigma^{-1}\pi(i+1)\}}
    {(\ell - 1)(n - \ell)}.
\end{equation}

The function $d_\nu$ satisfies the following properties:
\begin{enumerate}[label=\arabic*.]
    \item \emph{Left-invariance}: $d_\nu(\sigma, \pi) = d_\nu(\tau\sigma, \tau\pi)$ for any permutation $\tau$;
    \item $d_\nu(\sigma, \pi) = 0$ if and only if $\sigma = \pi$;
    \item In general, $d_\nu(\sigma, \pi)$ is not necessarily equal $d_\nu(\pi, \sigma)$, though a symmetric version may be obtained by
    \[
        d_\nu^{\mathrm{sym}}(\sigma, \pi)
        := \frac{1}{2}\big(d_\nu(\sigma, \pi) + d_\nu(\pi, \sigma)\big).
    \]
\end{enumerate}
Thus $d_\nu(\sigma, \pi)$ may be viewed as a valid discrepancy measure between permutations.

Numerous metrics have been proposed in the literature to quantify distances between permutations, including:
\begin{itemize}
    \item Spearman's footrule: $d_s(\sigma, \pi) = \sum_{i=1}^n |\sigma(i) - \pi(i)|$;
    \item Spearman’s rho: $d_\rho^2(\sigma, \pi) = \sum_{i=1}^n (\sigma(i) - \pi(i))^2$;
    \item Kendall’s tau: $d_\tau(\sigma, \pi) =$ the minimum number of adjacent transpositions that transform $\pi$ into $\sigma$;
    \item Cayley distance: $d_C(\sigma, \pi) =$ the minimum number of transpositions needed to transform $\pi$ into $\sigma$;
    \item Hamming distance: $d_H(\sigma, \pi) = |\{i : \sigma(i) \neq \pi(i)\}|$;
    \item Ulam distance: $d_U(\sigma, \pi) = n - \text{length of the longest increasing subsequence}$.
\end{itemize}

The discrepancy $d_\nu(\sigma, \pi)$ is most closely related to Spearman's footrule and to the oscillation measure $\mathrm{Osc}(\sigma^{-1}\pi)$ \cite{levcopoulos1989heapsort}, in that it quantifies how much $\sigma^{-1}\pi$ oscillates as $i$ moves from $i$ to $i+1$. Unlike these classical metrics, however, $d_\nu$ incorporates position-dependent weights: the contributing oscillations are scaled by $1/[(\ell - 1)(n - \ell)]$, assigning greater emphasis to oscillations occurring near the extremal ranks. This weighting structure, together with left-invariance, distinguishes $d_\nu$ from metrics such as Spearman's footrule, which are right-invariant, and highlights the distinct way in which $d_\nu$ assesses discrepancies between permutations.

Based on these differences, $d_\nu$ may be particularly useful in rank estimation settings where positional discrepancies carry unequal importance. In such contexts—such as search result evaluation or recommendation systems, where inaccuracies near the top or bottom of the ranking are substantially more consequential—$d_\nu$ offers a more sensitive means of quantifying deviations than classical metrics such as Spearman’s footrule or Kendall’s tau. A systematic investigation of these applications is left for future work.

\section{Examples}\label{sec:example}
This section presents applications of our methods to simulated and real datasets. In all cases, covariates were standardised before analysis.

\subsection{Simulation Examples}
\label{subsec:simulation_example}
\begin{ex}\label{exGen}(general behaviour)
Figure~\ref{fig:exGen} illustrates the general performance of $\nu_n$ as a measure of association. The figure consists of three rows, each beginning with a scatterplot in which $Y$ is a noiseless function of $X$, where $X$ is drawn from the uniform distribution on $[-1,1]$. Moving to the right within each row, increasing levels of noise are added to $Y$. The sample size is fixed at $n = 100$ across all cases, demonstrating that $\nu_n$ performs well even with relatively small samples.
\begin{figure}[H]
    \centering
    \begin{subfigure}[b]{0.3\textwidth}
        \includegraphics[width=\textwidth]{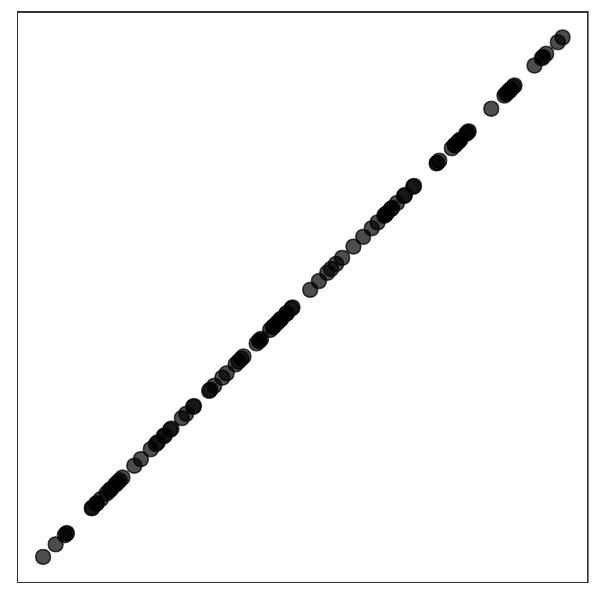}
        \caption*{\scalebox{0.95}{\tiny$\nu_n = 0.985$}}
    \end{subfigure}
    \begin{subfigure}[b]{0.3\textwidth}
        \includegraphics[width=\textwidth]{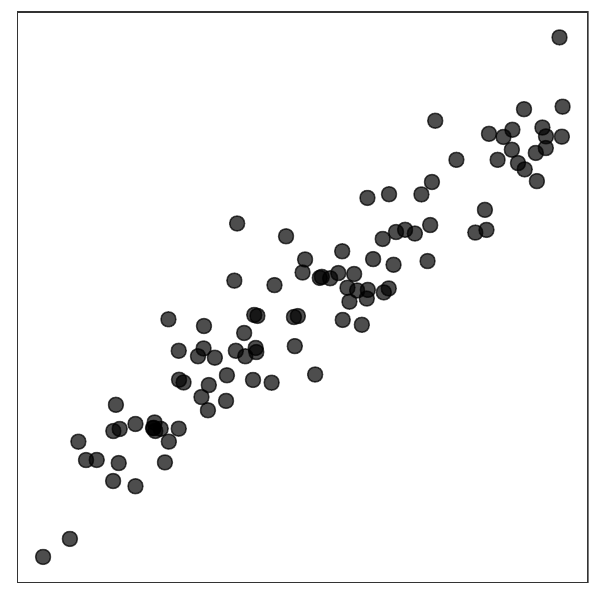}
        \caption*{\scalebox{0.95}{\tiny$\nu_n = 0.772$}}
    \end{subfigure}
    \begin{subfigure}[b]{0.3\textwidth}
        \includegraphics[width=\textwidth]{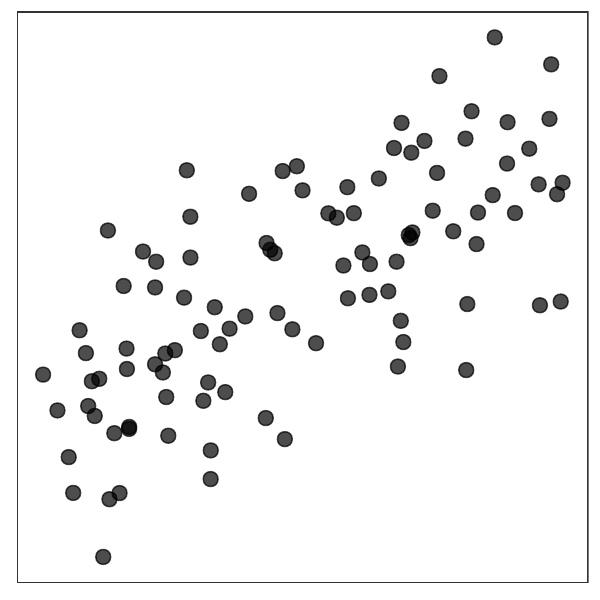}
        \caption*{\scalebox{0.95}{\tiny$\nu_n = 0.212$}}
    \end{subfigure}

    \begin{subfigure}[b]{0.3\textwidth}
        \includegraphics[width=\textwidth]{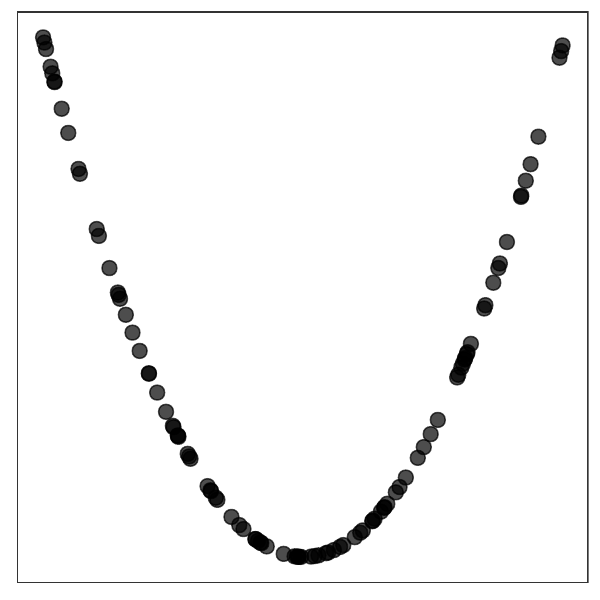}
        \caption*{\scalebox{0.95}{\tiny$\nu_n = 0.973$}}
    \end{subfigure}
    \begin{subfigure}[b]{0.3\textwidth}
        \includegraphics[width=\textwidth]{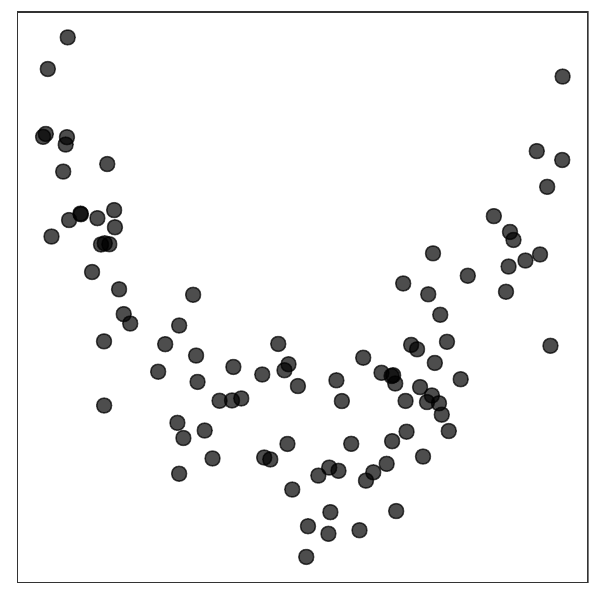}
        \caption*{\scalebox{0.95}{\tiny$\nu_n = 0.703$}}
    \end{subfigure}
    \begin{subfigure}[b]{0.3\textwidth}
        \includegraphics[width=\textwidth]{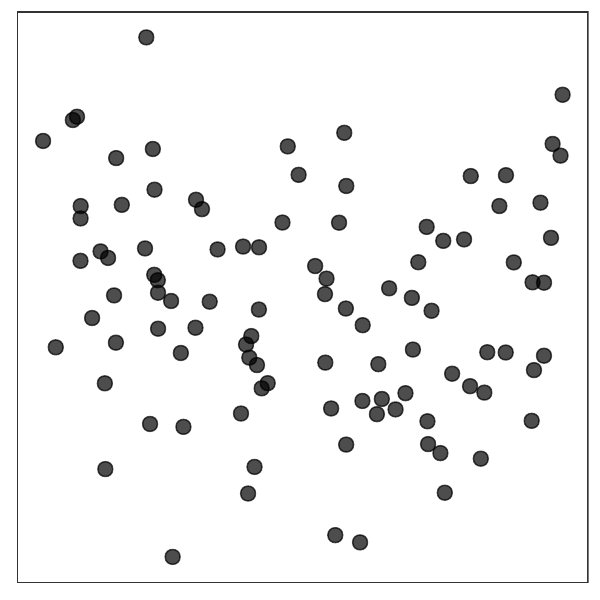}
        \caption*{\scalebox{0.95}{\tiny$\nu_n = 0.284$}}
    \end{subfigure}

    \begin{subfigure}[b]{0.3\textwidth}
        \includegraphics[width=\textwidth]{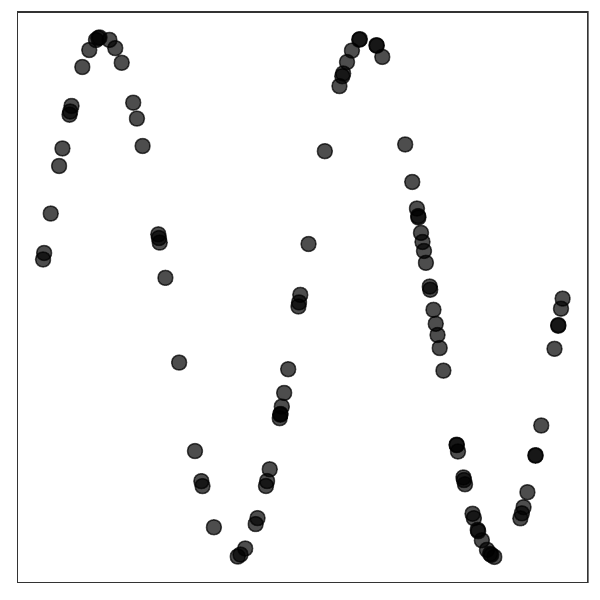}
        \caption*{\scalebox{0.95}{\tiny$\nu_n = 0.901$}}
    \end{subfigure}
    \begin{subfigure}[b]{0.3\textwidth}
        \includegraphics[width=\textwidth]{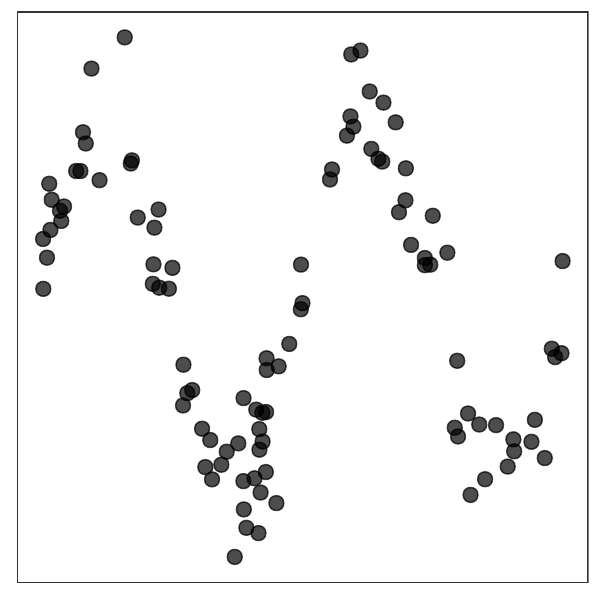}
        \caption*{\scalebox{0.95}{\tiny$\nu_n = 0.667$}}
    \end{subfigure}
    \begin{subfigure}[b]{0.3\textwidth}
        \includegraphics[width=\textwidth]{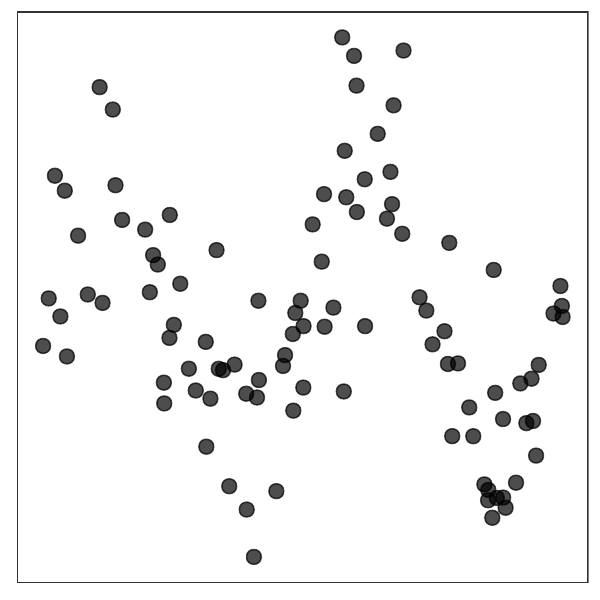}
        \caption*{\scalebox{0.95}{\tiny$\nu_n = 0.301$}}
    \end{subfigure}
    \caption{Values of $\nu_n(Y, X)$ for various kinds of scatterplots with $n = 100$. Noise increases from left to right.}
    \label{fig:exGen}
\end{figure}
In each row, we observe that $\nu_n$ is close to 1 in the leftmost plot and gradually decreases as more noise is introduced. In each column, we observe that the values of $\nu_n$ are comparable, meaning that $\nu_n$ satisfies the notion of \textit{equitability} defined in \cite{reshef2011detecting}: ``to assign similar scores to equally noisy relationships of different types''.
\end{ex}

\begin{ex}\label{exAsymptotic}(asymptotic behaviour) We numerically study the distribution of $\nu_{n}^{\text{1-dim}}(Y, X)$ under the assumption that $Y$ and $X$ are independent. In particular, we take the $\{X_i\}$ and $\{Y_i\}$ to be independent and identically distributed $\mathrm{Uniform}[0,1]$ random variables and focus first on the case $n = 20$. Using $10{,}000$ Monte Carlo replications, we obtain the empirical distribution of $\nu_{n}^{\text{1-dim}}(Y, X)$; the resulting histogram is presented in Figure~\ref{fig:hist_unif_ind20}. Even at this relatively small sample size, the normal approximation provides a reasonable fit. For comparison, Figure~\ref{fig:hist_unif_ind1000} displays the corresponding histogram for $n = 1000$, where the alignment with the normal distribution becomes even more pronounced.
\begin{figure}[ht]
    \centering
    \begin{subfigure}[b]{0.4\textwidth}
        \includegraphics[width=\textwidth]{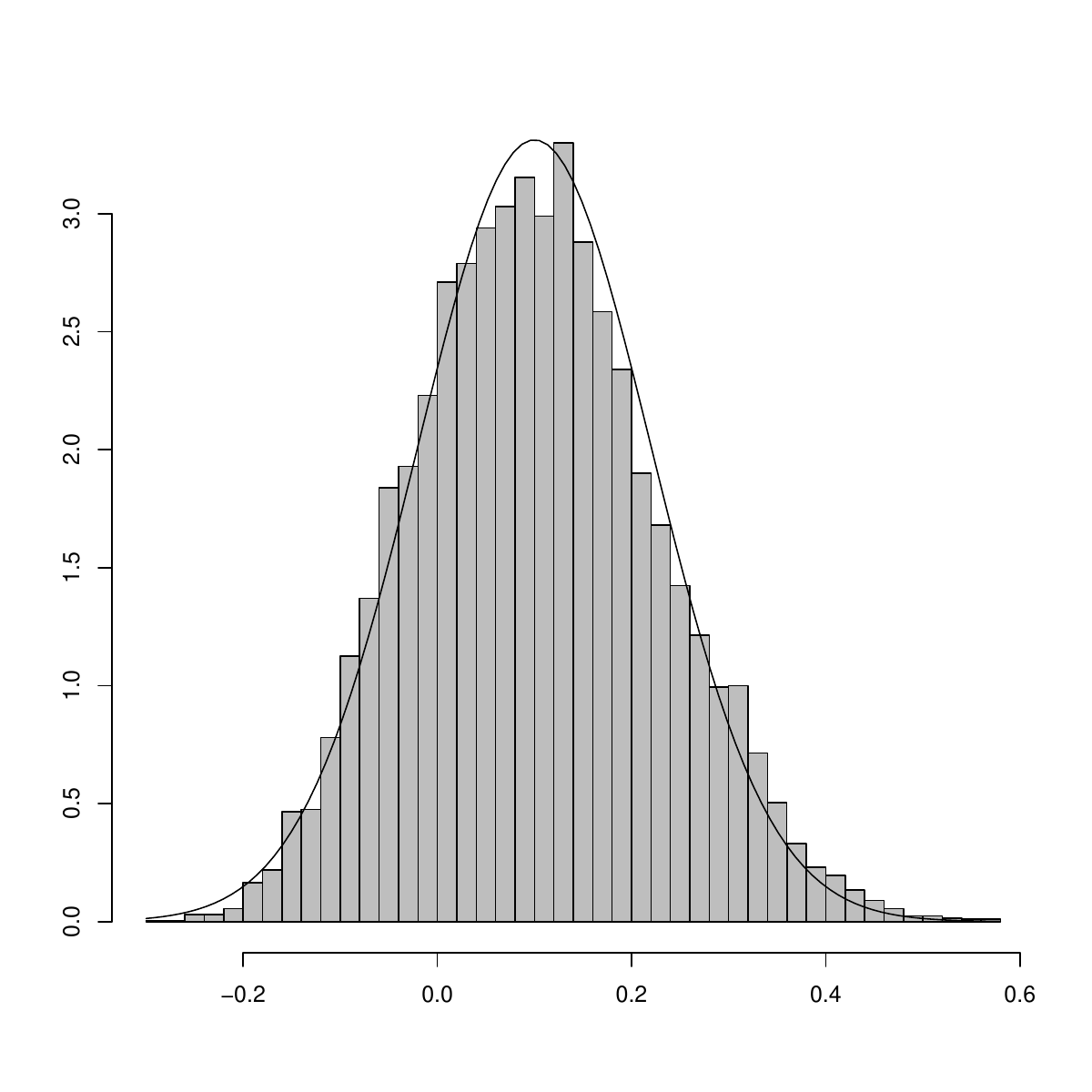}
        \caption{$n = 20$}
        \label{fig:hist_unif_ind20}
    \end{subfigure}
    ~ 
    \begin{subfigure}[b]{0.4\textwidth}
        \includegraphics[width=\textwidth]{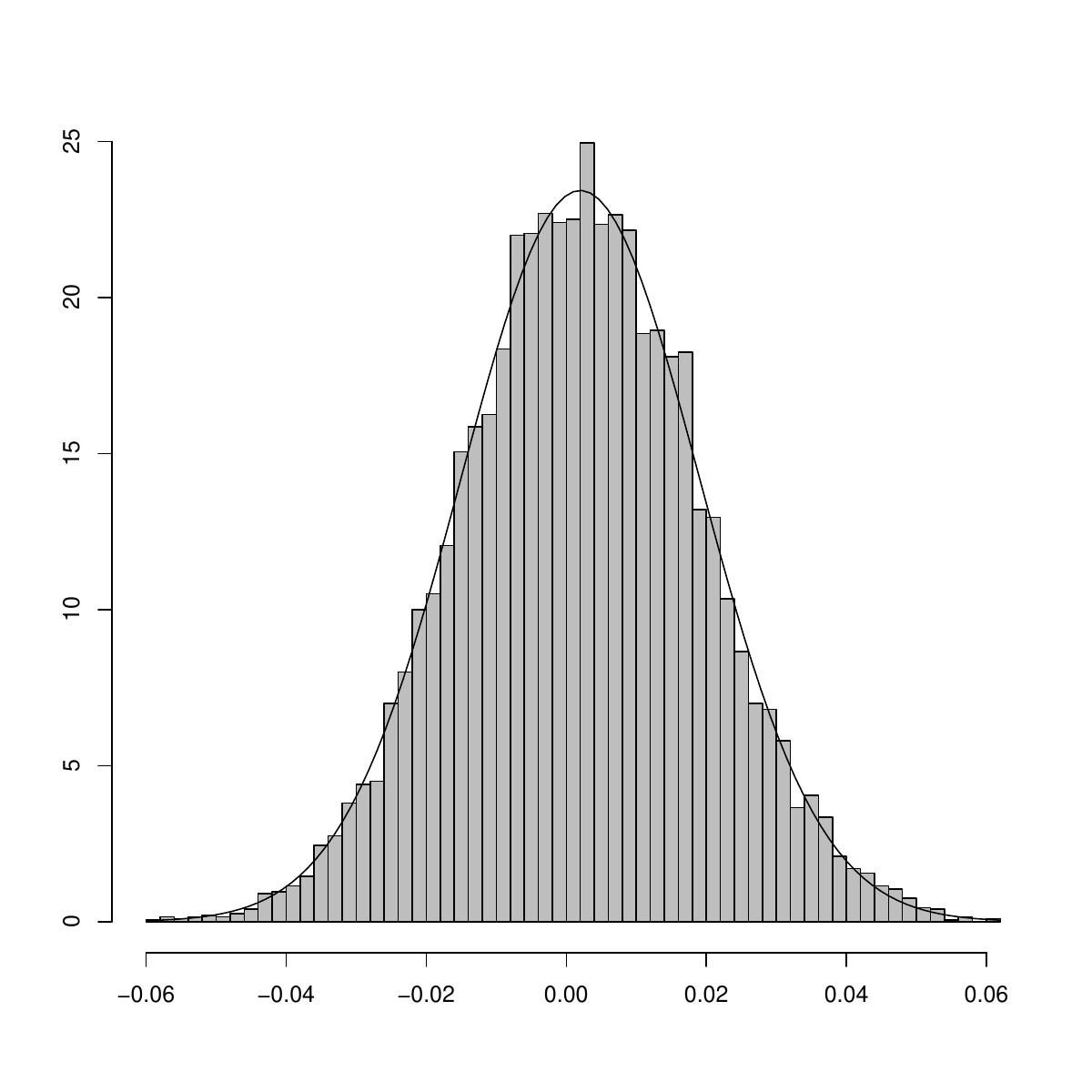}
        \caption{$n = 1000$}
        \label{fig:hist_unif_ind1000}
    \end{subfigure}
    \caption{Histogram of $10000$ simulations of $\nu_{n}^{\text{1-dim}}(Y, X)$ with $X$ and $Y$ independently distributed as \text{Uniform}$[0, 1]$, overlaid with the asymptotic normal density $N(\mu_n, \sigma_n^2)$, where $\mu_n = 2/n$ and $\sigma_n^2 = (\pi^2/3 - 3)/n$.}
    \label{fig:histInd}
\end{figure}
We also examine a setting where $X$ and $Y$ are dependent. To this end, we consider the following simple model: let $X$ and $Z$ be independent random variables, each distributed as \text{Uniform}$[0, 1]$, and define $Y:= XZ$. We have
\begin{equation*}
    \nu(Y,X) = \int_0^1 \frac{1 + 2t \log t - t^2 - \left(1 - t + t \log t\right)^2}{\left(1 - t + t \log t\right)\left(t - t \log t\right)} \cdot (-\log t) \, dt
\end{equation*}
which is approximately equal to $0.3126$. To study the asymptotic behaviour of $\nu_{n}^{\text{1-dim}}(Y, X)$, we perform $10000$ simulations with $n = 1000$. The sample mean of $\nu_{n}^{\text{1-dim}}(Y, X)$ is approximately $0.314$, with a standard deviation of about $0.02$. The resulting histogram, shown in Figure~\ref{fig:histUnifDep1000}, exhibits an excellent fit with a normal distribution having the same mean and standard deviation.
\begin{figure}[ht]
  \centering
    \includegraphics[width=0.5\textwidth]{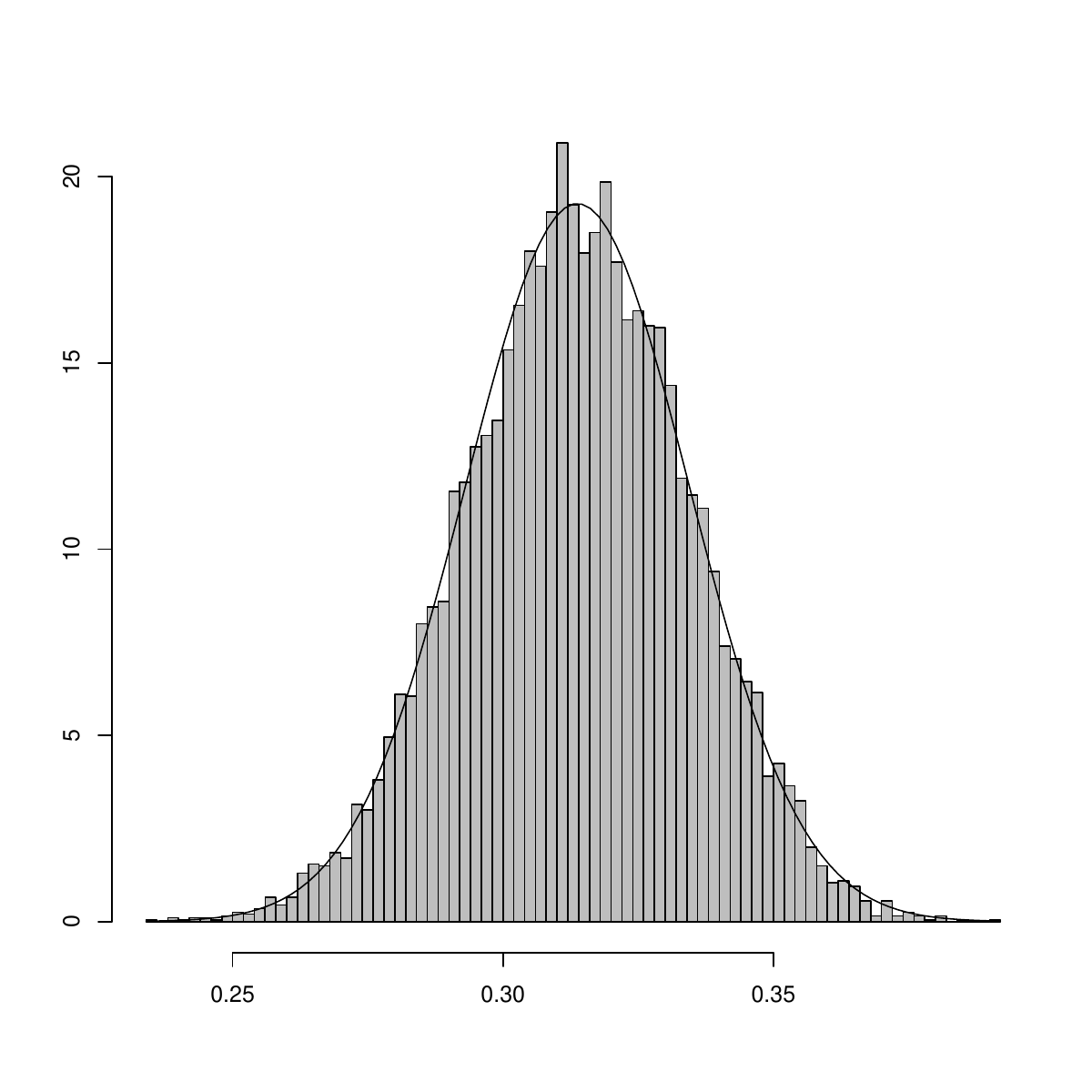}
    \caption{Histogram of $10{,}000$ simulations of $\nu_{n}^{\text{1-dim}}(Y, X)$ under the dependence structure between $X$ and $Y$ described in Example~\ref{exAsymptotic}, overlaid with the normal density curve whose estimated mean and standard deviation are $0.314$ and $0.02$, respectively.}
    \label{fig:histUnifDep1000}
\end{figure}
\end{ex}

\begin{ex}\label{ex:cond_estim}(conditional dependence) Let $X_1$ and $X_2$ be independent $\mathrm{Uniform}[0,1]$ random variables, and define
$Y \coloneqq (X_1 + X_2)\,(\mathrm{mod}\,1).$
The relationship between $Y$ and $(X_1,X_2)$ has the following properties: (i) $Y$ is a function of $(X_1,X_2)$; (ii) unconditionally, $Y$ is independent of $X_2$; (iii) conditional on $X_1$, $Y$ is a function of $X_2$.

Consider the corresponding sample $\{(Y_i, X_{1i}, X_{2i})\}_{i=1}^n$ with $n = 1000$. In approximately $95\%$ of the simulations, 
$\nu_n(Y,(X_1,X_2))$ took values between $0.824$ and $0.891$, 
$\nu_n(Y, X_2 \mid X_1)$ lay between $0.821$ and $0.892$, 
and $\nu_n(Y, X_2)$ ranged from $-0.048$ to $0.046$, consistent with the established properties. 

These results demonstrate that $\nu_n$ effectively captures strong conditional dependence, similar to the statistic $T$ in~\cite{azadkia2021simple}, whereas some alternative measures of conditional dependence—such as conditional distance correlation \cite{Wang2015ConditionalDistanceCorrelation}—fail to quantify the strength of the conditional dependence between $Y$ and $X_2$ given $X_1$.
\end{ex}

\begin{ex}\label{exPowerAnalysis}(power comparison $p=1$) In this example, we assess the power of the independence test based on $\nu_n$ and its one-dimensional variant $\nu_{n}^{\text{1-dim}}$, and compare their performance against several recently proposed, powerful tests. The test statistics included in our comparison are: Maximal information coefficient (MIC)~\cite{reshef2011detecting}, Distance correlation~\cite{szekely2007measuring}, the Hilbert–Schmidt independence criterion (HSIC)~\cite{gretton2005measuring, gretton2008kernel}, the HHG statistic~\cite{heller2013consistent}, Chatterjee's $\xi_n$ xicor correlation coefficient~\cite{chatterjee2021new}, and $T_n$ statistics~\cite{azadkia2021simple}.  
This experiment is conducted in two separate settings: univariate and multivariate.

We consider $(X_1, Y_1), \ldots, (X_n, Y_n)$ an i.i.d. sample drawn from a distribution on $\rr^2$. We adopt the same experimental setup as described in Section~4.3 of~\cite{chatterjee2021new}. Power comparisons were conducted with a sample size of $n = 100$, using $500$ simulations to estimate the power in each scenario. The variable $X$ was generated from the uniform distribution on $[-1, 1]$, the noise parameter $\lambda$ ranged from 0 to 1, and the noise variable $\varepsilon \sim N(0,1)$, which is independent of $X$. The following six alternatives were considered:

\begin{enumerate}
    \item Linear: $Y = 0.5 X + 3 \lambda \varepsilon$, 
    \item Step function: $Y = f(X)+10 \lambda \varepsilon$, where $f$ takes values $-3,2,-4$ and $-3$ in the intervals $[-1,-0.5),[-0.5,0),[0,0.5)$ and $[0.5,1]$,
    \item W-shaped: $Y=\abs{X + 0.5} \bone\{X<0\} + \abs{X-0.5} \bone\{X \geq 0\} + 0.75 \lambda \varepsilon$,
    \item Sinusoid: $Y = \cos 8 \pi X + 3 \lambda \varepsilon$,
    \item Circular: $Y=Z \sqrt{1-X^2} + 0.9 \lambda \varepsilon$, where $Z$ is 1 or -1 with equal probability, independent of $X$,
    \item Heteroskedastic: $Y = 3(\sigma(X)(1-\lambda) + \lambda) \varepsilon$, where $\sigma(X) = 1$ if $\abs{X} \leq$ 0.5 and 0 otherwise.
\end{enumerate}
The \texttt{R} packages \texttt{energy}~\cite{energy}, \texttt{minerva}~\cite{minerva}, \texttt{HHG}~\cite{HHG}, \texttt{dHSIC}~\cite{dHSIC}, \texttt{XICOR}~\cite{chatterjee2020xicor}, and \texttt{FOCI}~\cite{azadkia1foci} were employed to compute the distance correlation, MIC, HHG, HSIC, $\xi_n$ and $T_n$ statistics, respectively. The p-values were calculated using $1000$ independent permutations and the power is estimated at the significance level of $5\%$.

The plots in Figure~\ref{fig:power} illustrate that $\nu_n$ and $\nu_n^{\text{1-dim}}$ are competitive with $\xi_n$ and outperform other tests in scenarios where the underlying dependency has an oscillatory structure, such as the W-shaped and sinusoidal settings. However, their power is relatively lower for smooth alternatives like the linear, circular, and heteroskedastic patterns.

A comparison between $\nu_n^{\text{1-dim}}$ and its counterpart $\xi_n$, as well as between $\nu_n$ and $T_n$, reveals consistently slightly higher power for the former in both pairs. Furthermore, across all alternatives, the simpler one-dimensional statistics, $\nu_n^{\text{1-dim}}$ and $\xi_n$, tend to outperform their more flexible counterparts, $\nu_n$ and $T_n$, respectively. This advantage is likely due to their reduced variance. Specifically, the simpler methods use only the immediate next neighbour when ordering the predictor $X$, whereas the more complex versions can choose freely between preceding and succeeding neighbours. This added flexibility introduces higher variability in the estimation, reducing power.

\begin{figure}[H]
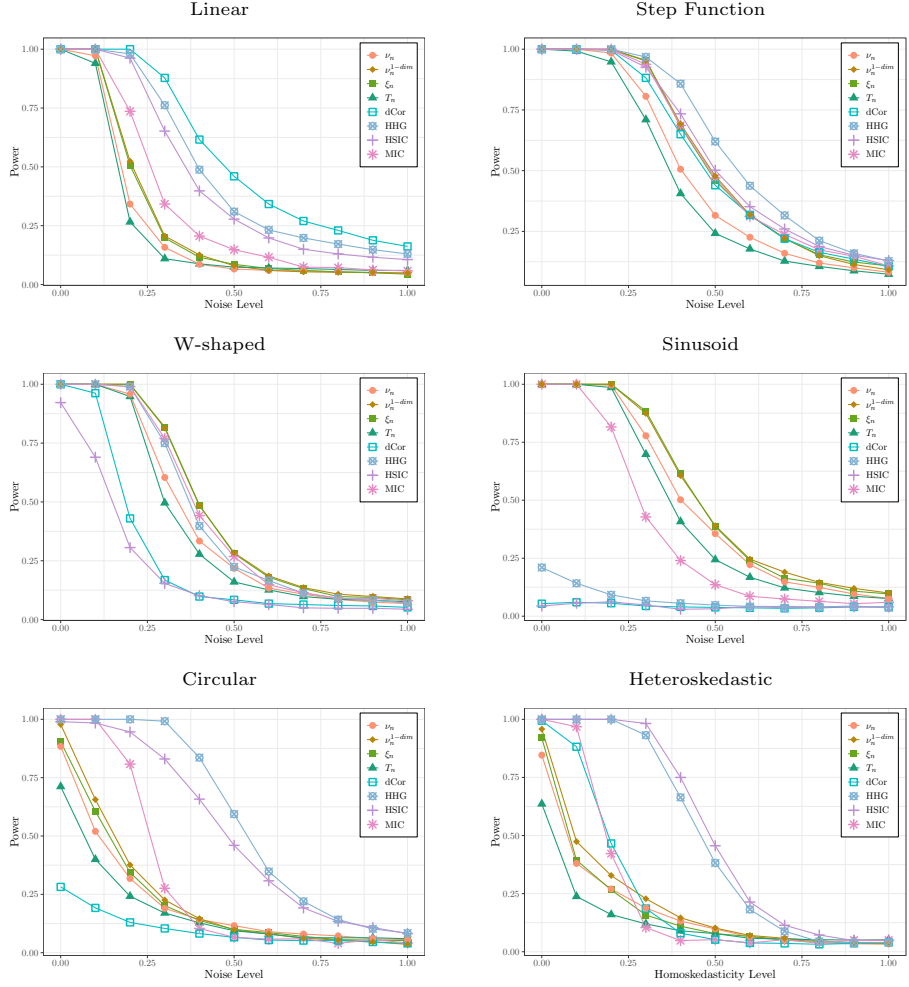

\centering
\begin{tabular}{cc}
\begin{tabular}{c}
\tiny Linear \\[1mm]
\resizebox{0.45\linewidth}{!}{\input{power_vs_lambda_linear}}
\end{tabular}
&
\begin{tabular}{c}
\tiny Step Function \\[1mm]
\resizebox{0.45\linewidth}{!}{\input{power_vs_lambda_step}}
\end{tabular}
\\[6mm]

\begin{tabular}{c}
\tiny W-shaped \\[1mm]
\resizebox{0.45\linewidth}{!}{\input{power_vs_lambda_W}}
\end{tabular}
&
\begin{tabular}{c}
\tiny Sinusoid \\[1mm]
\resizebox{0.45\linewidth}{!}{\input{power_vs_lambda_sin}}
\end{tabular}
\\[6mm]

\begin{tabular}{c}
\tiny Circular \\[1mm]
\resizebox{0.45\linewidth}{!}{\input{power_vs_lambda_circular}}
\end{tabular}
&
\begin{tabular}{c}
\tiny Heteroskedastic \\[1mm]
\resizebox{0.45\linewidth}{!}{\input{power_vs_lambda_heteroskedastic}}
\end{tabular}
\end{tabular}
\caption{Comparison of power of several tests of independence described in Example~\ref{exPowerAnalysis}. The level of the noise or homoskedasticity increases from left to right. In each
case, the sample size is 100, and 500 simulations were used to estimate the power. The p-values were calculated using 1000 independent permutations.}
\label{fig:power}
\end{figure} 
In addition, we consider the following alternatives which highlights some settings that $\nu_n$ and $\nu_n^{\text{1-dim}}$ achieve significantly higher power.
\begin{enumerate}
  \setcounter{enumi}{6}
  \item Heteroskedastic sinusoid: $Y = \cos (20\pi(1 + 10\lambda \varepsilon)X^2)$.
  
  \item Oscillatory in the tails: $Y = \mathbbm{1}\{|X| \le  \lambda \}\, U + \mathbbm{1}\{|X| > \lambda \}\,\cos(10\pi X^2 + U / 10)$, where $U \sim \mathrm{Uniform}[-1, 1]$.
\end{enumerate}
Figure~\ref{fig:power_best_uni} illustrate that in these cases $\nu_n^{\text{1-dim}}$ and $\nu_n$ appear more powerful than other tests, including $\xi_n$. These examples demonstrates that the new coefficient is more effective at detecting sinusoidal relationships and less sensitive to heteroskedasticity compared to $\xi_n$.

\begin{figure}[ht]
\centering
\begin{tabular}{cc}
\begin{tabular}{c}
Heteroskedastic and Sinusoid \\[2mm]
\resizebox{0.45\linewidth}{!}{\input{power_vs_lambda_mix18}}
\end{tabular}
&
\begin{tabular}{c}
Oscillatory in tails  \\[2mm]
\resizebox{0.45\linewidth}{!}{\input{power_vs_lambda_osc_tail67}}
\end{tabular}
\\[6mm]
\end{tabular}
\caption{Comparison of the empirical power of several tests of independence described in Example~\ref{exPowerAnalysis}. The noise level (or degree of homoskedasticity) increases from left to right. The sample size is 
$n=100$, and power is estimated based on 500 Monte Carlo simulations. P-values are computed using 1,000 independent permutations.}
\label{fig:power_best_uni}
\end{figure}
\end{ex}

\begin{ex}\label{ex:multi}(power comparison $p = 3$)
In this experiment, we consider a multivariate predictor $\bbx \in \rr^3$. Specifically,
$(\bbx_1, Y_1), \ldots, (\bbx_n, Y_n)$ are i.i.d.\ samples drawn from a joint
distribution $(\bbx, Y)\in\rr^4$. We adopt the same experimental framework as in Example~\ref{exPowerAnalysis} and conduct power
comparisons with sample size $n = 100$, using 100 simulations to estimate the power in
each scenario. The predictor $\bbx$ is generated from a multivariate normal
distribution $N(\mathbf{0}, \mathbf{I}_3)$. The noise parameter $\lambda$ ranges from
$0$ to $1$, and the noise variable $\varepsilon \sim N(0,1)$ is independent of
$\bbx = (X_1, X_2, X_3)$. We consider the following alternatives:
\begin{enumerate}
    \item Linear:
    $Y = 3 X_1 + 2 X_2 - 3 X_3 + 20 \lambda \varepsilon$.
    \item Non-linear:
    $Y = X_1 X_2 X_3 + X_1 / X_3 + 5 \lambda \varepsilon$.
    \item Oscillatory:
    $Y = \sin(\pi \sqrt{X_1^2 + X_2^2 + X_3^2}) + 2 \lambda \varepsilon$.
    \item XOR:
    $Y = \operatorname{sign}(X_1 X_2 X_3) + 2 \lambda \varepsilon$.
\end{enumerate}
In this multivariate setting, we compare $\nu_n$ with distance correlation, HHG, HSIC, and
$T_n$, since all of these methods extend naturally to multivariate predictors. We use
1000 independent permutations to compute the $p$-values, and estimate power at the
$5\%$ significance level.
\begin{figure}[H]
\centering
\begin{subfigure}[t]{0.48\linewidth}
\centering
Linear\\[2mm]
\resizebox{\linewidth}{!}{\input{power_vs_lambda_multivariate_lin}}
\end{subfigure}
\hfill
\begin{subfigure}[t]{0.48\linewidth}
\centering
Non-linear\\[2mm]
\resizebox{\linewidth}{!}{\input{power_vs_lambda_multivariate_nonlin3}}
\end{subfigure}
\vspace{3mm}
\begin{subfigure}[t]{0.48\linewidth}
\centering
Oscillatory\\[2mm]
\resizebox{\linewidth}{!}{\input{power_vs_lambda_multivariate_osc3}}
\end{subfigure}
\hfill
\begin{subfigure}[t]{0.48\linewidth}
\centering
XOR\\[2mm]
\resizebox{\linewidth}{!}{\input{power_vs_lambda_multivariate_xor}}
\end{subfigure}

\caption{Comparison of the power of several tests of independence in the multivariate setting (selected nonlinear, linear, oscillatory, and XOR alternatives) described in Example~\ref{ex:multi}. Sample size $n = 100$; 500 simulations; $p$-values computed via 1000 permutations.}
\label{fig:power_multivariate_subset}
\end{figure}
Figure~\ref{fig:power_multivariate_subset} shows that the
proposed statistic $\nu_n$ consistently outperforms competing methods in scenarios
involving oscillatory or strongly nonlinear alternatives. Its superior performance is even more pronounced across a broader range of
alternatives in the multivariate setting compared to the univariate case. 
\end{ex}

\begin{ex}\label{exTime}(time complexity)
In this example, we compare the computational complexity of several dependence measures: 
$\xi_n$ from \cite{chatterjee2021new} implemented in the \texttt{R} package \texttt{XICOR}~\cite{chatterjee2020xicor}; 
$T_n$ from \cite{azadkia2021simple} implemented in the \texttt{R} package \texttt{FOCI}; 
the kernel-based measures $\widehat{\rho^2}$ and $\widetilde{\rho^2}$ from \cite{deb2022kernelpartialcorrelation} implemented in the \texttt{R} package \texttt{KPC}~\cite{KPCpackage}; 
and the proposed coefficients $\nu_n$ and $\nu_n^{\text{1-dim}}$. 
As noted in \cite[Table~2]{chatterjee2021new}, $\xi_n$ is hundreds to thousands of times faster than other widely used dependence measures, including MIC~\cite{reshef2011detecting}, distance correlation~\cite{szekely2007measuring}, HSIC~\cite{gretton2005measuring,gretton2008kernel}, and the HHG statistic~\cite{heller2013consistent}. 
Therefore, we restrict our comparison to $\xi_n$ and $T_n$, the proposed coefficients $\nu_n$ and $\nu_n^{\text{1-dim}}$, and the recently developed kernel-based coefficients $\widehat{\rho^2}$ and $\widetilde{\rho^2}$.

We independently sample $X$ and $Y$ from the standard normal distribution and perform $100$ replications. The average computation time in seconds for each method is reported in Table~\ref{table:time_simulations}. The most efficient methods are $\nu_{n}^{\text{1-dim}}$ and $\xi_n$, both exhibiting $O(n \log n)$ computational complexity. The superior runtime of $\nu_n^{\text{1-dim}}$ relative to $\xi_n$ is due to implementation efficiency rather than a difference in asymptotic order. Although $\xi_n$ may appear faster when constant weights are used, as discussed in Section~\ref{sec:comparision_with_xi}, the weight computation for $\nu_{n}^{\text{1-dim}}$ is rank-based and exploits ranks that are already computed and reused as part of the statistic, incurring no additional cost and preserving the $O(n \log n)$ complexity. The statistics $\nu_n$ and $T_n$ also operate in $O(n \log n)$ time. In contrast, the kernel-based measures $\widehat{\rho^2}$ and $\widetilde{\rho^2}$ are substantially more computationally demanding, with a computational complexity of $O(n^2)$.

\begin{table}[ht]
\centering
\small
\begin{tabular}{lcccccc}
\toprule
$n$ & $\nu_n^{\text{1-dim}}$ & $\nu_n$ & $\xi_n$ & $T_n$ & $\widehat{\rho^2}$ & $\widetilde{\rho^2}$ \\
\hline
10    & \textbf{0.00035} & 0.00098 & 0.00092 & 0.01092 & 0.01468 & 0.01036 \\
31    & \textbf{0.00039} & 0.00133 & 0.00059 & 0.00323 & 0.01046 & 0.00982 \\
100   & \textbf{0.00044} & 0.00311 & 0.00069 & 0.00417 & 0.01886 & 0.01558 \\
316   & \textbf{0.00049} & 0.00866 & 0.00079 & 0.00734 & 0.05568 & 0.11863 \\
1000  & \textbf{0.00076} & 0.02761 & 0.00114 & 0.01684 & 0.33250 & 3.03182 \\
3162  & \textbf{0.00176} & 0.11807 & 0.00247 & 0.04560 & 3.11779 & 88.56498 \\
10000 & \textbf{0.00485} & 0.68661 & 0.00731 & 0.14825 & 34.68341 & 2604.97461 \\
\bottomrule
\end{tabular}
\caption{Average runtime (in seconds) of various dependence measures across different sample sizes. The lowest runtime in each row is shown in bold.}
\label{table:time_simulations}
\end{table}
\end{ex}

\begin{ex}\label{exVar}(variable selection with built-in stopping rules) We evaluate the performance of FORD and compare it with FOCI \cite{azadkia2021simple} across a variety of settings. Both FORD and FOCI are model-free, require no tuning parameters, and include built-in stopping rules. In contrast, the high computational complexity of $\widehat{\rho^2}$ and $\widetilde{\rho^2}$ (see Table~\ref{table:time_simulations}) makes KFOCI~\cite{deb2022kernelpartialcorrelation} substantially slower than both FOCI and FORD. Repeated experiments at larger sample sizes ($n = 500$ and $n = 1000$) become prohibitively time-consuming. Moreover, $\widehat{\rho^2}$ and $\widetilde{\rho^2}$—and therefore KFOCI—require hyperparameter tuning, which further increases computational and methodological complexity. For these reasons, we do not report results for KFOCI in this section. In addition, the strong empirical performance of FOCI relative to competing methods such as
LASSO \cite{tibshirani1996regression}, the Dantzig selector \cite{candes2007dantzig},
and SCAD \cite{fan2001variable} has been demonstrated in detail in
\cite[Examples~8.3 and~8.4]{azadkia2021simple} and
\cite[Subsection~6.2.1]{deb2022kernelpartialcorrelation}. Consequently, we do not repeat those comparisons here and focus exclusively on comparing FORD and FOCI in this example.

We consider the following models with sample size $n\in\{100, 500, 1000\}$, covariates $\bbx = (X_1, \ldots, X_p) \sim N(\mathbf{0}, \mathbf{I}_p)$ with $\mathbf{I}_p$ the $p$ by $p$ identity matrix where $p = 1000$, and independent noise variable $\varepsilon$:

\begin{enumerate}
    \item  LM (linear model): $Y = 3X_1 + 2X_2 - X_3 + \varepsilon$, $\varepsilon\sim N(0,1)$

    \item Nonlin1 (nonlinear model):
    $Y = X_1 X_2 + \sin(X_1 X_3)$

    \item Nonlin2 (non-additive noise): 
    $Y = |X_1 + \varepsilon|^{\sin(X_2 - X_3)}$, $\varepsilon \sim \text{Uniform}[0,1]$

    \item Osc1 (oscillatory):
    $Y = \sin(X_1)/\sqrt{|X_1|} + X_2 X_3$

    \item Osc2 (oscillatory with interaction):
    $Y = \sin(X_2)/X_1 + X_1 X_3$
\end{enumerate}
For the implementation, we use the \texttt{R} packages \texttt{FOCI}~\cite{azadkia1foci} and \texttt{FORD}~\cite{azadkiafordpackage}. In all the models considered, the true Markov blanket of $Y$ is $\{X_1, X_2, X_3\}$. Table~\ref{table:simulations} presents the results over $1000$ iterations, summarising the following:
\begin{enumerate}
    \item The proportion of times $\{X_1, X_2, X_3\}$ is exactly recovered,
    \item The proportion of times $\{X_1, X_2, X_3\}$ has been selected, possibly along with additional variables,
    \item The average number of falsely selected variables.
\end{enumerate}

The results in Table~\ref{table:simulations} show that FORD consistently outperforms FOCI across all linear and nonlinear models considered, both in terms of exact recovery and fewer falsely selected variables. 
\begin{table}[ht]
    \centering
    \small
    \renewcommand{\arraystretch}{1.2}
    \begin{tabular}{lccc}
        \toprule
         &  & FORD & FOCI   \\
         Models & n & exact/inclusion/avg.false. & exact/inclusion/avg.false.  \\
        \midrule
        LM      & 100  & \textbf{0.030}/\textbf{0.303}/\textbf{1.609} & 0.003/0.064/2.720 \\
        LM      & 500  & \textbf{0.526}/\textbf{1.000}/\textbf{0.474} & 0.103/0.974/0.932 \\
        LM      & 1000 & \textbf{0.808}/\textbf{1.000}/\textbf{0.192} & 0.253/1.000/0.748 \\
        \midrule
        Nonlin1 & 100  & \textbf{0.015}/\textbf{0.063}/\textbf{3.281} & 0.001/0.015/3.948 \\
        Nonlin1 & 500  & \textbf{0.228}/\textbf{0.479}/\textbf{1.517} & 0.061/0.158/2.445 \\
        Nonlin1 & 1000 & \textbf{0.547}/\textbf{0.824}/\textbf{0.620} & 0.172/0.347/1.751 \\
        \midrule
        Nonlin2 & 100  & 0.000/\textbf{0.002}/\textbf{3.205} & 0.000/0.000/3.988 \\
        Nonlin2 & 500  & \textbf{0.059}/\textbf{0.259}/\textbf{2.091} & 0.004/0.073/2.826 \\
        Nonlin2 & 1000 & \textbf{0.245}/\textbf{0.520}/\textbf{1.388} & 0.042/0.162/2.280 \\
        \midrule
        Osc1  & 100  & \textbf{0.028}/\textbf{0.116}/\textbf{3.071} & 0.001/0.026/3.519 \\
        Osc1  & 500  & \textbf{0.572}/\textbf{0.802}/\textbf{0.602} & 0.243/0.382/1.319 \\
        Osc1  & 1000 & \textbf{0.938}/\textbf{0.992}/\textbf{0.070} & 0.574/0.752/0.569 \\
        \midrule
        Osc2  & 100  & \textbf{0.004}/\textbf{0.026}/\textbf{3.004} & 0.000/0.004/4.046 \\
        Osc2  & 500  & \textbf{0.418}/\textbf{0.661}/\textbf{1.054} & 0.038/0.098/2.754 \\
        Osc2  & 1000 & \textbf{0.809}/\textbf{0.966}/\textbf{0.233} & 0.117/0.229/2.108 \\
        \bottomrule
    \end{tabular}
    \caption{Proportion of times the Markov boundary was exactly recovered, the proportion it was included in the selected set, and the average number of falsely selected variables across $1000$ iterations. For each row, the better-performing method is highlighted in bold. Models described in Example~\ref{exVar}.}
    \label{table:simulations}
\end{table}
\end{ex}

\begin{ex}\label{exVar_screening}(Variable selection with oracle stopping rules) We compare FORD with the Sure Independence Screening (SIS) method \cite{Fan2008SureIndependenceScreening} and its variants, which are designed for ultra–high-dimensional settings. These methods rely on marginal dependence between covariates and the response and serve primarily as a preliminary screening step that yields a reduced variable set to be forwarded to a downstream selection procedure. However, in moderately high-dimensional regimes, SIS-based approaches can be suboptimal: because they depend exclusively on marginal associations, they may fail to recover the correct Markov blanket when signal variables are correlated with other covariates.

In this experiment, we evaluate the performance of FORD and compare it with FOCI~\cite{azadkia2021simple}, as well as two representative SIS methods\footnote{There exists a large class of screening methods based on marginal dependence,
defined using different dependence measures. In principle, one could also construct SIS
procedures based on $\nu_n$, $T_n$, or $\xi_n$. Our choice of methods is guided by the
availability of reliable implementations, as well as computational and memory
considerations. For example, PCSIS, which is based on projection correlation, is substantially more
computationally and memory intensive than the other methods considered here. In our
experiments with $n=1000$ and $p=200$, PCSIS required more than 8 seconds of computation
time and over 1.6~GB of memory, whereas the remaining methods completed in under
2 seconds with significantly lower memory requirements.}:
Distance Correlation Sure Independence Screening (DCSIS) \cite{Li2012DistanceCorrelation}
and Ball Correlation Sure Independence Screening (BCORSIS)
\cite{pan2020ball,Pan2019SureIndependenceScreening}.

We use the available implementations in the \texttt{R} packages \texttt{FORD}~\cite{azadkiafordpackage},
\texttt{FOCI}~\cite{azadkia1foci}, \texttt{MFSIS}~\cite{MFSIS}, and \texttt{Ball}~\cite{Ball}. The most commonly used stopping rule for
SIS methods is an oracle rule that selects the top covariates ranked by marginal
dependence. Therefore, we use the true number of signal variables as stopping rule for all methods.

We consider the following models with sample sizes $n \in \{100, 500, 1000\}$ and
covariates $\bbx = (X_1, \ldots, X_p)$ with $p = 100$ where noise variable $\varepsilon$ is independent of $\bbx$ and $X_i\sim N(0, 1)$.
\begin{enumerate}
    \item LM:
    $Y = 3X_1 + 2X_2 - X_3 + \varepsilon$,
    $\varepsilon \sim N(0,1)$,
    $(X_1, \ldots, X_{100}) \sim N(\mathbf{0}, \mathbf{I}_{100})$.
    \item LM-corr:
    $Y = 3X_1 + 2X_2 - X_3 + \varepsilon$ with $X_1, X_2$ and $X_3$ i.i.d.,
    $\varepsilon \sim N(0,1)$,
    and $\mathrm{corr}(X_m, X_1) = 0.7$ for $m \in \{4, \ldots, 100\}$, where $\mathrm{corr}$ denotes Pearson correlation.
    \item Nonlin2:
    $Y = |X_1 + \varepsilon|^{\sin(X_2 - X_3)}$,
    $\varepsilon \sim \mathrm{Uniform}[0,1]$,
    $(X_1, \ldots, X_{100}) \sim N(\mathbf{0}, \mathbf{I}_{100})$.
    \item Nonlin2-corr:
    $Y = |X_1 + \varepsilon|^{\sin(X_2 - X_3)}$ with $X_1, X_2$ and $X_3$ i.i.d.,
    $\varepsilon \sim \mathrm{Uniform}[0,1]$,
    with $\mathrm{corr}(X_m, X_1) = 0.7$ for $m \in \{4, \ldots, 100\}$, where $\mathrm{corr}$ denotes Pearson correlation.
    \item Osc2:
    $Y = \sin(X_2)/X_1 + X_1 X_3$, with $(X_1, \ldots, X_{100}) \sim N(\mathbf{0}, \mathbf{I}_{100})$
    \item Osc2-corr:
    $Y = \sin(X_2)/X_1 + X_1 X_3$ with $X_1, X_2$ and $X_3$ i.i.d.,
    with $\mathrm{corr}(X_m, X_1) = 0.7$ for $m \in \{4, \ldots, 100\}$, where $\mathrm{corr}$ denotes Pearson correlation.
\end{enumerate}
In all models, the true Markov blanket of $Y$ is $\{X_1, X_2, X_3\}$. Table~\ref{table:corr_simulations}
summarizes results over $1000$ Monte Carlo replications, reporting:
\begin{enumerate}
    \item the proportion of exact recovery of $\{X_1, X_2, X_3\}$,
    \item the average number of truly selected variables,
    \item the average number of falsely selected variables.
\end{enumerate}
Since SIS methods require a pre-specified model size, the total number of selected
variables is fixed at three in these experiments. Consequently, the inclusion and exact
recovery rates coincide, and we report the average numbers of true and false
selections. 

Table~\ref{table:corr_simulations} shows that the presence of collinearity between signal
and noise variables substantially degrades the performance of DCSIS and BCORSIS, which
primarily capture the strongest marginal signal $X_1$ along with correlated variables. The convergence of the average number of truly selected variables to one and
the average number of falsely selected variables to two indicates that SIS methods tend
to select only the strongest signal and its correlated variables. In contrast, FORD and FOCI exploit joint dependence: at each step, they
account for dependence already explained by previously selected variables, enabling them
to identify additional unexplained signals and more accurately recover the true Markov blanket.   
\begin{table}[ht]
    \centering
    \tiny
    \setlength{\tabcolsep}{3pt} 
    \renewcommand{\arraystretch}{1.2}
    \begin{tabular}{lccccc}
        \toprule
         &  & FORD & FOCI & DCSIS & BCORSIS\\
         Models & n & exact/avg.true./avg.false.  & exact/avg.true./avg.false.  &
         exact/avg.true./avg.false.  &
         exact/avg.true./avg.false. \\
        \midrule
        LM      & 100  & \textbf{0.564}/\textbf{2.552}/\textbf{0.406} & 0.233/2.129/0.796 & 0.395/2.391/0.609 & 0.148/2.105/0.895 \\
        LM      & 500  & \textbf{1.000}/\textbf{3.000}/\textbf{0.000} & 0.996/2.996/0.004 & 0.997/2.997/0.003 & 0.878/2.878/0.122 \\
        LM      & 1000 & \textbf{1.000}/\textbf{3.000}/\textbf{0.000} & \textbf{1.000}/\textbf{3.000}/\textbf{0.000} & \textbf{1.000}/\textbf{3.000}/\textbf{0.000} & 0.995/2.995/0.005 \\
        \midrule
        LM-corr & 100  & 0.003/\textbf{1.173}/\textbf{1.725} & \textbf{0.004}/0.617/2.108 & 0.000/1.000/2.000 & 0.000/1.000/2.000 \\
        LM-corr & 500  & \textbf{0.313}/\textbf{2.313}/\textbf{0.659} & 0.152/2.147/0.840 & 0.000/1.000/2.000 & 0.000/1.000/2.000 \\
        LM-corr & 1000 & \textbf{0.708}/\textbf{2.708}/\textbf{0.275} & 0.412/2.412/0.577 & 0.000/1.000/2.000 & 0.000/1.000/2.000 \\
        \midrule
        Nonlin2 & 100  & \textbf{0.033}/0.293/\textbf{1.818} & 0.006/0.146/1.917 & 0.009/0.687/2.313 & 0.011/\textbf{1.089}/1.911 \\
        Nonlin2 & 500  & 0.285/1.236/1.153 & 0.133/0.619/1.535 & \textbf{0.737}/\textbf{2.613}/\textbf{0.387} & 0.213/2.026/0.974 \\
        Nonlin2 & 1000 & 0.545/2.023/0.615 & 0.270/1.143/1.160 & \textbf{0.941}/\textbf{2.884}/\textbf{0.116} & 0.752/2.747/0.253 \\
        \midrule
        Nonlin2-corr  & 100  & \textbf{0.001}/0.434/\textbf{1.940} & 0.000/0.262/2.134 & 0.000/0.879/2.121 & 0.000/0.069/2.931 \\
        Nonlin2-corr & 500  & 0.016/\textbf{1.819}/\textbf{1.094} & 0.012/1.615/1.203 & \textbf{0.021}/1.210/1.790 & 0.000/0.983/2.017 \\
        Nonlin2-corr & 1000 & 0.030/\textbf{2.027}/\textbf{0.971} & 0.024/1.997/0.992 & \textbf{0.130}/1.808/1.192 & 0.000/1.000/2.000 \\
        \midrule
        Osc2  & 100  & 0.101/0.710/1.485 & 0.029/0.268/1.848 & 0.000/0.190/2.810 & \textbf{0.116}/\textbf{1.772}/\textbf{1.228} \\
        Osc2  & 500  & 0.704/2.424/0.363 & 0.203/0.828/1.308 & 0.037/1.314/1.686 & \textbf{0.997}/\textbf{2.997}/\textbf{0.003} \\
        Osc2  & 1000 & 0.923/2.904/0.085 & 0.332/1.346/1.036 & 0.247/1.944/1.056 & \textbf{1.000}/\textbf{3.000}/\textbf{0.000} \\
        \midrule
        Osc2-corr & 100  & 0.013/0.926/\textbf{1.816} & 0.006/0.726/1.914 & \textbf{0.022}/\textbf{1.178}/1.822 & 0.015/1.158/1.842 \\
        Osc2-corr & 500  & \textbf{0.183}/\textbf{2.183}/\textbf{0.812} & 0.036/2.025/0.975 & 0.032/1.237/1.763 & 0.018/1.227/1.773 \\
        Osc2-corr & 1000 & \textbf{0.422}/\textbf{2.422}/\textbf{0.578} & 0.061/2.061/0.939 & 0.006/1.071/1.929 & 0.003/1.100/1.900 \\
        \bottomrule
    \end{tabular}
    \caption{Variable selection performance across $1000$ Monte Carlo replications. For each method, we report (i) the proportion of exact recovery of the true Markov blanket $\{X_1, X_2, X_3\}$, (ii) the average number of truly selected variables, and (iii) the average number of falsely selected variables. The highest exact recovery rate for each model and sample size is highlighted in bold. Because SIS-based procedures require a fixed model size, the total number of selected variables does not vary; consequently, the inclusion rate and exact recovery rate coincide, and we therefore report the average number of truly selected variables rather than the inclusion rate. Models described in Example~\ref{exVar_screening}.}
    \label{table:corr_simulations}
\end{table}
\end{ex}

\subsection{Real Data Examples}
\label{subsec:realdata}
\begin{ex}\label{exUCI}(variable selection) In this example, we evaluate the performance of FORD on three real-world datasets from the UCI Machine Learning Repository, comparing it with existing approaches such as FOCI \cite{azadkia2021simple} and KFOCI~\cite{deb2022kernelpartialcorrelation} using \texttt{R} package \texttt{KPC}~\cite{KPCpackage} (using the default exponential kernel with median bandwidth and 1-nearest neighbour). For each dataset, we describe the train-test split, explain the variables involved, and provide relevant contextual information.

\begin{enumerate}
    \item \textit{Superconductivity:}  The dataset is randomly split into $70\%$ for training and $30\%$ for testing. It comprises $81$ features extracted from $21263$ superconductors, with the \textit{critical temperature} as the target variable (last column). The remaining covariates capture various chemical and thermodynamic properties of the superconductors, provided in both raw and weighted forms. The weighted features include the weighted mean, geometric mean, entropy, range, and standard deviation of the corresponding properties. The primary objective is to predict the critical temperature based on these features. This dataset was introduced and analysed in \cite{RealSP} and is publicly available from the UCI Machine Learning Repository\footnote{\url{https://archive.ics.uci.edu/dataset/464/superconductivty+data}}.

    \item \textit{Wave Energy Converter:}  
    The dataset is randomly split into $70\%$ for training and $30\%$ for testing. It contains the positions and absorbed power outputs of wave energy converters (WECs) operating under real wave conditions off the southern coast of Australia, near Tasmania. The dataset consists of $72000$ samples and includes 32 features representing the positions of the WECs, denoted as $X_1, X_2, \dots, X_{16}$ and $Y_1, Y_2, \dots, Y_{16}$, along with 16 features corresponding to the absorbed power outputs, denoted as $P_1, P_2, \dots, P_{16}$. The target variable, \textit{Powerall}, represents the total power output of the WEC farm. The goal is to predict the total power output based on the individual positions and power outputs of the converters. This dataset and its applications were discussed in \cite{RealWEC} and are publicly available through the UCI Machine Learning Repository\footnote{\url{https://archive.ics.uci.edu/dataset/494/wave+energy+converters}}.

    \item \textit{Lattice Physics:}  
    The dataset consists of a training set with $23999$ observations and a test set with $359$ observations. Each observation corresponds to a distinct fuel enrichment configuration for a NuScale US600 fuel assembly of type C-01 (NFAC-01). The dataset includes 39 features representing U-235 enrichment levels (ranging from 0.7 to 5.0 weight percent) for fuel rods located within a one-eighth symmetric segment of the assembly. The response variable of interest is the infinite multiplication factor (\textit{k-inf}), calculated using the MCNP6 Monte Carlo simulation code. The objective is to predict \textit{k-inf} based on the enrichment levels of the fuel rods. This dataset was generated and described in \cite{RealLP} and is publicly available through the UCI Machine Learning Repository\footnote{\url{https://archive.ics.uci.edu/dataset/1091/lattice-physics+(pwr+fuel+assembly+neutronics+simulation+results}}.
\end{enumerate}

\begin{table}[ht]
\centering
\small
\begin{tabular}{lccccccc}
\toprule
 & \multicolumn{2}{c}{Superconductivity} & \multicolumn{2}{c}{Wave Energy Converter} & \multicolumn{2}{c}{Lattice Physics} \\
\cmidrule(r){2-3} \cmidrule(r){4-5} \cmidrule(r){6-7} 
& Subset size & MSPE & Subset size & MSPE & Subset size & MSPE \\
\midrule
FOCI   & 8  & $106.27$  & 31   & $1.76\times10^9$  & 20  & $1.53\times10^{-4}$  \\
KFOCI  & 11  & $106.53$ & 28  & $5.18\times10^9$  & 6  & $1.53\times10^{-4}$ \\
FORD   & 15  & $\mathbf{97.92}$  & 28  & $\mathbf{1.75\times10^9}$  & 20  & $\mathbf{1.51\times10^{-4}}$ \\
\midrule
Random Forest & - & $92.72$ & - & $2.02\times10^9$ & - & $1.54\times10^{-4}$\\
\bottomrule
\end{tabular}
\caption{Performance comparison of FORD, KFOCI, and FOCI on three datasets, using the MSPE of a random forest fitted with the variables selected by each method. Data described in Example~\ref{exUCI}.}
\label{tab:real1}
\end{table}

For each dataset, we compared the performance of FORD with two competing methods: FOCI and KFOCI (the latter using the default exponential kernel with median bandwidth and 1-nearest neighbour). Following variable selection via each method’s respective stopping rule, the selected subsets were used to train predictive models on the training data using random forests implemented in the \texttt{randomForest} package~\cite{rfpackage} in \texttt{R}. Mean squared prediction errors (MSPEs) were then estimated on the test set. Table~\ref{tab:real1} reports the sizes of the selected subsets along with their corresponding MSPEs. The final row of Table~\ref{tab:real1} shows the performance of a random forest model trained on the full set of variables. In all cases, FORD achieved prediction accuracy comparable to that of FOCI and KFOCI; only in the \textit{Superconductivity} dataset with the full model yield a lower MSPE.
\begin{figure}[ht]
  \centering
  \includegraphics[width=0.7\linewidth]{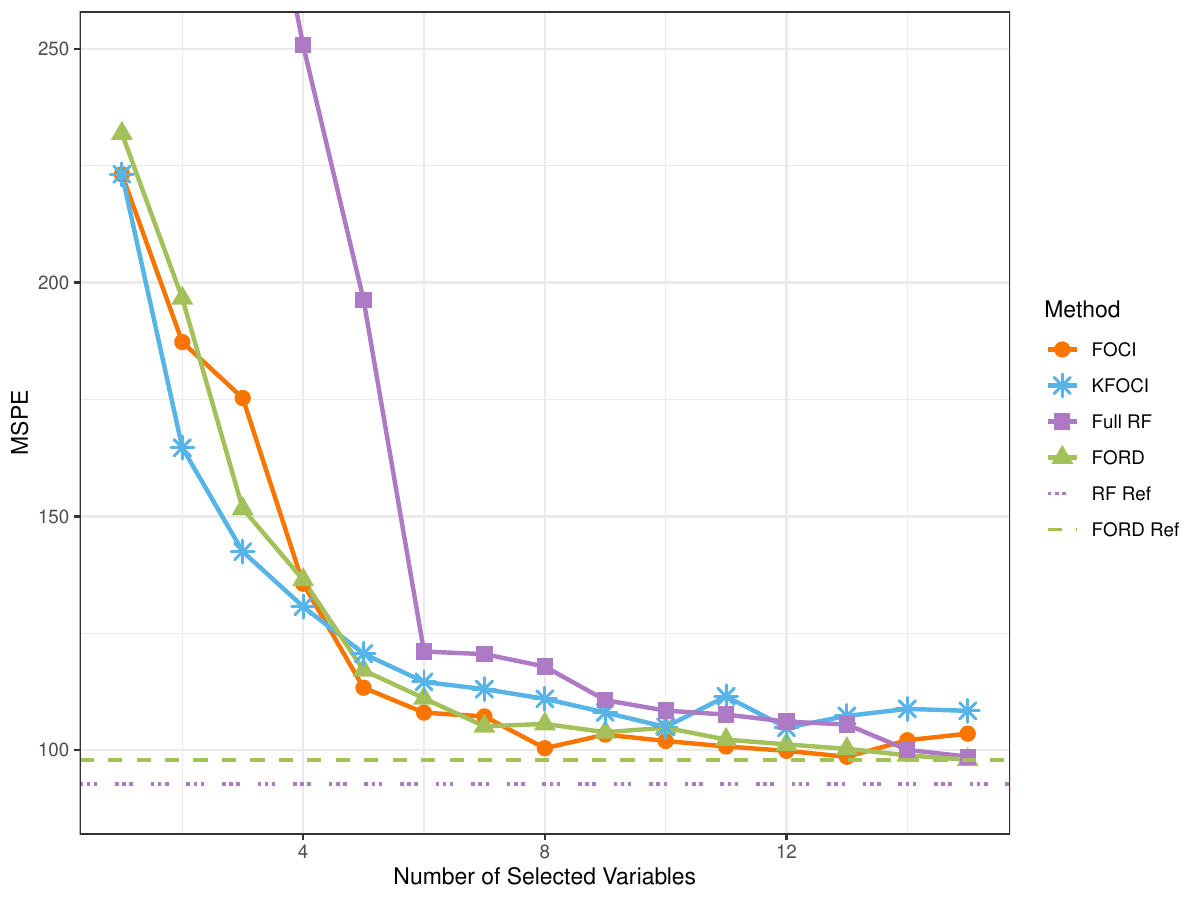}
\caption{Comparison of MSPE as a function of the number of selected variables on the Superconductivity dataset, using variable selection methods FOCI, FORD, and KFOCI, each followed by a random forest trained on the selected variables. The \textit{Full RF} curve represents a random forest model trained on the top-$k$ variables ($k \in \{1, \ldots, 15\}$) ranked by variable importance from a random forest using all features. Dashed and dotted horizontal lines indicate the baseline MSPEs for the initial FORD model and the full random forest model (using all variables), respectively. The results illustrate the advantage of targeted variable selection in reducing model complexity while maintaining or improving predictive performance. Data described in Example~\ref{exUCI}.}
\label{fig:rf_study}
\end{figure}
Since each of these variable selection methods results in a set with possibly different sizes, we compare the performance of the ordered subsets by comparing the MSPE of the fitted random forest on the first $k$ selected variables for $k\in\{1, \ldots, 15\}$. Figure~\ref{fig:rf_study} shows the MSPEs for all these models. 
\end{ex}

\subsubsection{Comparison to Chatterjee's Correlation Coefficient}\label{sec:comparision_with_xi}
To further explain the distinction between the measures $\nu$ and $T$, it is instructive to compare their respective estimators $\nu_n^{\text{1-dim}}$ and $\xi_n$. Suppose there are no ties among the sample observations $X_i$ and $Y_i$. Under this assumption, the estimators can be expressed as
\begin{align*}
   \nu_n^{\text{1-dim}}(Y, X) 
   &= 1 - \sum_{i=1}^{n-1} \sum_{\substack{j \ne i,\, i+1 \\ r_j \ne 1,\, n}} w_{\nu_{n}^{\text{1-dim}}, j} \bone \{r_j \in \mathcal{K}_i\},\\
   \xi_n(Y, X) 
   &= 1 - \sum_{i=1}^{n-1} \sum_{j \ne i} w_{\xi_n} \bone\{r_j \in \mathcal{K}_i\},
\end{align*}
where the weights are given by $w_{\nu_{n}^{\text{1-dim}}, j} = 1/\{2(r_j - 1)(n - r_j)\}$ and $w_{\xi_n} = 3/(n^2 - 1)$. This formulation emphasizes the fundamental distinction in how the two statistics assign weight to rank oscillations.

For $n \geq 5$, the inequality $w_{\xi_n} \geq w_{\nu_{n}^{\text{1-dim}}, j}$ holds precisely when
\begin{align*}
    r_j \in L_n := \left[\frac{n+1 - \sqrt{(n-1)(n-5)/3}}{2}, \frac{n+1 + \sqrt{(n-1)(n-5)/3}}{2}\right],
\end{align*}
and the $w_{\xi_n} < w_{\nu_{n}^{\text{1-dim}}, j}$ otherwise. Thus, for any rank oscillation interval $\mathcal{K}_i$ containing $r_j \in L_n$, the statistic $\xi_n$ imposes a greater penalty—interpreted in terms of deviation from independence-than does $\nu_n^{\text{1-dim}}$. In general, the weight ratio satisfies
\begin{align*}
    \frac{w_{\nu_{n}^{\text{1-dim}}, j}}{w_{\xi_n}} \geq \frac{2}{3}.
\end{align*}
However, this ratio does not admit a uniform upper bound; instead, its maximal value grows asymptotically as $n/6$. Consequently, when $r_j \notin L_n$, the estimator $\nu_n^{\text{1-dim}}$ penalises the corresponding rank oscillation more heavily than $\xi_n$, with the disparity increasing with the sample size $n$.

In the following example, we consider the Yeast gene expression data analyzed in~\cite{chatterjee2021new} and examine how this difference manifests in the identification of genes with oscillating transcript levels over time.
\begin{ex}\label{exYeast}(yeast gene expression data)
We follow the Yeast gene expression example in~\cite{chatterjee2021new} and investigate the effectiveness of $\nu_{n}^{\text{1-dim}}(Y, X)$ in identifying genes with oscillating transcript levels over time. Specifically, we apply it to the curated \texttt{Spellman} dataset available in the \texttt{R} package \texttt{minerva}, which contains gene expression data for $4381$ transcripts measured at $23$ time points. In this context, $Y$ denotes the transcript level of a gene, while $X$ represents the time of recording. 
\begin{figure}[ht]
  \centering
    \begin{subfigure}[b]{0.49\textwidth}
    \includegraphics[width=\linewidth]{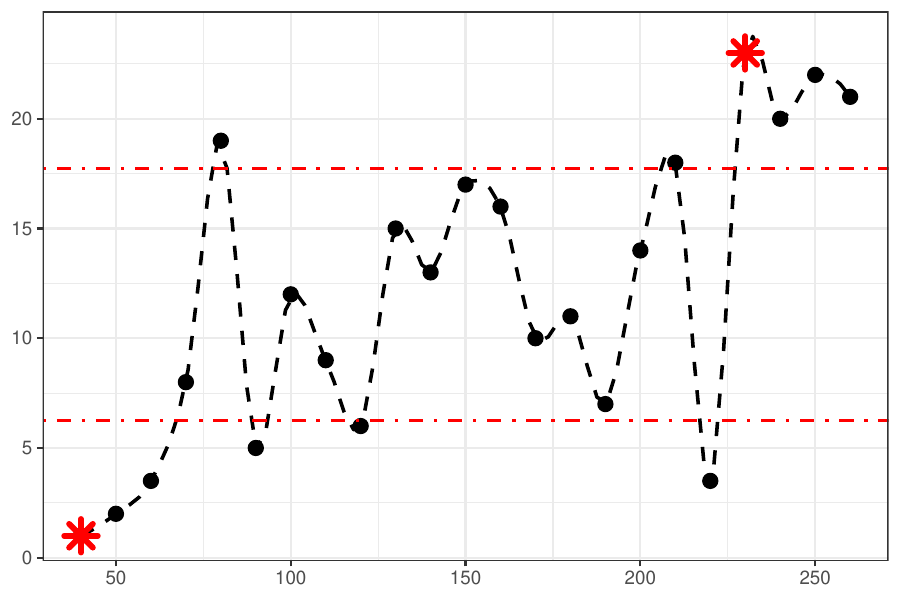}
    \caption*{\scalebox{0.99}{\tiny YDR148C: $q_{\xi_n} = 0.0554,\ q_{\nu_n^{\text{1-dim}}} = 0.0140$}}
  \end{subfigure}
  \hfill
  \begin{subfigure}[b]{0.49\textwidth}
    \includegraphics[width=\linewidth]{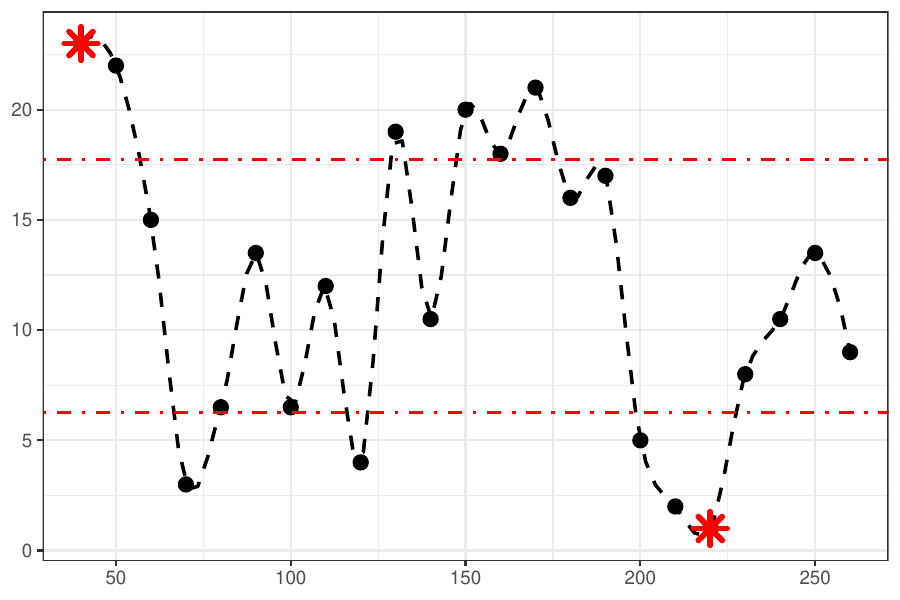}
    \caption*{\scalebox{0.99}{\tiny YOR038C: $q_{\xi_n} = 0.0810,\ q_{\nu_n^{\text{1-dim}}} = 0.0178$}}
  \end{subfigure}
  
   \vspace{1cm}
   
    \begin{subfigure}[b]{0.49\textwidth}
    \includegraphics[width=\linewidth]{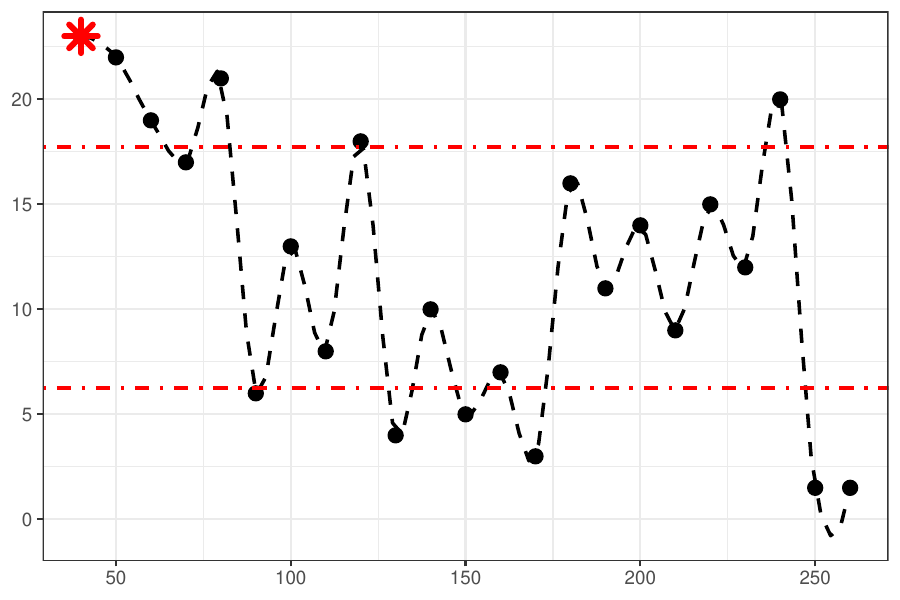}
    \caption*{\scalebox{0.99}{\tiny YHR216W: $q_{\xi_n} = 0.2106,\ q_{\nu_n^{\text{1-dim}}} = 0.0312$}}
  \end{subfigure}
  \hfill
    \begin{subfigure}[b]{0.49\textwidth}
    \includegraphics[width=\linewidth]{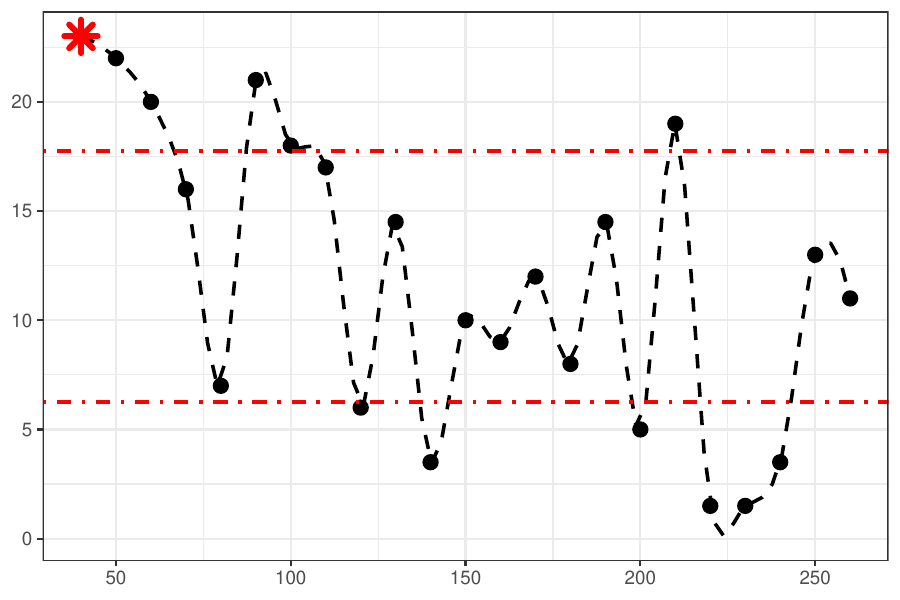}
    \caption*{\scalebox{0.99}{\tiny YKL144C: $q_{\xi_n} = 0.2251,\ q_{\nu_n^{\text{1-dim}}} = 0.0465$}}
  \end{subfigure}
  \vspace{0.5cm}
\caption{
Plots of four genes detected by $\nu_{n}^{\text{1-dim}}$ but not by $\xi_n$, the first row figures are selected based on the smallest q-values under $\nu_{n}^{\text{1-dim}}$ and the second row figures are selected based on the largest q-values under $\xi_n$. The vertical axis shows the gene expression ranks, and the horizontal axis represents time. Ranks 1 and 23 are marked with red stars. The region between the two horizontal red dot-dashed lines indicates where $w_{\xi_n}$ exceeds $w_{\nu_{n}^{\text{1-dim}}, j}$. A LOESS regression curve (black dashed line) is overlaid using a smoothing parameter of 0.2.
}
\label{fig:nu-only-genes}
\end{figure}
\begin{figure}[ht]
  \centering
  
  \begin{subfigure}[b]{0.49\textwidth}
    \includegraphics[width=\linewidth]{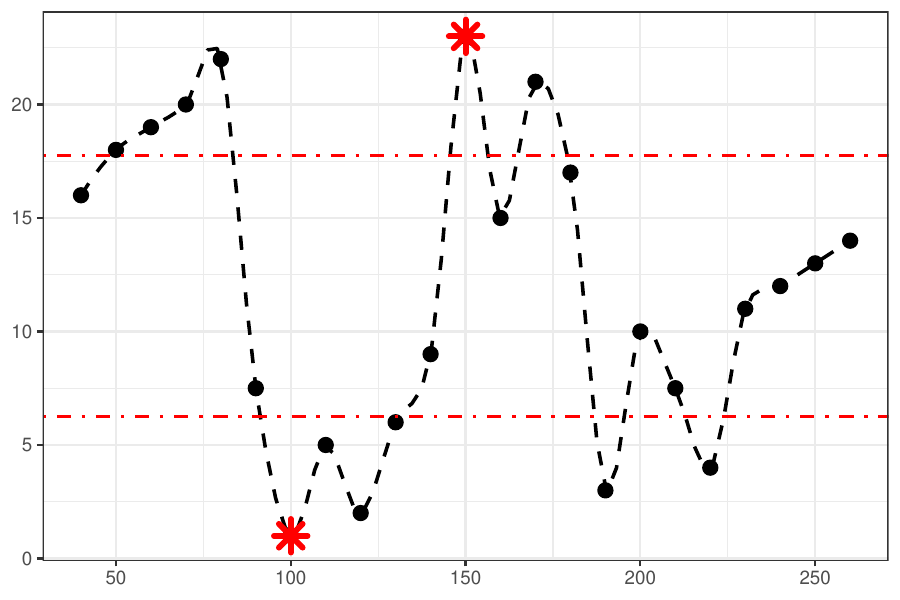}
    \caption*{\scalebox{0.99}{\tiny YKL001C: $q_{\xi_n} = 0.0138,\ q_{\nu_n^{\text{1-dim}}} = 0.0598$}}
  \end{subfigure}
  \hfill
  \begin{subfigure}[b]{0.49\textwidth}
    \includegraphics[width=\linewidth]{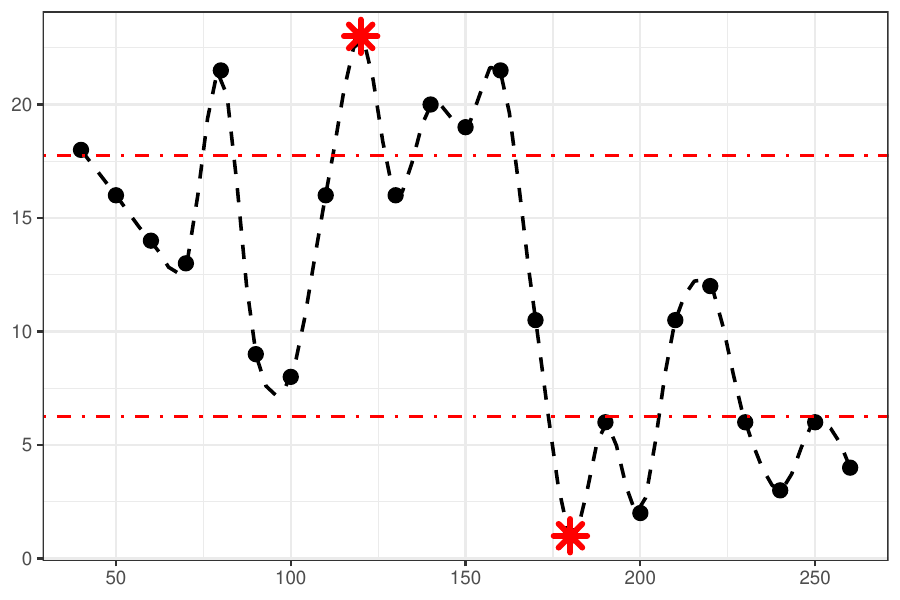}
    \caption*{\scalebox{0.99}{\tiny YGL063W: $q_{\xi_n} = 0.0152,\ q_{\nu_n^{\text{1-dim}}} = 0.0718$}}
  \end{subfigure}
  
  \vspace{1cm}
  
  \begin{subfigure}[b]{0.49\textwidth}
    \includegraphics[width=\linewidth]{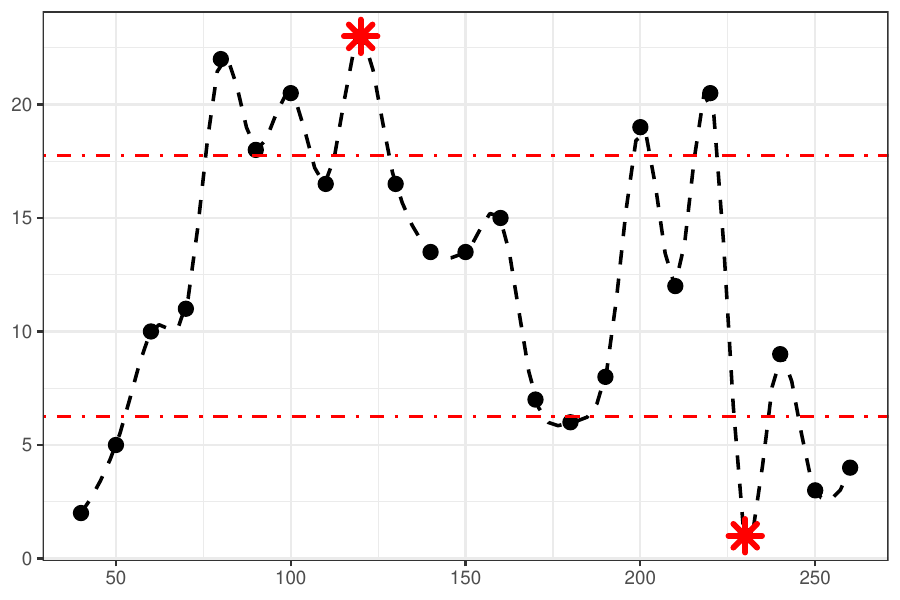}
    \caption*{\scalebox{0.99}{\tiny YKL056C: $q_{\xi_n} = 0.0423,\ q_{\nu_n^{\text{1-dim}}} = 0.1105$}}
  \end{subfigure}
  \hfill
   \begin{subfigure}[b]{0.49\textwidth}
    \includegraphics[width=\linewidth]{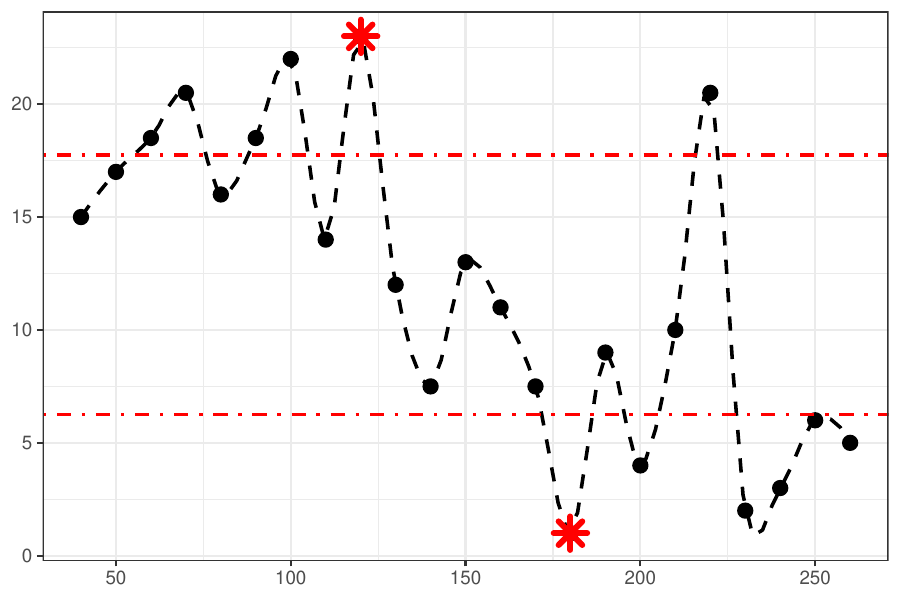}
    \caption*{\scalebox{0.99}{\tiny YDR483W: $q_{\xi_n} = 0.0408,\ q_{\nu_n^{\text{1-dim}}} = 0.1240$}}
  \end{subfigure}
  \vspace{0.5cm}
  \caption{
Plots of four genes detected by $\xi_n$ but not by $\nu_{n}^{\text{1-dim}}$, the first row figures are selected based on the smallest q-values under $\xi_n$ and the second row figures are selected based on the largest q-values under $\nu_{n}^{\text{1-dim}}$. The vertical axis represents gene expression ranks, and the horizontal axis represents time. Ranks 1 and 23 are marked with red stars. The region between the two horizontal red dot-dashed lines indicates where $w_{\xi_n}$ exceeds $w_{\nu_{n}^{\text{1-dim}}, j}$. A LOESS regression curve (black dashed line) is fitted using a smoothing parameter of 0.2.
}
\label{fig:xi-only-genes}
\end{figure}

To identify the genes whose transcript levels exhibit oscillatory patterns, we conduct a permutation test on the dependence measures $\nu_{n}^{\text{1-dim}}$ and $\xi_n$ using $10000$ replications. Genes with significantly large values of these dependence measures are identified as having time-dependent expression patterns, as determined by an independence-based permutation test. For both statistics, p-values are computed and the Benjamini–Hochberg procedure~\cite{benjamini1995controlling} is applied to control the false discovery rate (FDR) at the $0.05$ level. We refer to the adjusted p-values using Benjamini–Hochberg procedure as q-values.

As a result, out of $4381$ genes, $685$ are found to be significant using $\nu_{n}^{\text{1-dim}}$. Among these, $78$ genes are uniquely detected by $\nu_{n}^{\text{1-dim}}$ and not by $\xi_n$. Conversely, $\xi_n$ detects $679$ significant genes, of which $72$ are not detected by $\nu_{n}^{\text{1-dim}}$. This slight discrepancy suggests that $\nu_{n}^{\text{1-dim}}$ may have an edge in identifying certain types of dependence patterns.

Figure~\ref{fig:nu-only-genes} illustrates four gene expression patterns exclusively detected by $\nu_{n}^{\text{1-dim}}$. Specifically, the first row of Figure~\ref{fig:nu-only-genes} presents the two genes with the smallest q-values under $\nu_{n}^{\text{1-dim}}$ among those not identified by $\xi_n$, highlighting cases where $\nu_{n}^{\text{1-dim}}$ shows strong confidence in detection. The second row of Figure~\ref{fig:nu-only-genes} displays two genes selected by $\nu_{n}^{\text{1-dim}}$ but not by $\xi_n$, which exhibit the largest q-values under $\xi_n$. Both figures support the observation that when oscillations occur around mid-range rank values—where $w_{\xi_n} \geq w_{\nu_{n}^{\text{1-dim}}, j}$—$\nu_{n}^{\text{1-dim}}$ is more effective at capturing dependencies than $\xi_n$.

On the other hand, Figure~\ref{fig:xi-only-genes} displays gene expression patterns detected by $\xi_n$ but not by $\nu_{n}^{\text{1-dim}}$. The first row of Figure~\ref{fig:xi-only-genes} presents the two genes with the smallest q-values under $\xi_n$ among those not identified by $\nu_{n}^{\text{1-dim}}$, highlighting cases where $\xi_n$ showed strong confidence in selection. The second row of Figure~\ref{fig:xi-only-genes} shows two genes selected by $\xi_n$ and not by $\nu_{n}^{\text{1-dim}}$ that have the largest q-values under $\nu_{n}^{\text{1-dim}}$. 

In conclusion, it seems $\nu_{n}^{\text{1-dim}}$ excels at detecting smooth, mid-rank oscillatory patterns, whereas $\xi_n$ is more sensitive to sharp transitions at the extremes. Independence testing using the respective asymptotic distributions—established for $\xi_n$ and conjectured for $\nu_{n}^{\text{1-dim}}$—further supports the advantage of $\nu_{n}^{\text{1-dim}}$, which identified 677 genes compared to 586 by $\xi_n$. Among these 586 genes, only 39 were not detected by $\nu_{n}^{\text{1-dim}}$.
\end{ex}

\clearpage
\section*{Acknowledgement}
We are grateful to Sourav Chatterjee and Rina Foygel Barber for helpful comments. Part of this work was conducted during M.A.'s visit to the Institute for Mathematical and Statistical Innovation (IMSI), which is supported by the National Science Foundation under Grant No. DMS-1929348.

\section{Proofs}\label{sec:proof}
\subsection{Proof of Theorem~\ref{thm:NuProperties}}

\begin{proof}
    Remember that $S$ is the support of $\mu$ and we define $\tilde{\mu}$, the modified version of $\mu$, in the following way: If $S$ attains a maximum $s_{\max}$, let $\tilde{S} = S\setminus \{s_{\max}\}$ otherwise let $\tilde{S} = S$, and for any measurable set $A\subseteq S$, let $\tilde{\mu}(A) = \mu(A\cap\tilde{S}\})/\mu(\tilde{S})$. In addition, for simplicity in notation, since $\var(\ee[\bone\{Y > t\}\mid\bbx]) = 0$ whenever $\var(\bone\{Y > t\}) = 0$ we define $\var(\ee[\bone\{Y > t\}\mid\bbx])/\var(\bone\{Y > t\})$ to be equal to 1.
    
    Assuming that $Y$ is not almost surely a constant guarantee that for almost all values of $t$ with respect to $\tilde{\mu}$, $\var(\bone\{Y > t\})$ is non-zero and hence $\nu(Y, \bbx)$ is well-defined. Note that by the law of total variance and non-negativity of variance, we have
    \[
    0\leq \var(\ee[\bone\{Y > t\}\mid \bbx]) \leq \var(\bone\{Y > t\}),
    \]
    which gives $\nu(Y, \bbx) \in [0, 1]$. 

    When $Y$ is independent of $\bbx$ for all $t\in\rr$ we have 
    \[
    \ee[\bone\{Y > t\}\mid \bbx] = \ee[\bone\{Y > t\}],
    \]
    therefore $\var(\ee[\bone\{Y > t\}\mid \bbx]) = 0$ which gives $\nu(Y, \bbx) = 0$. 
    
    For each $t$ let $G(t) := \pp(Y > t)$, and $G_\bbx(t) := \pp(Y > t\mid \bbx)$. Note that $\nu(Y, \bbx) = 0$ implies that there exists a Borel set $A\subseteq \rr$ such that $\tilde{\mu}(A) = 1$ and for any $t\in A$, $\var(G_\bbx(t)) = 0$. This implies that for $t\in A$, $G_\bbx(t) = G(t)$ almost surely with respect to $\tilde{\mu}$. We claim that $A = \rr$. 

    Take any $t\in\rr$. If $\tilde{\mu}(\{t\}) > 0$, then $t\in A$. So w.l.o.g assume that $\tilde{\mu}(\{t\}) = 0$. Note that this also implies $\mu(t) = 0$, unless $t = s_{\max}$. We also have $\var(G(s_{\max})) = \var(G_\bbx(s_{\max})) = 0$ which implies $s_{\max}\in A$. Therefore, for any other such $t$, $\mu(t) = 0$. This implies that $G$ is right-continuous at $t$.

    Suppose for all $s > t$ we have $G(s) < G(t)$. Then for each $s > t$, $\mu([t, s)) > 0$ and hence $A\cap[t, s)\neq \emptyset$. Therefore, there exists a sequence $r_n \in A$ such that $r_n \downarrow t$. Since $r_n\in A$, we have $G_\bbx(r_n) = G(r_n)$ almost surely for all $n$. Therefore with probability $1$ we have 
    \begin{eqnarray*}
        G_\bbx(t) \geq \lim_{n\rightarrow\infty} G_\bbx(r_n) = \lim_{n\rightarrow\infty} G(r_n) = G(t)
    \end{eqnarray*}
    because of the right-continuity of $G$. Note that $\ee[G_\bbx(t)] = G(t)$, hence this implies $G_\bbx(t) = G(t)$ almost surely and therefore $t\in A$. 

    Suppose there exist $s > t$ such that $G(s) = G(t)$. Take the largest such $s$, which exists because $G$ is left-continuous. If $s = \infty$, then $G(t) = G(s) = 0$. Since $\ee[G_\bbx(t)] = G(t) = 0$ this implies $G_\bbx(t) = G(t) = 0$ almost surely which implies $t\in A$. So assume $s < \infty$. Either $\mu(\{s\}) > 0$, which  implies $G_\bbx(s) = G(s)$ almost surely, or $\mu(\{s\}) = 0$ and $G(r) < G(s)$ for all $r > s$, which again implies $G_\bbx(s) = G(s)$ almost surely as in the previous paragraph. Therefore, in either case, with probability $1$, we have 
    \begin{eqnarray*}
        G_\bbx(t) \geq G_\bbx(s) = G(s) = G(t).
    \end{eqnarray*}
    Since $\ee[G_\bbx(t)] = G(t)$, this implies $G_\bbx(t) = G(t)$ almost surely. Therefore $t\in A$. This shows we can take $A$ as big as $\rr$. 

    Now, for an arbitrary Borel set $B\subseteq \rr$,
    \begin{eqnarray*}
        \pp(\{Y > t\}\cap\{\bbx\in B\}) &=& \ee[\ee[\bone\{Y > t\}\mid \bbx]\bone\{X\in B\}] \\
        &=& \ee[G_\bbx(t)\bone\{\bbx\in B\}] \\
        &=& \ee[G(t)\bone\{\bbx\in B\}] \\
        &=& G(t)\pp(\bbx\in B) \\
        &=& \pp(Y > t)\pp(\bbx\in B).
    \end{eqnarray*}
    This proves that $Y$ and $\bbx$ are independent.

    Assume there exists a measurable function $f:\rr^p\rightarrow\rr$ such that $Y = f(\bbx)$. This implies that for all $t\in\rr$, $\ee[\bone\{Y > t\}\mid \bbx] = \bone\{Y > t\}$ and therefore 
    \[
    \var(\ee[\bone\{Y > t\}\mid \bbx]) = \var(\bone\{Y > t\}).
    \]
    This gives $\nu(Y, \bbx) = 1$. On the other hand, assume $\nu(Y, \bbx) = 1$. This implies for almost all $t\in\rr$ w.r.t $\tilde{\mu}$ we have 
    \[
    \var(G_\bbx(t)) = \var(\bone\{Y > t\}).
    \]
    If $S$, the support of $\mu$ attains the minimum $s_{\max}$, then note that we also have
    \[
    \var(G_\bbx(s_{\max})) = \var(\bone\{Y > s_{\max}\}).
    \]
    This implies $\ee[\var(\bone\{Y > t\}\mid \bbx)] = \ee[G_\bbx(t)(1 - G_\bbx(t))] = 0$ for almost all $t$ with respect to $\mu$. Therefore, $G_\bbx(t)$ almost surely takes only the values of $0$ and $1$ with respect to $\mu$. Let $E$ ($\bbx$-measurable) the event that $G_\bbx(t) \in\{0, 1\}$ for almost all values of $t$ and note that $\pp(E) = 1$. Let $a_\bbx$ be the largest value such that $G_\bbx(a_\bbx) = 1$ and $b_\bbx$ be the smallest value such that $G_\bbx(b_\bbx) = 0$. Note that $a_\bbx\leq b_\bbx$. Suppose $\{a_\bbx < b_\bbx\}\cap E$ happens. This means that for all $t\in (a_\bbx, b_\bbx)$ we have $G_\bbx(t)\in (0, 1)$ therefore $\mu((a_\bbx, b_\bbx)) = 0$.
    Then we have $\pp(Y\in (a_\bbx, b_\bbx)\mid \bbx) = 0$ which implies event $\{a_\bbx < b_\bbx\}\cap E$ is of measure $0$ and hence $a_\bbx = b_\bbx$ almost surely. Then this gives us $Y = a_\bbx$ almost surely, which completes the proof. 
\end{proof}

\subsection{Proof of Theorem~\ref{thm:conditional}}
\begin{proof}
    If $Y$ is not almost surely equal to a measurable function of $\bbz$, Theorem~\ref{thm:NuProperties} gives us $\nu(Y, \bbz) < 1$, using this and the fact that by Theorem~\ref{thm:NuProperties} $\nu(Y, (\bbx, \bbz))$ and $\nu(Y, \bbz)$ are well-defined, $\nu(Y, \bbx\mid \bbz)$ is well-defined. Additionally 
    \[
    \nu(Y, (\bbx, \bbz)) - \nu(Y, \bbz) \leq 1 - \nu(Y, \bbz),
    \]
    and hence $\nu(Y, (\bbx, \bbz))\in[0, 1]$.

    Note that $\nu(Y, \bbx\mid \bbz) = 1$ if and only if $\nu(Y, (\bbx, \bbz)) = 1$ which happens if and only if $Y$ is a measurable function of $(\bbx, \bbz)$ which is equivalent to $Y$ being a measurable function of $\bbx$ given $\bbz$.

    Finally $\nu(Y, \bbx\mid \bbz) = 0$ if and only if $\nu(Y, (\bbx, \bbz)) = \nu(Y, \bbz)$. Note that 
    \begin{align*}
        \var(\ee[\bone\{Y > t\}\mid \bbx, \bbz]) = \var(\ee[\bone\{Y > t\}\mid \bbz]) + \ee[\var(\ee[\bone\{Y > t\}\mid\bbx, \bbz]\mid \bbz)]
    \end{align*}
    Since $\nu(Y, (\bbx, \bbz)) \geq \nu(Y, \bbz)$, equality happens if and only if for $\tilde{\mu}$ almost every $t$ we have 
    \begin{align*}
        \var(\ee[\bone\{Y > t\}\mid \bbx, \bbz]) = \var(\ee[\bone\{Y > t\}\mid \bbz]).
    \end{align*}
    Putting these together means $\ee[\var(\ee[\bone\{Y > t\}\mid\bbx, \bbz]\mid \bbz)] = 0$ for $\tilde{\mu}$ almost every $t$ which means $\var(\ee[\bone\{Y > t\}\mid\bbx, \bbz]\mid \bbz) = 0$ almost surely thus
    \[
        \ee[\bone\{Y > t\}\mid\bbx, \bbz] = \ee[\bone\{Y > t\}\mid \bbz],
    \]
    and hence $Y$ is independent of $\bbx$ given $\bbz$.
\end{proof}

\subsection{Proof of Theorem~\ref{thm:consistency}}
For more clarity in the notation of our proof, we rewrite the estimator $\nu_n$ in terms of the empirical cumulative function. Let 
\begin{eqnarray*}
    \mathcal{I}_i^j := [\min\{Y_i, Y_{N^{-j}(i)}\}, \max\{Y_i, Y_{N^{-j}(i)}\}].
\end{eqnarray*}
For each $j\in[n]$ and $t\in\rr$ let
\begin{eqnarray*}
    F_{n,j}(t) := (n - 1)^{-1}\sum_{k\neq j}\bone\{Y_k \leq t\}, \qquad F_{n}(t) := n^{-1}\sum_{k=1}^n\bone\{Y_k \leq t\}.
\end{eqnarray*}
Note that 
\begin{eqnarray*}
    R_j = n F_n(Y_j), \qquad F_{n,j}(Y_j) = \big(\frac{n}{n - 1}\big)F_n(Y_j) - \frac{1}{n-1} = \frac{R_j - 1}{n - 1}.
\end{eqnarray*}
Using these, we can rewrite $\nu_n(Y, \bbx)$ as
\begin{eqnarray*}
    \nu_n(Y, \bbx) = 1 - \frac{1}{2(n-1)(n - n_0)}\sum_{j=1}^n\sum_{i\neq j}\frac{\bone\{Y_j\in\mathcal{I}_i^j\}\bone\{F_n(Y_j)\neq 1, 1/n\}}{F_{n,j}(Y_j)(1 - F_{n,j}(Y_j))},
\end{eqnarray*}
where $n_0 = n_{\max} + c_{\min}$, with $n_{\max}$ and $c_{\min}$ defined as before: $n_{\max}$ number of $Y_j$'s that are equal to the maximum of $Y_i$'s and $c_{\min} = 1$ if $Y_j$'s minimum is unique and zero otherwise.  

\begin{proof}
Let 
\begin{align}
    Q_n &:= \frac{1}{2(n - 1)(n - n_0)}\sum_{j = 1}^n\sum_{i\neq j}\frac{\bone\{Y_j \in \mathcal{I}_i^j\}\bone\{F_{n}(Y_j)\neq 1, 1/n\}}{F_{n,j}(
    Y_j)(1 - F_{n,j}(Y_j))}, \label{eq:Qn}\\
    Q_n^\prime &:= \frac{1}{2(n - 1)(n - n_0)}\sum_{j = 1}^n\sum_{i\neq j}\frac{\bone\{Y_j \in \mathcal{I}_i^j\}\bone\{F_{n}(Y_j)\neq 1, 1/n\}}{F(
    Y_j)(1 - F(Y_j))}, \label{eq:Qpn}\\
    Q &:= \int\frac{\ee[F_\bbx(t)(1 - F_\bbx(t))]}{F(t)(1 - F(t))}d\tilde{\mu}(t). \label{eq:Q}
\end{align}

\begin{lmm}\label{lmm1} With $Q_n$ and $Q$ defined in \eqref{eq:Qn} and \eqref{eq:Q}
\begin{eqnarray*}
    \lim_{n\rightarrow\infty}\ee[Q_n] = Q.
\end{eqnarray*}
\end{lmm}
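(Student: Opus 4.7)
The plan is to prove $\ee[Q_n] \to Q$ in three stages: (i) replace $Q_n$ by the auxiliary quantity $Q_n'$ defined in \eqref{eq:Qpn}, showing $\ee|Q_n - Q_n'| \to 0$; (ii) compute $\ee[Q_n']$ exactly by conditioning on the covariates; (iii) pass to the limit using a nearest-neighbor convergence argument.

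For stage (i), the key observation is that the indicator $\bone\{F_n(Y_j) \neq 1, 1/n\}$ forces $R_j \in \{2, \ldots, n-1\}$, so $F_{n,j}(Y_j) = (R_j-1)/(n-1) \in [1/(n-1), (n-2)/(n-1)]$, and the empirical denominator $F_{n,j}(Y_j)(1 - F_{n,j}(Y_j))$ is uniformly bounded below at the scale $1/(n-1)$. Combining this with the Dvoretzky--Kiefer--Wolfowitz inequality to control $\sup_t |F_{n,j}(t) - F(t)| = O_\pp(n^{-1/2})$ and a truncation argument (splitting according to whether $\min(F(Y_j), 1 - F(Y_j))$ exceeds a small threshold $\delta$ and then letting $\delta \to 0$), one shows the replacement error is negligible in expectation.

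For stage (ii), fix $i \neq j$ and condition on $(\bbx_1, \ldots, \bbx_n, Y_j)$. Since $Y_i$ and $Y_{N^{-j}(i)}$ are then conditionally independent with respective CDFs $F_{\bbx_i}$ and $F_{\bbx_{N^{-j}(i)}}$,
\begin{align*}
\pp\bigl(Y_j \in \mathcal{I}_i^j \bigm| \bbx_1, \ldots, \bbx_n, Y_j\bigr) &= F_{\bbx_i}(Y_j)\bigl(1 - F_{\bbx_{N^{-j}(i)}}(Y_j^-)\bigr) \\
&\quad + \bigl(1 - F_{\bbx_i}(Y_j^-)\bigr) F_{\bbx_{N^{-j}(i)}}(Y_j),
\end{align*}
which in the continuous case simplifies to $F_{\bbx_i}(Y_j) + F_{\bbx_{N^{-j}(i)}}(Y_j) - 2 F_{\bbx_i}(Y_j) F_{\bbx_{N^{-j}(i)}}(Y_j)$. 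Summing over $i$ and using exchangeability of the indices $\{i : i \neq j\}$ reduces the computation to $n-1$ copies of a single-index expectation.

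For stage (iii), invoke the nearest-neighbor convergence fact from \cite{azadkia2021simple}: $\|\bbx_{N^{-j}(i)} - \bbx_i\| \to 0$ almost surely as $n \to \infty$. Approximating the bounded measurable map $\bbx \mapsto F_\bbx(t)$ in $L^1$ (with respect to the law of $\bbx$) by continuous functions and applying this fact yields $\ee[F_{\bbx_i}(t) F_{\bbx_{N^{-j}(i)}}(t)] \to \ee[F_\bbx(t)^2]$ for $\mu$-a.e.\ $t$; hence $\sum_{i \neq j} \pp(Y_j \in \mathcal{I}_i^j \mid Y_j = t) \sim 2(n-1) \ee[F_\bbx(t)(1 - F_\bbx(t))]$. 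Dividing by the normalization $2(n-1)(n-n_0)$ and recognizing that $(n-n_0)/n \to \mu(\tilde{S})$ by the law of large numbers, so that the empirical average over surviving indices $\{j : F_n(Y_j) \neq 1, 1/n\}$ converges to integration against $\tilde{\mu}$, one recovers $Q$. The main obstacle is stage (i): the singular denominator can interact badly with the tails of $Y$, since for small $R_j$ the empirical and population CDFs may differ by large multiplicative factors, and a careful decomposition based on $R_j$ (treating bounded ranks separately from the bulk) is needed to keep the replacement error $o(1)$.
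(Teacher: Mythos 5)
Your overall architecture matches the paper's: pass from $Q_n$ to $Q_n'$, then take $\ee[Q_n']\to Q$ via nearest-neighbour convergence. Stage (i) is workable as you describe it: plain DKW on the bulk $\{F(t)(1-F(t))\ge\delta\}$ plus the crude tail bound $\ee[\bone\{Y_j\in\mathcal{I}_i^j\}\mid Y_j=t]\le \min\{2F(t),2(1-F(t^-))\}$ gives $\limsup_n\ee\abs{Q_n-Q_n'}\le C\delta$ for every $\delta$, which suffices for the lemma. (The paper instead uses a self-normalized empirical-process inequality of Bartl--Mendelson, $\abs{F_m(t)-F(t)}\lesssim\sqrt{F(t)(1-F(t))\Delta}$, which is what later delivers the explicit rate in Theorem 4.1; your route loses the rate but not the limit.) Stage (ii) is correct modulo the overlap term $\pp(Y_i=t)\pp(Y_{N^{-j}(i)}=t)$ when $Y$ has atoms, which is minor.

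The genuine gap is in stage (iii). From a.s.\ convergence of $\ee[\bone\{Y_j\in\mathcal{I}_i^j\}\mid Y_j,\bbx_i,\bbx_{N^{-j}(i)}]$ you cannot simply "recover $Q$": you must interchange the limit with an expectation of the random quantity $\frac{n}{n-n_0}\cdot\frac{\ee[\bone\{Y_j\in\mathcal{I}_i^j\}\mid\cdots]\,\bone\{F_n(Y_j)\neq 1,1/n\}}{F(Y_j)(1-F(Y_j))}$, whose denominator is unbounded and whose normalization $n/(n-n_0)$ is itself random. You flag the singular denominator as an obstacle only in stage (i), but it is equally an obstacle here, and your proposal offers no domination or uniform-integrability argument for it. This is exactly where the paper spends most of its effort: it shows $\bigl(\tfrac{n-n_0}{n}\bigr)Q_n'$ is uniformly integrable by conditioning on the rank $F_n(Y_j)=r/n$ and using that $F(Y_j)\sim\mathrm{Beta}(r,n-r+1)$ in the continuous case, and by a separate four-case analysis when $\mu$ has atoms at the extremes of its support (where $1-F(t)$ can vanish on a set of positive $\mu$-measure arbitrarily close to $s_{\max}$); it then needs Vitali's theorem, not dominated convergence, to handle the factor $n/(n-n_0)-\mu(\tilde{S})^{-1}$. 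In the special case where $Y$ is continuous your argument closes easily, since then $\ee[\bone\{Y_j\in\mathcal{I}_i^j\}\mid Y_j=t]/(F(t)(1-F(t)))\le 2/\max\{F(t),1-F(t)\}\le 4$ and DCT applies; but the lemma is stated with no assumptions on the law of $Y$, and for general $\mu$ the uniform-integrability step is a missing, non-trivial piece of the proof.
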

\begin{proof}
To prove the convergence of $\ee[Q_n]$ to $Q$, we divide the argument into two steps: first, we show that $\ee[\abs{Q_n - Q_n'}]$ converges to zero; second, we show that $\ee[Q_n']$ converges to $Q$.

\noindent\textbf{Step I.} In this step we show that $\ee[\abs{Q_n - Q_n^\prime}]$ converges to zero.
    \begin{eqnarray*}
        \lefteqn{\ee[\big(\frac{n - n_0}{n}\big)\abs{Q_n - Q_n^\prime}] }\\
        &\leq& \frac{1}{2}\ee[\frac{\ee[\bone\{Y_j\in\mathcal{I}_i^j\}\mid F_n(Y_j), F(Y_j)]\abs{F_{n,j}(Y_j) - F(Y_j)}\bone\{F_n(Y_j)\neq 1, n^{-1}\}}{\max\{F_{n,j}(Y_j)(1 - F_{n,j}(Y_j)), \frac{n-1}{n^2}\}F(Y_j)(1 - F(Y_j))}] \\
        &\leq & \ee[\frac{\abs{F_{n, j}(Y_j) - F(Y_j)}}{F(Y_j)(1 - F(Y_j))}].
    \end{eqnarray*}
    Note that
    \begin{eqnarray*}
        \ee[\frac{\abs{F_{n, j}(Y_j) - F(Y_j)}}{F(Y_j)(1 - F(Y_j))}] = \int_{t\in\rr}\frac{\ee[\abs{F_{n - 1}(t) - F(t)}]}{F(t)(1 - F(t))}d\mu(t).
    \end{eqnarray*}
    Using Theorem 1.2 of \cite{bartl2023variance}, there exists absolute constants $c_0$ and $c_1$ such that for every $\Delta \geq c_0\log\log m/m$ with probability at least $1 - \exp(-c_1\Delta m)$, for every $t$ such that $\Delta \leq F(t)(1 - F(t))$ we have 
    \begin{eqnarray*}
        \abs{F_m(t) - F(t)} \leq \sqrt{F(t)(1 - F(t))\Delta}.
    \end{eqnarray*}
Let $\delta = (1-\sqrt{1-4\Delta})/2$. Using this and symmetry, we have 
\begin{align}
    &\int_{t\in\rr}\frac{\ee[\abs{F_{n - 1}(t) - F(t)}]}{F(t)(1 - F(t))}d\mu(t) \nonumber\\
    &\leq 2 \int_{\delta \leq F(t)\leq 0.5}\frac{\ee[\abs{F_{n - 1}(t) - F(t)}]}{F(t)(1 - F(t))}d\mu(t) + 2\int_{F(t) < \delta}\frac{\ee[\abs{F_{n - 1}(t) - F(t)}]}{F(t)(1 - F(t))}d\mu(t) \nonumber\\
    &\leq 2\int_{\delta \leq F(t)\leq 0.5}\frac{\sqrt{\Delta F(t)(1 - F(t))}(1 - 2\exp(-c_1(n-1)\Delta)) }{F(t)(1 - F(t))} d\mu(t) + \nonumber\\
    &2\int_{\delta \leq F(t)\leq 0.5}\frac{2\exp(-c_1(n-1)\Delta)}{F(t)(1 - F(t))}d\mu(t) + 2\int_{F(t) < \delta}\frac{\ee[F_{n-1}(t)] + F(t)}{F(t)(1 - F(t))} d\mu(t) \nonumber\\
    &\leq \pi\sqrt{\Delta} + 4\exp(-c_1(n-1)\Delta)\log\big(\frac{1 + \sqrt{1- 4\Delta}}{1 - \sqrt{1- 4\Delta}}\big). \label{eqn:Delta}
\end{align}
Now let $\Delta = c_1^{-1}\log(n)/(n - 1)$. Then, as $n$ goes to infinity, \eqref{eqn:Delta} goes to zero. Hence $\ee[\big(\frac{n - n_0}{n}\big)\abs{Q_n - Q_n^\prime}]$ converges to zero. Since $Y$ is not almost surely a constant, as $n$ grows to $\infty$, $(n - n_0)/n$ converges to constant $\mu(\tilde{S}) > 0$. For large enough $n$ we have $(n - n_0)/n > \mu(\tilde{S})/2$. Therefore, for large enough $n$ we have
\begin{eqnarray*}
    \ee[\abs{Q_n - Q_n^\prime}] \leq \frac{2}{\mu(\tilde{S})}\ee[\big(\frac{n - n_0}{n}\big)\abs{Q_n - Q_n^\prime}].
\end{eqnarray*}
Since the right-hand side of the above inequality converges to zero, we conclude that $\lim_{n\rightarrow\infty}\ee[\abs{Q_n - Q_n^\prime}] = 0$.

\noindent\textbf{Step II.} In this step we show that $\ee[Q_n^\prime]$ converges to $Q$.
\begin{eqnarray*}
    \ee[\big(\frac{n - n_0}{n}\big)Q_n^\prime] &=& \frac{1}{2}\ee[\frac{\bone\{Y_j\in\mathcal{I}_i^j\}\bone\{F_n(Y_j)\neq 1, 1/n\}}{F(Y_j)(1 - F(Y_j))}].
\end{eqnarray*}
First, let's study the case when $\mu$ is continuous. In this case, by conditioning on the value of $F_n(Y_j)$, we have 
\begin{eqnarray*}
    \lefteqn{\ee[\frac{\bone\{Y_j\in\mathcal{I}_i^j\}\bone\{F_n(Y_j)\neq 1, 1/n\}}{F(Y_j)(1 - F(Y_j))}]}\\
    &=& \frac{1}{n}\sum_{r = 1}^n \ee[\frac{\bone\{Y_j\in\mathcal{I}_i^j\}\bone\{F_n(Y_j)\neq 1, 1/n\}}{F(Y_j)(1 - F(Y_j))}\mid F_n(Y_j) = r/n] \\
    &=& \frac{1}{n}\sum_{r = 2}^{n-1} \ee[\frac{\ee[\bone\{Y_j\in\mathcal{I}_i^j\}\mid Y_j]}{F(Y_j)(1 - F(Y_j))}\mid F_n(Y_j) = r/n] \\
    &\leq& \frac{1}{n}\sum_{r = 2}^{n-1} \big(\frac{(r - 1)(n - r)}{(n - 1)(n - 2)}\big)\ee[\frac{1}{F(Y_j)(1 - F(Y_j))}\mid F_n(Y_j) = r/n]
\end{eqnarray*}
Given $F_n(Y_j) = r/n$, $F(Y_j)\sim\text{Beta}(r, n - r + 1)$, therefore this gives us
\begin{eqnarray*}
    \ee[\frac{\bone\{Y_j\in\mathcal{I}_i^j\}\bone\{F_n(Y_j)\neq 1, 1/n\}}{F(Y_j)(1 - F(Y_j))}] \leq 2,
\end{eqnarray*}
which means $\big(\frac{n - n_0}{n}\big)Q_n^\prime$ is uniformly integrable. If $\mu$ does not have a continuous density, showing uniform integrability of $\big(\frac{n - n_0}{n}\big)Q_n^\prime$ requires extra work. We divide the argument into the following four cases: (i) Support $\mu$ attains a minimum $s_{\min}$ and a maximum $s_{\max}$ which $\mu$ has point masses on; (ii) Support $\mu$ attains a maximum $s_{\max}$ which $\mu$ has a mass point on but support $\mu$ either does not attain a minimum or it does not have a mass point on its minimum; (iii) Support $\mu$ attains a minimum $s_{\min}$ which $\mu$ has a mass point on but support $\mu$ either does not attain a maximum or it does not have a mass point on its maximum; (iv) Support $\mu$ attains a minimum or maximum or does not have point masses on them. 

\noindent\textbf{Case (i).} There exists $\delta > 0$ such that $\mu(s_{\max}), \mu(s_{\min})\geq \delta$.
\begin{eqnarray*}
     \lefteqn{\ee[\frac{\bone\{Y_j\in\mathcal{I}_i^j\}\bone\{F_n(Y_j)\neq 1, 1/n\}}{F(Y_j)(1 - F(Y_j))}]}\\
     &=& \int_{S\setminus\{ s_{\max}\}}\frac{\ee[\bone\{Y_j\in\mathcal{I}_i^j\}\bone\{F_n(Y_j)\neq 1, 1/n\}\mid Y_j = t]}{F(t)(1 - F(t))}d\mu(t) \\
     &\leq & \frac{1 + \delta}{\delta(1 - \delta)}.
\end{eqnarray*}
\noindent\textbf{Case (ii).} There exists $\delta > 0$ such that $\mu(s_{\max})\geq \delta$ and $\mu(s_{\min}) = 0$ or $S$ does not have a minimum. 
\begin{eqnarray*}
     \lefteqn{\ee[\frac{\bone\{Y_j\in\mathcal{I}_i^j\}\bone\{F_n(Y_j)\neq 1, 1/n\}}{F(Y_j)(1 - F(Y_j))}]}\\
     &=& \int_{S\setminus\{s_{\max}\}}\frac{\ee[\bone\{Y_j\in\mathcal{I}_i^j\}\bone\{F_n(Y_j)\neq 1, 1/n\}\mid Y_j = t]}{F(t)(1 - F(t))}d\mu(t) \\
     &\leq & \int_{F(t) < (n-1)^{-1}}\frac{\ee[\bone\{F_n(Y_j)\neq 1, 1/n\}\mid Y_j = t]}{F(t)(1 - F(t))}d\mu(t) + \\
     && \int_{(n-1)^{-1} \leq F(t) < 1 - \delta }\frac{\ee[\bone\{Y_j\in\mathcal{I}_i^j\}\mid Y_j = t]}{F(t)(1 - F(t))}d\mu(t) \\
     &\leq & \int_0^{(n-1)^{-1}}\frac{1 - (1 - x)^{n - 1}}{x(1 - x)}dx + \int_{(n-1)^{-1}}^{1 - \delta}\frac{2x(1 - x)}{x(1 - x)}dx.
\end{eqnarray*}
For large $n$ we have
\begin{eqnarray*}
    \int_0^{(n - 1)^{-1}}\frac{1 - (1 - x)^{n - 1}}{x(1 - x)}dx & \lesssim& \int_0^{(n - 1)^{-1}}\frac{(n - 1)x}{x(1 - x)}dx \leq \frac{n-2}{n-1} \leq 2.
\end{eqnarray*}
Therefore 
\begin{eqnarray*}
    \ee[\frac{\bone\{Y_j\in\mathcal{I}_i^j\}\bone\{F_n(Y_j)\neq 1, 1/n\}}{F(Y_j)(1 - F(Y_j))}] \leq 4.
\end{eqnarray*}
\noindent\textbf{Case (iii).} There exists $\delta > 0$ such that $\mu(s_{\min})\geq \delta$ and $\mu(s_{\max}) = 0$ or $S$ does not have a maximum. Note that by symmetry, this is equivalent to the previous case.

\noindent\textbf{Case (iv).} $\mu$ is not continuous but does not have point masses at minimum or maximum. Note that this is similar to case (ii). 
\begin{eqnarray*}
    \lefteqn{\ee[\frac{\bone\{Y_j\in\mathcal{I}_i^j\}\bone\{F_n(Y_j)\neq 1, 1/n\}}{F(Y_j)(1 - F(Y_j))}]}\\
    &\leq & \int_{\min\{F(t), 1-F(t)\}\leq (n-1)^{-1}}\frac{\ee[\bone\{F_n(Y_j)\neq 1, 1/n\}\mid Y_j = t]}{F(t)(1 - F(t))}d\mu(t) + \\
    && \int_{(n-1)^{-1} < F(t) < 1-(n-1)^{-1}}\frac{\ee[\bone\{Y_j\in\mathcal{I}_i^j\}\mid Y_j = t]}{F(t)(1 - F(t))}d\mu(t) \\
    &\leq & 6.
\end{eqnarray*}
Therefore $\big(\frac{n - n_0}{n}\big)Q_n^\prime$ is uniformly integrable.

Note that by Lemma 11.3. in \cite{azadkia2021fast} $\bbx_{N^{-j}(i)}\rightarrow \bbx_i$ with probability one. Then, using Lemma 11.7. in \cite{azadkia2021fast} with probability one we have
\begin{align*}
    \ee[\bone\{Y_j\in\mathcal{I}_i^j\}\mid Y_j, \bbx_i, \bbx_{N^{-j}(i)}] - \ee[\bone\{Y_j\in\mathcal{I}_i^\prime\}\mid Y_j, \bbx_i] \rightarrow 0,
\end{align*}
where $\mathcal{I}_i^\prime = [\min\{Y_i, Y_i^\prime\}, \max\{Y_i, Y_i^\prime\}]$ in which $Y_i$ and $Y_i^\prime$ are i.i.d. given $\bbx_i$. Also 
\begin{align*}
    \ee[\bone\{Y_j\in\mathcal{I}_i^\prime\}\mid Y_j] &= \ee[\ee[\bone\{Y_j\in\mathcal{I}_i^\prime\}\mid Y_j, \bbx_i]\mid Y_j] \\
    & \rightarrow 2\ee[F_{\bbx_i}(Y_j)(1 - F_{\bbx_i}(Y_j))\mid Y_j]. 
\end{align*}
Since $\bone\{F_n(Y_j)\neq 1, 1/n\}$ converges almost surely to $\bone\{Y_j\in\tilde{S}\}$, by the dominated convergence theorem, we have
\begin{align*}
\ee[\big(\frac{n - n_0}{n}\big)Q_n^\prime]\rightarrow\int_{\tilde{S}}\frac{\ee[F_\bbx(t)(1 -  F_\bbx(t))]}{F(t)(1 - F(t))}d\mu(t).
\end{align*}
Considering that $1-n_0/n$ converges almost surely to $\mu(\tilde{S})$ which is bounded away from zero, $(1 - \frac{n_0}{n})^{-1} - \mu(\tilde{S})^{-1}$ converges almost surely to zero. Finally the uniformly integrability of $\big(\frac{n - n_0}{n}\big)Q_n^\prime$ gives us
\begin{align*}
   \ee[Q_n^\prime] = \ee\bigg[\big(\frac{n}{n - n_0} - \frac{1}{\mu(\tilde{S})}\big)\big(\frac{n - n_0}{n}\big) Q_n^\prime\bigg] + \frac{1}{\mu(\tilde{S})} \ee\bigg[\big(\frac{n - n_0}{n}\big) Q_n^\prime\bigg].
\end{align*}
The first term on the right-hand side of the above equality converges to zero by the Vitali convergence theorem. Therefore 
\begin{eqnarray*}
    \lim_{n\rightarrow\infty}\ee[Q_n^\prime] = \frac{1}{\mu(\tilde{S})}\int_{\tilde{S}}\frac{\ee[F_\bbx(t)(1 -  F_\bbx(t))]}{F(t)(1 - F(t))}d\mu(t) = Q.
\end{eqnarray*}
Putting steps I and II together gives us $\lim_{n\rightarrow\infty}\ee[Q_n] = Q$.
\end{proof}

\begin{lmm}\label{lmm2}
    For $Q_n$ defined in ~\eqref{eq:Qn}, there are constants $C_1$ and $C_2$ such that 
    \begin{eqnarray*}
        \pp(\abs{Q_n - \ee[Q_n]}\geq t) \leq C_1e^{-C_2nt^2/\log^2 n}.
    \end{eqnarray*}
\end{lmm}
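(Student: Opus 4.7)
The plan is to apply McDiarmid's bounded-differences inequality to $Q_n$ viewed as a symmetric function of the i.i.d.\ sample $Z_k := (Y_k, \bbx_k)$, $k=1,\ldots,n$. It suffices to show that for each $k$,
\begin{eqnarray*}
c_k := \sup\bigl|Q_n(Z_1,\ldots,Z_n) - Q_n(Z_1,\ldots,Z_{k-1},Z_k',Z_{k+1},\ldots,Z_n)\bigr| = O(\log n / n),
\end{eqnarray*}
whence $\sum_k c_k^2 = O(\log^2 n/n)$ and McDiarmid yields $\pp(|Q_n - \ee Q_n|\geq t) \leq 2\exp(-2t^2/\sum_k c_k^2) = 2\exp(-Cnt^2/\log^2 n)$, which is the claim.

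Write $Q_n = S_n/[2(n-1)(n-n_0)]$ with $S_n := \sum_j w_j N_j$, where $w_j := \bone\{R_j\neq 1,n\}/[F_{n,j}(Y_j)(1-F_{n,j}(Y_j))]$ and $N_j := \#\{i\neq j : Y_j \in \mathcal{I}_i^j\}$. The first key observation is the deterministic estimate $N_j \leq (1+\gamma_p)\min(R_j-1, n-R_j)$ on $\{R_j \neq 1,n\}$: an interval $\mathcal{I}_i^j$ can contain $Y_j$ only when one of the indices $i, N^{-j}(i)$ has rank below $R_j$ and the other above, and the classical nearest-neighbour kissing-number bound in $\rr^p$ caps by an absolute constant $\gamma_p$ the number of points for which a given point is a nearest neighbour. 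Consequently $w_j N_j = O(n)$ uniformly in $j$, since $(R_j-1)(n-R_j)\geq (n-1)\max(R_j-1, n-R_j)/2$.

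Next, decompose the change in $Q_n$ produced by replacing $Z_k$ by $Z_k'$ into four parts: (a) the single $j=k$ term, of magnitude $(|w_k N_k|+|w_k' N_k'|)/[2(n-1)(n-n_0)] = O(1/n)$; (b) the $n-1$ terms with $i=k$, $j\neq k$, whose indicators $\bone\{Y_j\in\mathcal{I}_k^j\}$ may each flip, contributing at most $\sum_{j\neq k}w_j/[2(n-1)(n-n_0)] = O(\log n/n)$ via the identity $\sum_{r=2}^{n-1}w_r = 2(n-1)H_{n-2} = O(n\log n)$; (c) terms with $i,j\neq k$ and $N^{-j}(i)\in\{k,k'\}$, of which at most $2\gamma_p$ exist by the kissing-number bound, each contributing $O(1/n)$; (d) the weight perturbations caused by the rank shifts $R_j^{\mathrm{new}}-R_j^{\mathrm{old}}\in\{-1,0,1\}$. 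For (d), the elementary bound $|w_r-w_{r+1}|=O(n/r^2)$ for $2\leq r\leq n/2$ (and symmetrically on the right) combined with $N_j = O(\min(R_j, n-R_j))$ yields
\begin{eqnarray*}
\frac{1}{n^2}\sum_j|w_j^{\mathrm{new}}-w_j^{\mathrm{old}}|N_j \leq \frac{C}{n^2}\sum_{r=2}^{n/2}\frac{n}{r^2}\cdot r + \text{sym.} = O(\log n/n).
\end{eqnarray*}
A telescoping hybrid argument then converts this componentwise control into the bound $c_k=O(\log n/n)$.

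A technical issue is that the normalising factor $(n-1)(n-n_0)$ itself depends on the sample via $n_0$. Since $Y$ is not a.s.\ constant, $n_0/n$ converges almost surely to a limit $\alpha<1$, and a Hoeffding estimate gives $\pp\{n-n_0<\tfrac{1}{2}(1-\alpha)n\}\leq e^{-cn}$. The McDiarmid analysis is carried out on this event of overwhelming probability, on which the normalisation is $\Theta(n^2)$, and the exceptional event is absorbed into $C_1$ at negligible additive cost. The main obstacle is the bookkeeping in step (d): a crude use of the pointwise bound $w_j = O(n)$ delivers only $c_k = O(1)$; the sharper $\log n/n$ rate requires simultaneously exploiting the rapid decay $|w_{r+1}-w_r| = O(n/r^2)$ away from the tails and the matching linear growth of $N_j$ with distance from the extremes, both encoded above. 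Once $c_k=O(\log n/n)$ is established, McDiarmid's inequality closes the proof.
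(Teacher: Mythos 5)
Your proposal follows essentially the same route as the paper: McDiarmid's bounded-difference inequality applied after showing each single-coordinate perturbation changes $Q_n$ by $O(\log n/n)$, using the nearest-neighbour in-degree bound, the estimate $N_j \lesssim \min\{R_j-1,\,n-R_j\}$, the harmonic-sum identity $\sum_j w_j = O(n\log n)$ for the terms whose indicators flip, and the increment bound $\abs{w_r - w_{r+1}} = O(n/r^2)$ paired with the linear growth of $N_j$ to control the rank-shift perturbations — all of which appear in the paper's decomposition into the terms $A_1,\ldots,A_5$ and the subsequent $\bbx$-perturbation step. Two bookkeeping slips do not affect the conclusion: in your step (c) the in-degree bound caps the number of affected pairs at $O(\gamma_p)$ for each fixed $j$, hence $O(\gamma_p n)$ pairs in total, which after weighting still yields $O(\log n/n)$ rather than the $O(1/n)$ you state; and your inequality $(R_j-1)(n-R_j)\geq (n-1)\max(R_j-1,n-R_j)/2$ should have $\min$ in place of $\max$, though the intended consequence $w_jN_j=O(n)$ is correct.
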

\begin{proof}
We apply the bounded difference inequality~\cite{mcdiarmid1989method} to establish concentration. To do so, we first derive an upper bound on the maximum change in $Q_n$ resulting from replacing a single observation $(\bbx_k, Y_k)$ with an alternative value $(\bbx_k^\prime, Y_k^\prime)$ for any $k \in [n]$. We decompose this change into two steps: first, replacing $(Y_k, \bbx_k)$ with $(Y_k^\prime, \bbx_k)$, and second, replacing $(Y_k^\prime, \bbx_k)$ with $(Y_k^\prime, \bbx_k^\prime)$.
    
Take an arbitrary $k\in[n]$. Let $Q_n^{k_Y}$ be defined similar to $Q_n$ but using sample $\{(Y_i, \bbx_i)\}_{i \neq k}\cup\{(Y_k^\prime, \bbx_k)\}$. We show that $\abs{Q_n - Q_n^{k_Y}} \leq C\log n/n$ for some constant $C$ that only depends on the dimension of $\bbx$.

First, observe that since $\bbx_k$ remains unchanged, the nearest neighbour indices are unaffected. We analyse the effect of modifying $Y_k$ under two distinct scenarios: (i) neither $Y_k$ nor $Y_k^\prime$ is the minimum or maximum among $\{Y_i\}_{i \ne k}$; (ii) at least one of $Y_k$ or $Y_k^\prime$ is the minimum or maximum relative to $\{Y_i\}_{i \ne k}$.

\noindent\textbf{Case (i). Neither $Y_k$ nor $Y_k^\prime$ attains the minimum or maximum value.} Note that in this case, for all indices $j \in [n]$, the indicator $\bone\{n^{-1} < F_n(Y_j) < 1\}$ remains unchanged, as replacing $Y_k$ with $Y_k^\prime$ does not alter the minimum or maximum of the $\{Y_i\}$. Consequently, $n_0$ also remains unchanged. Without loss of generality, we assume $Y_k < Y_k^\prime$. Then we have
\begin{align*}
    2(n-1)(n - n_0)Q_n &= \sum_{\substack{j: Y_j < Y_k \text{ or } Y_j > Y_k' \\ n^{-1} < F_n(j) < 1}} \sum_{\substack{i \neq j \\ i \neq k,\, N^{-j}(i) \neq k}} \frac{\bone\{Y_j \in \mathcal{I}_i^j\}}{F_{n,j}(Y_j)(1 - F_{n,j}(Y_j))} + \\
    &\quad \sum_{\substack{j: Y_j < Y_k \text{ or } Y_j > Y_k' \\ n^{-1} < F_n(j) < 1}} \sum_{\substack{i \neq j \\ i = k \text{ or } N^{-j}(i) = k}} \frac{\bone\{Y_j \in \mathcal{I}_i^j\}}{F_{n,j}(Y_j)(1 - F_{n,j}(Y_j))} + \\
    &\quad \sum_{\substack{j: Y_k \leq Y_j \leq Y_k' \\ n^{-1} < F_n(j) < 1}} \sum_{\substack{i \neq j \\ i \neq k,\, N^{-j}(i) \neq k}} \frac{\bone\{Y_j \in \mathcal{I}_i^j\}}{F_{n,j}(Y_j)(1 - F_{n,j}(Y_j))} + \\
    &\quad \sum_{\substack{j: Y_k \leq Y_j \leq Y_k' \\ n^{-1} < F_n(j) < 1}} \sum_{\substack{i \neq j \\ i = k \text{ or } N^{-j}(i) = k}} \frac{\bone\{Y_j \in \mathcal{I}_i^j\}}{F_{n,j}(Y_j)(1 - F_{n,j}(Y_j))} + \\
    &\quad \sum_{i \neq k} \frac{\bone\{Y_k \in \mathcal{I}_i^k\}}{F_{n,k}(Y_k)(1 - F_{n,k}(Y_k))} \\
    &= A_1 + A_2 + A_3 + A_4 + A_5.
\end{align*}
We denote the corresponding terms involving $Y_k^\prime$ by $A_i^{k_Y}$ for $i = 1, \ldots, 5$. Observe that for all $j$ such that $Y_j < Y_k$ or $Y_j > Y_k^\prime$, the empirical distribution values remain unchanged, i.e., $F_{n,j}(Y_j) = F_{n,j}^k(Y_j)$, where $F_{n,j}^k(Y_j)$ denotes the empirical distribution after replacing $Y_k$ with $Y_k^\prime$. Consequently, in the terms $A_1$ and $A_2$, all denominators remain unchanged after the modification. In contrast, for indices $j$ such that $Y_k \leq Y_j \leq Y_k^\prime$, the value of $F_{n,j}(Y_j)$ changes by exactly $(n - 1)^{-1}$.

We first focus on $A_1$ and $A_2$. Since changing $Y_k$ to $Y_k^\prime$ does not affect the denominators, it suffices to analyse the numerator term $\bone\{Y_j \in \mathcal{I}_i^j\}$. In the case of $A_1$, the intervals $\mathcal{I}_i^j$ remain unchanged under the replacement of $Y_k$ with $Y_k^\prime$, so $A_1$ is unaffected, i.e., $A_1 = A_1^{k_Y}$.  

For $A_2$, consider first the case where $Y_j < Y_k < Y_k^\prime$. For any $i$ such that $N^{-j}(i) = k$, the indicator $\bone\{Y_j \in \mathcal{I}_i^j\}$ remains unchanged when $Y_k$ is replaced by $Y_k^\prime$. A similar argument holds when $Y_k < Y_k^\prime < Y_j$.  

Finally, consider the case where $i = k$. Even in this situation, the indicator $\bone\{Y_j \in \mathcal{I}_k^j\}$ remains unchanged under the modification of $Y_k$, and thus $A_2 = A_2^{k_Y}$.

Now consider $A_3$. Note that all indicator terms $\bone\{Y_j \in \mathcal{I}_i^j\}$ remain unchanged when $Y_k$ is replaced by $Y_k^\prime$. Therefore, it suffices to bound the difference
\[
\left| \frac{1}{F_{n,j}(Y_j)(1 - F_{n,j}(Y_j))} - \frac{1}{F_{n,j}^k(Y_j)(1 - F_{n,j}^k(Y_j))} \right|
\]
for those indices $i$ such that $\bone\{Y_j \in \mathcal{I}_i^j\} = 1$. We first consider the case where there are no ties among the $Y_i$'s. In this setting, for each $j$, Lemma 11.4. in \cite{azadkia2021simple} implies that there are at most $nC(p)\min\{F_n(Y_j) - n^{-1}, 1 - F_n(Y_j)\}$ such indices $i$ for which $Y_j \in \mathcal{I}_i^j$. This gives us 
\allowdisplaybreaks
\begin{align*}
    \lefteqn{\abs{A_3 - A_3^{k_Y}} \leq \sum_{\substack{j: Y_k \leq Y_j \leq Y_k^\prime \\ n^{-1} < F_n(Y_j) < 1}} 
    n C(p)\min\{F_n(Y_j) - \frac{1}{n}, 1 - F_n(Y_j)\} \times} \\
    &\quad \abs{\frac{1}{F_{n,j}(Y_j)(1 - F_{n,j}(Y_j))} - \frac{1}{F_{n,j}^k(Y_j)(1 - F_{n,j}^k(Y_j))}} \\
    &= nC(p)\sum_{j = 3}^{n - 1}\min\left\{\frac{j-1}{n}, 1- \frac{j}{n}\right\}\left|\frac{1}{\left(\frac{j - 1}{n - 1}\right)\left(1 - \frac{j - 1}{n - 1}\right)} - \frac{1}{\left(\frac{j - 2}{n - 1}\right)\left(1 - \frac{j - 2}{n - 1}\right)}\right| \\
    &\leq 2nC(p)\Bigg(\sum_{j = 1}^{n/2 - 1} \left( \frac{1}{j} + \frac{1}{n - j} \right) 
    - n\sum_{i = n/2}^{n - 2}\frac{1}{i(i + 1)} \Bigg) \\
    &= O(n\log n).
\end{align*}
The case where ties exist among the $Y_i$'s is similar but requires additional care. Let $r_1 < \cdots < r_m$ denote the ordered sequence of distinct values taken by the empirical ranks of $Y_j$ for $j \in [n]$. Define $\ell_*$ as the smallest index $i \in [m]$ such that for every $j$ satisfying $Y_k \leq Y_j \leq Y_k^\prime$, we have $F_n(Y_j) \leq r_i$. Similarly, define $\ell^*$ as the largest index $i \in [m]$ such that for every such $j$, $F_n(Y_j) \geq r_i$.
Then
\begin{align*}
    \lefteqn{\abs{A_3 - A_3^{k_Y}} \leq C(p)(n-1)^2\times}\\
    & \sum_{i=\ell_*}^{\ell^*} (r_i - r_{i-1})\min\{(r_i - 1), (n - r_i)\}\left|\frac{1}{(r_i - 1)(n - r_i)} - \frac{1}{(r_i - 2)(n - r_i + 1)}\right|.
\end{align*}

For all indices $i$ such that $r_i \leq n/2$, replacing the corresponding $Y_j$ values with distinct (tie-free) values can only increase the difference $\abs{A_3 - A_3^{k_Y}}$. Therefore, it suffices to bound this difference in the case where $r_{\ell_*} \geq n/2$, since for all $r_i\leq n/2$ we can use the bound on this difference when there are no ties. In this case, we have
\begin{align*}
    \abs{A_3 - A_3^{k_Y}} \leq C(p)n^2\sum_{i={\ell_*}}^{\ell^*} (r_i - r_{i-1})\frac{\abs{2r_i - n - 2}}{(r_i - 1)(r_i - 2)(n - r_i + 1)}.
\end{align*}
Define
\[
g(r)=\frac{\abs{2r - n - 2}}{(r - 1)(r - 2)(n - r + 1)},
\qquad r \in \{1,\dots,n-1\}.
\]

For $r \ge n/2$, the function $g$ is U‑shaped and attains its minimum at
$\lceil n/2 + 1\rceil$.
Hence, for every index $i$ with $r_i \ge \lceil n/2 + 1\rceil$ we bound $g(r_i)$ above by
$g(r_{\ell^*})$.
Because $r_{\ell^*} < n$, we have
$n - r_{\ell^*} + 1 = O(n)$, which implies
\[
\abs{A_3 - A_3^{k_Y}} = O(n).
\]

Combining this bound with those obtained in the remaining cases yields the overall estimate
\[
\abs{A_3 - A_3^{k_Y}} = O(n\log n).
\]

For $A_4$, observe that for any fixed $j$ there are at most $C(p)$ indices $i$ such that either $i = k$ or $N^{-j}(i) = k$. Consequently,
\allowdisplaybreaks
\begin{align*}
    \abs{A_4 - A_4^{k_Y}} 
    &\leq \sum_{\substack{j: Y_k \leq Y_j \leq Y_k^\prime \\ n^{-1} < F_n(Y_j) < 1}} 
    C(p)\abs{\frac{1}{F_{n,j}(Y_j)(1 - F_{n,j}(Y_j))} 
    - \frac{1}{F_{n,j}^k(Y_j)(1 - F_{n,j}^k(Y_j))}}
\end{align*}
We first examine the case in which the $Y_j$ are all distinct. Then
\begin{align*}
    \abs{A_4 - A_4^{k_Y}} &\leq 2C(p)n^2\sum_{j = 2}^{n/2}
    \abs{\frac{1}{(j - 1)(n - j)} 
    - \frac{1}{(j - 2)(n - j + 1)}}  = O(n).
\end{align*}
When ties are present among the $Y_j$ values, we have
\begin{align*}
    \abs{A_4 - A_4^{k_Y}} &\leq C(p)n^2\sum_{i = \ell_*}^{\ell^*}
    (r_i - r_{i-1})\abs{\frac{1}{(r_i - 1)(n - r_i)} 
    - \frac{1}{(r_i - 2)(n - r_i + 1)}}  \\
    &= O(n).
\end{align*}
Finally, observe that 
\[
\abs{A_5 - A_5^{k_Y}} \le A_5 + A_5^{k_Y}.
\]
We therefore bound $A_5$ only, as the same argument applies verbatim to $A_5^{k_Y}$.  
Since there are at most 
$n C(p)\min\{F_n(Y_k),\,1 - F_n(Y_k)\}$ 
indices \(i\) for which \(\bone\{Y_k \in \mathcal{I}_i^k\}=1\), we have
\begin{eqnarray*}
    A_5 \leq \frac{n C(p)\min\{F_n(Y_k), 1 - F_n(Y_k)\}}{F_{n,k}(Y_k)(1 - F_{n,k}(Y_k))} = O(n).
\end{eqnarray*}
Consequently, provided that replacing $Y_k$ with $Y_k^\prime$ leaves the sample minimum and maximum unchanged, we obtain
\begin{eqnarray*}
    \abs{Q_n - Q_n^{k_Y}} = O(\frac{\log n}{n}).
\end{eqnarray*}

\noindent\textbf{Case (ii). $Y_k$ or $Y_k^\prime$ is minimum or maximum.} Without loss of generality, assume $Y_k < Y_k^\prime$. Then one of the following scenarios arises:

\noindent\textbf{(a)} \textit{Replacing $Y_k$ with $Y_k^\prime$ leaves both the sample minima and maxima unchanged:} $Y_k < Y_k^\prime \le Y_j$ for all $j \ne k$. Consequently, $Q_n = Q_n^{k_Y}$.

\noindent\textbf{(b)} \textit{Replacing $Y_k$ with $Y_k^\prime$ alters the set of minima but leaves the set of maxima unchanged:} 
we have $Y_k \le Y_j$ for every $j \ne k$, and there exists at least one index $j$ with $Y_j \ge Y_k^\prime$.  
If, for some such $j$, we have $n^{-1} < F_n(Y_j) < 1$ before the change and $F_n^{k}(Y_j)=n^{-1}$ afterwards, then the contribution of that $j$ to $\lvert Q_n - Q_n^{k_Y}\rvert$ is bounded by  
\[
\frac{C(p)}{2\,(n-2)\,(n - n_0 - 1)} = O(n^{-1}).
\]
For every other index $j$, the argument from case (i) applies.

\noindent\textbf{(c)} \textit{Replacing $Y_k$ with $Y_k^\prime$ changes both the sample minima and maxima:} 
indeed, $Y_k \le Y_j \le Y_k^\prime$ for every $j \ne k$.  
Assume there exist indices $j_1$ and $j_2$ such that 
\[
n^{-1} < F_n(Y_{j_1}) < 1,\quad F_n^{k}(Y_{j_1}) = n^{-1},\quad
F_n(Y_{j_2}) = 1,\quad n^{-1} < F_n^{k}(Y_{j_2}) < 1 .
\]
The combined contribution of these two indices to $\abs{Q_n - Q_n^{k_Y}}$ is bounded by
\[
\frac{C(p)}{(n-2)\,(n - n_0 - 1)} = O(n^{-1}).
\]
For all remaining indices $j$, the reasoning from case (i) applies unchanged.

\noindent\textbf{(d)} \textit{Replacing $Y_k$ with $Y_k^\prime$ leaves the set of sample minima unchanged but alters the set of sample maxima:}  
we have $Y_j \leq Y_k^\prime$ for every $j \ne k$, and there exists at least one index $j$ with $Y_j \leq Y_k$.  
If there is an index $j$ for which $F_n(Y_j)=1$ and $n^{-1} < F_n^{k}(Y_j) < 1$, the contribution of that $j$ to $\abs{Q_n - Q_n^{k_Y}}$ is bounded by
\[
\frac{C(p)}{2\,(n-2)\,(n - n_0 - 1)} = O(n^{-1}).
\]
For every other index $j$, the argument from case (i) applies.

\medskip
\noindent
Combining Cases (i)–(iv), we obtain
\[
\abs{Q_n - Q_n^{k_Y}} \leq \frac{C(p)\,\log n}{n},
\]
whenever $(Y_k,\bbx_k)$ is replaced by $(Y_k^\prime,\bbx_k)$.

We now analyse the change induced when replacing $(Y_k^\prime,\bbx_k)$ with $(Y_k^\prime,\bbx_k^\prime)$.  
Because the $Y_i$ values remain unchanged, both the denominators
$F_{n,j}(Y_j)\bigl(1 - F_{n,j}(Y_j)\bigr)$ and the index set
$\{j : n^{-1} < F_n(Y_j) < 1\}$ are unaffected.  
For notational convenience, therefore, we study the effect of changing
$(Y_k,\bbx_k)$ to $(Y_k,\bbx_k^\prime)$.

Let $Q_n^{k_\bx}$ denote the analogue of $Q_n$ computed from the sample in
which $\bbx_k$ is replaced by $\bbx_k^\prime$.
For each fixed $j$, modifying $\bbx_k$ can alter at most $C(p)$ of the
intervals $\mathcal{I}_i^j$.
Among those indices $i$ whose intervals change, only those for which
$\bone\{Y_j \in \mathcal{I}_i^j\}$ flip value matters—namely, the indices
where $Y_j \in \mathcal{I}_i^j$ under $\bbx_k$ but $Y_j \notin \mathcal{I}_i^j$ under $\bbx_k^\prime$, or vice versa.

Finally, if $Y_j$ has rank $r_i$, then at most $\min\{r_i - 1, n - r_i\}$ of the indicators $\bone\{Y_j \in \mathcal{I}_i^j\}$ equal $1$ under either
$\bbx_k$ or $\bbx_k^\prime$. Therefore
\begin{align*}
    \abs{Q_n - Q_n^{k_\bx}} &\leq \left(\frac{n - 1}{n - n_0}\right)\sum_{i = 1}^m (r_i - r_{i-1})\frac{\min\{C(p), r_i - 1, n - r_i\}}{(r_i - 1)(n - r_i)}\\
    &\leq C(p)\left(\frac{n - 1}{n - n_0}\right)\sum_{i = 1}^\ell \frac{(r_i - r_{i-1})}{(r_i - 1)(n - r_i)} \\
    & \leq \frac{C(p)\log n}{n}.
\end{align*}
Combining the bounds for $\abs{Q_n - Q_n^{k_Y}}$ and $\abs{Q_n - Q_n^{k_\bx}}$, we obtain that replacing $(Y_k,\bbx_k)$ with $(Y_k^\prime,\bbx_k^\prime)$ yields
\begin{eqnarray*}
    \abs{Q_n - Q_n^{k}} \leq \frac{C(p)\log n}{n}. 
\end{eqnarray*}
Applying McDiarmid's bounded‑difference inequality~\cite{mcdiarmid1989method} gives
\begin{eqnarray*}
    \pp(\abs{Q_n - \ee[Q_n]}\geq t) \leq 2\exp(-Cnt^2/\log^2 n)
\end{eqnarray*}
\end{proof}
Using Lemma~\ref{lmm2}, set $t_n = \sqrt{2}(\log n)^{3/2}/\sqrt{Cn}$. Then note that 
\begin{eqnarray*}
    \sum_{n = 1}^\infty\pp(\abs{Q_n - \ee[Q_n]}\geq t_n) \leq 2\sum_{i = 1}^n\frac{1}{n^2} < \infty.
\end{eqnarray*}
By the Borel–Cantelli lemma, it follows that $\abs{Q_n - \ee[Q_n]}$ converges to zero almost surely. This, combined with Lemma~\ref{lmm1}, establishes the almost sure convergence of $Q_n$ to $Q$.
\end{proof}

\subsection{Proof of Corollary~\ref{cor:conditional}}
\begin{proof}
    Theorem~\ref{thm:consistency} guarantees the convergence of $\nu_n(Y, (\bbx, \bbz))$ and $\nu_n(Y, \bbz)$ to their population counterparts. Additionally since $Y$ is not almost surely a function of $\bbz$, we have $1 - \nu(Y, \bbz) \neq 0$. Applying continuous mapping theorem gives the desired result. 
\end{proof}

\subsection{Proof of Theorem~\ref{thm:simpleConsistency}} 
\begin{proof}
    Using Lemma 9.3. in \cite{chatterjee2021new}, the proof closely mirrors that of Theorem~\ref{thm:consistency}, hence we omit it here. The only difference is that the constant  $C(p)$ can be bounded above by 3 throughout the argument.
\end{proof}

\subsection{Proof of Proposition~\ref{prop:momentsNun}}
\begin{proof}
    For $Y$ with continuous distribution we have $n_0 = 2$, and therefore 
    \[
    \nu_n(Y, \bbx) = 1 - \frac{1}{2}\left(\frac{n-1}{n-2}\right) S_n ,
    \]
    where $S_n := \sum_{j = 1}^n U_j$ for
    \begin{align*}
        U_j := \frac{1}{(R_j - 1)(n - R_j)}\sum_{i\neq j}\bone\{R_j\in\mathcal{R}_i^j\} \bone\{R_j \neq 1, n\}.
    \end{align*}
    For $\bbx$ and $Y$ independent we have
    \begin{align*}
        \ee[U_j] &= \frac{1}{n}\sum_{r = 2}^{n - 1}\frac{1}{(r - 1)(n - r)}\ee\left[\sum_{i\neq j}\bone\{r\in\mathcal{R}_i^j\}\mid R_j = r\right] \\
        &= \frac{1}{n}\sum_{r = 2}^{n - 1}\frac{(n - 1)}{(r - 1)(n - r)}\frac{2(r - 1)(n - r)}{(n - 1)(n - 2)} \\
        &= \frac{2}{n},
    \end{align*}
    therefore 
    \[
    \ee[\nu_n(Y, \bbx)] = \frac{-1}{n-2}.
    \]

    Note that $\var(\nu_n(Y, \bbx)) = \frac{1}{4}\left(\frac{n-1}{n-2}\right)^2\var(S_n)$, and
    \begin{align*}
        \var(S_n) = \sum_{j = 1}^n\var(U_j) + \sum_{i \neq j}\Cov(U_i, U_j). 
    \end{align*}
    Hence we need to find $\var(U_j)$ and $\Cov(U_i, U_j)$. Note that 
    \begin{align*}
        \ee[U_j^2] &= \ee\left[\frac{\bone\{R_j\neq 1, n\}}{(n - R_j)^2(R_j - 1)^2}\left(\sum_{i\neq j}\bone\{R_j\in\mathcal{R}_i^j\}\right)^2\right] \\
        &= \frac{1}{n}\sum_{r = 2}^{n - 1}\frac{1}{(r - 1)^2(n - r)^2}\ee\left[\left(\sum_{i \neq j}\bone\{r\in\mathcal{R}_i^j\}\right)^2\mid R_j = r\right] \\
        &= \frac{1}{n}\sum_{r = 2}^{n - 1}\frac{1}{(r - 1)^2(n - r)^2}\ee\left[\sum_{i \neq j}\bone\{r\in\mathcal{R}_i^j\}\mid R_j = r\right] + \\
        & \frac{1}{n}\sum_{r = 2}^{n - 1}\frac{1}{(r - 1)^2(n - r)^2}\ee\left[\sum_{i, k \neq j, i\neq k}\bone\{r\in\mathcal{R}_i^j\}\bone\{r\in\mathcal{R}_k^j\}\mid R_j = r\right] \\
        &= \frac{1}{n}\sum_{r = 2}^{n - 1}\frac{1}{(r - 1)^2(n - r)^2}\frac{2(n - 1)(r - 1)(n - r)}{(n - 1)(n - 2)} + \\
        & \frac{1}{n}\sum_{r = 2}^{n - 1}\frac{(n - 1)(n - 2)}{(r - 1)^2(n - r)^2}\ee\left[\bone\{r\in\mathcal{R}_i^j\}\bone\{r\in\mathcal{R}_k^j\}\mid R_j = r\right]
    \end{align*}
    for $i \neq k$ we have two scenarios, either $\abs{\{R_i, R_k, R_{N^{-j}(i)}, R_{N^{-j}(k)}\}} = 4$ or it is smaller. In the second case, either we have $R_{N^{-j}(i)} = R_{N^{-j}(k)}$ or $R_{N^{-j}(i)} = R_k$ (or $R_{N^{-j}(k)} = R_i$). We let $p_n$ be the probability of $\abs{\{R_i, R_k, R_{N^{-j}(i)}, R_{N^{-j}(k)}\}} < 4$. Note that $p_n = O(1/n)$.
    \begin{align*}
        \ee\left[\bone\{r\in\mathcal{R}_i^j\}\bone\{r\in\mathcal{R}_k^j\}\mid R_j = r\right] &= \frac{4(r - 1)(r - 2)(n - r)(n - r - 1)}{(n - 1)(n - 2)(n - 3)(n - 4)}(1 - p_n) + \\
        & cp_n\frac{(r - 1)(r - 2)(n - r) + (r - 1)(n - r)(n - r - 1)}{(n - 1)(n - 2)(n - 3)}.
    \end{align*}
    Putting these together gives us
    \begin{align}\label{eq:varOrder}
        \var(U_i) = O\left(\frac{\log n}{n^3}\right).
    \end{align}

    \begin{align*}
        \ee[U_a U_b] &= \ee\left[\frac{\bone\{R_a\neq 1, n\}\bone\{R_b\neq 1, n\}}{(n - R_a)(R_a - 1)(n - R_b)(R_b - 1)}\left(\sum_{i\neq a}\bone\{R_a\in\mathcal{R}_i^a\}\right)\left(\sum_{j\neq b}\bone\{R_b\in\mathcal{R}_j^b\}\right)\right] \\
        &= \frac{2}{n(n - 1)}\sum_{2\leq r < s \leq n-1}\frac{1}{(r - 1)(n - r)(s - 1)(n - s)}\ee\left[\sum_{i\neq a}\sum_{j\neq b}\bone\{r\in\mathcal{R}_i^a\}\bone\{s\in\mathcal{R}_j^b\}\mid R_a = r, R_b = s\right] \\
        &= \frac{2}{n(n - 1)}\sum_{2\leq r < s \leq n-1}\frac{E_1 + E_2}{(r - 1)(n - r)(s - 1)(n - s)},
    \end{align*}
    where
    \begin{align*}
        E_1 &:= \sum_{i, j\neq a, b, i\neq j}\ee\left[\bone\{r\in\mathcal{R}_i^a\}\bone\{s\in\mathcal{R}_j^b\}\mid R_a = r, R_b = s\right], \\
        E_2 &:= \sum_{i\neq a, b}\ee\left[\bone\{r\in\mathcal{R}_i^a\}\bone\{s\in\mathcal{R}_i^b\}\mid R_a = r, R_b = s\right] + \\
        &\quad \sum_{i\neq a}\ee\left[\bone\{r\in\mathcal{R}_i^a\}\bone\{s\in\mathcal{R}_a^b\}\mid R_a = r, R_b = s\right] + \\
        &\quad \sum_{j\neq b}\ee\left[\bone\{r\in\mathcal{R}_b^a\}\bone\{s\in\mathcal{R}_j^b\}\mid R_a = r, R_b = s\right].
    \end{align*}
    Note that 
    \begin{align*}
        E_1 = \frac{4}{n^2} + O(\frac{1}{n^3}),\qquad E_2 = O(\frac{1}{n^3}).
    \end{align*}
    Therefore we have
    \begin{align}\label{eq:covOrder}
        \Cov(U_a, U_b) = O\left(\frac{1}{n^3}\right).
    \end{align}
    Putting~\ref{eq:varOrder} and~\ref{eq:covOrder} together gives us $\var(\nu_n(Y, \bbx)) = O(1/n)$. 
\end{proof}

\subsection{Proof of Proposition~\ref{thm:nullSimpleAsymp}}
\begin{proof}
Lemma~\ref{lmm:NullSimpleMoments} gives us $\ee[\nu_{n}^{\text{1-dim}}(Y, X)] = 2/n$. For the variance, let 
\begin{align*}
    A_n:=\sum_{\ell=2}^{n-2} \sum_{k=\ell+1}^{n-1} \frac{1}{(k-1)(n-\ell)}, \quad B_n:=\sum_{\ell=2}^{n-2} \sum_{k=\ell+1}^{n-1} \frac{1}{k-1}.
\end{align*}
Note that by Lemma~\ref{lmm:NullSimpleMoments} we have
\begin{align}
    n\var(\nu_{n}^{\text{1-dim}}(Y, X)) &= n\left(\frac{2}{n} A_n-\frac{1}{n}-\frac{2}{n(n-1)} B_n+o\left(\frac{1}{n}\right)\right) \nonumber\\
    &= 2 A_n-1-\frac{2}{n-1} B_n+n \cdot o\left(\frac{1}{n}\right). \label{eq:AnBn}
\end{align}
We have
\begin{align*}
    A_n=\sum_{2 \leq \ell<k \leq n-1} \frac{1}{(k-1)(n-\ell)}=H_{n-2}^{(2)}-\frac{2}{n-1} H_{n-2},
\end{align*}
and
\begin{align*}
    B_n = n-2-H_{n-2} .
\end{align*}
where $H_m=\sum_{j=1}^m 1/j$ and $H_m^{(2)}=\sum_{j=1}^m 1/j^2$. Plugging this into \eqref{eq:AnBn} we have
\begin{align*}
    n\var(\nu_{n}^{\text{1-dim}}(Y, X)) = 2 H_{n-2}^{(2)}-1-\frac{2 H_{n-2}+2 n-4}{n-1},
\end{align*}
and since $H_{n-2}^{(2)}\rightarrow\pi^2/6$ and $H_{n-2}\sim\log(n)$ we get 
\[
\lim_{n\rightarrow\infty}n\var(\nu_{n}^{\text{1-dim}}(Y, X)) = \frac{\pi^2}{3} - 3,
\]
which finishes the proof.
\end{proof}

\begin{lmm}\label{lmm:NullSimpleMoments}
    Suppose that $X$ and $Y$ are independent and $Y$ is continuous. Then 
    \begin{align*}
        \ee[\nu_{n}^{\text{1-dim}}(Y, X)] = \frac{2}{n},
    \end{align*}
    and 
    \begin{align*}
        \lefteqn{\var(\nu_{n}^{\text{1-dim}}(Y, X)) = }\\
        & \frac{2}{n}\sum_{\ell = 2}^{n - 2}\sum_{k = \ell + 1}^{n - 1}\frac{1}{(k - 1)(n - \ell)} - \frac{1}{n} - \frac{2}{n(n - 1)}\sum_{\ell = 2}^{n - 2}\sum_{k = \ell + 1}^{n - 1}\frac{1}{k - 1} + o(\frac{1}{n}).
    \end{align*}
\end{lmm}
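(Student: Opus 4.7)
Under the hypothesis, the rank sequence $(r_1,\ldots,r_n)$ is a uniformly random permutation of $[n]$. I will first re-index the double sum defining $\nu_n^{\text{1-dim}}$ via the substitution $\ell=r_j$: the constraints $r_j\ne 1,n$ and $i\ne j-1,j$ are exactly the requirement that $\ell$ lie strictly between $r_i$ and $r_{i+1}$. This gives
\[\nu_n^{\text{1-dim}}=1-\tfrac12 T,\qquad T=\sum_{i=1}^{n-1} U_i,\qquad U_i=\sum_{\ell=2}^{n-1} w_\ell Z_{i,\ell},\]
with $w_\ell=1/[(\ell-1)(n-\ell)]$ and $Z_{i,\ell}=\bone\{\min(r_i,r_{i+1})<\ell<\max(r_i,r_{i+1})\}$. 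The uniformity of the ordered pair $(r_i,r_{i+1})$ yields $\pp(Z_{i,\ell}=1)=2(\ell-1)(n-\ell)/[n(n-1)]$, which cancels $w_\ell$ exactly. Summing gives $\ee T=2(n-2)/n$, hence $\ee\nu_n^{\text{1-dim}}=2/n$.

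For the variance, exchangeability of positions under the uniform permutation forces $\cov(U_i,U_j)$ to depend only on $|i-j|\in\{0,1,\ge 2\}$, so
\[\var(T)=(n-1)\var(U_1)+2(n-2)\cov(U_1,U_2)+(n-2)(n-3)\cov(U_1,U_3),\]
and $\var(\nu_n^{\text{1-dim}})=\var(T)/4$. I will compute the diagonal term exactly: using $\pp(Z_{1,\ell_1}=Z_{1,\ell_2}=1)=2(\min(\ell_1,\ell_2)-1)(n-\max(\ell_1,\ell_2))/[n(n-1)]$ for $\ell_1\ne\ell_2$, the weighted double sum collapses to
\[\ee[U_1^2]=\frac{4H_{n-2}}{n(n-1)^2}+\frac{4}{n(n-1)}\,S_n,\qquad S_n:=\sum_{\ell=2}^{n-2}\sum_{k=\ell+1}^{n-1}\frac{1}{(n-\ell)(k-1)},\]
so $(n-1)\var(U_1)$ contributes $\tfrac{4S_n}{n}-\tfrac{4}{n}+o(1/n)$ to $\var(T)$. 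For the off-diagonal terms I will expand $\pp(Z_{1,\ell_1}=Z_{i_2,\ell_2}=1)$ using the uniform distribution of $(r_1,r_2,r_{i_2},r_{i_2+1})$ on ordered $3$-tuples (when $i_2=2$) or $4$-tuples (when $i_2=3$) of distinct elements of $[n]$, splitting into a ``generic'' contribution (no sample value equals $\ell_1$ or $\ell_2$) and a small number of ``coincidence'' corrections. The generic part of $\ee[U_1 U_3]$ matches $(\ee U_1)^2$ up to $O(n^{-4})$, while the coincidence corrections, amplified by $(n-2)(n-3)$, contribute the remaining $\tfrac{4S_n}{n}$ together with $-\tfrac{8}{n(n-1)}\sum_{\ell<k}(k-1)^{-1}$; a direct magnitude bound shows $2(n-2)\cov(U_1,U_2)=o(1/n)$. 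Dividing the resulting $\var(T)$ by $4$ gives the stated formula.

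The main technical obstacle is the coincidence bookkeeping for $\cov(U_1,U_3)$: individually $O(n^{-3})$ corrections aggregate to the critical $1/n$ order after summation against the weights and the amplification factor $(n-2)(n-3)$, and the signs and combinatorial prefactors must be tracked precisely so that the cancellations with $(n-1)^2(\ee U_1)^2$ yield exactly the constants $-1$ and $-\tfrac{2}{n-1}\sum(k-1)^{-1}$, rather than being absorbed into the $o(1/n)$ remainder. A cleaner alternative that I would attempt first is the level-crossing decomposition $T=\sum_\ell w_\ell N_\ell$ with $N_\ell=\sum_{i=1}^{n-1} Z_{i,\ell}$ the number of sign changes at level $\ell$; then $\var(T)=\sum_{\ell,\ell'}w_\ell w_{\ell'}\cov(N_\ell,N_{\ell'})$ reduces the problem to the covariance of a classical runs statistic on a uniformly random permutation, for which a direct multinomial computation may streamline the constants.
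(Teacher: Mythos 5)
Your setup and decomposition coincide with the paper's: you re-index the double sum by the rank level $\ell=r_j$ (correctly converting the excluded indices $i=j,j-1$ into a strict-inequality indicator), use $\pp(Z_{i,\ell}=1)=2(\ell-1)(n-\ell)/[n(n-1)]$ to get the mean, and split $\var(T)$ according to whether the two position indices coincide, are adjacent, or are at distance $\ge 2$; the paper's six terms $A_1,\dots,A_6$ are exactly this split, further subdivided by whether the two levels coincide. Your mean computation and your diagonal term $(n-1)\var(U_1)=\tfrac{4S_n}{n}-\tfrac{4}{n}+o(1/n)$ are correct and match the paper's $\ee[A_1]+\ee[A_4]$.

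There is, however, a genuine error in your off-diagonal bookkeeping. The claim that ``a direct magnitude bound shows $2(n-2)\cov(U_1,U_2)=o(1/n)$'' is false: the paper's computation of $\ee[A_2]+\ee[A_5]$ gives
\[
2(n-2)\cov(U_1,U_2)\;=\;\frac{8-4S_n}{n}+O\Bigl(\frac{\log n}{n^2}\Bigr),
\]
and since $S_n\to\pi^2/6\ne 2$ this is of exact order $1/n$; no crude bound can dispose of it, and it requires the same exact coincidence analysis you reserve for the far term. Correspondingly, your attribution of $\tfrac{4S_n}{n}-\tfrac{8}{n(n-1)}\sum_{\ell<k}(k-1)^{-1}$ to the distance-$\ge 2$ covariance is also wrong: that term actually contributes $\tfrac{8S_n-16}{n}+o(1/n)$ (the paper's $\ee[A_3]+\ee[A_6]$ minus $(n-2)(n-3)(\ee U_1)^2$). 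The two misstatements happen to cancel, so your claimed total $\tfrac{8S_n-12}{n}$ for $\var(T)$ agrees with the lemma, but the plan as written rests on a false intermediate assertion; an honest execution that drops the adjacent term and computes the far term correctly would yield $\tfrac{12S_n-20}{n}$ instead of the stated variance. You must compute the adjacent-pair covariance exactly, as the paper does.
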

\begin{proof}
    When $Y\perp X$ then $(r_1, \ldots, r_n)$ is random uniform permutation of $1, \ldots, n$. In this case $\nu_{n}^{\text{1-dim}}(Y, X)$ can be written as 
\begin{align*}
    \nu_{n}^{\text{1-dim}}(Y, X) = 1 - \frac{1}{2}\sum_{i = 1}^{n - 1}\sum_{j = 2}^{n-1}\frac{\bone\{j\in \mathcal{K}_i\}}{(j - 1)(n - j)}
\end{align*}
Let's focus on 
\begin{align*}
    A := \sum_{\ell = 2}^{n-1}\sum_{i = 1}^{n - 1}\frac{\bone\{\ell\in\mathcal{K}_i\}}{(\ell - 1)(n - \ell)}.
\end{align*}
We first work out the mean and variance of $A$.
\begin{align*}
    \ee[A] =& (n - 1)\sum_{\ell = 2}^{n-1}\frac{2(\ell - 1)(n - \ell)}{n(n-1)(\ell - 1)(n - \ell)} = 2 - \frac{4}{n}.
\end{align*}
For variance, we first look at the second moment of $A$
\begin{align*}
    A^2 =& \sum_{\ell = 2}^{n - 1}\sum_{k = 2}^{n - 1}\sum_{i = 1}^{n - 1}\sum_{j = 1}^{n - 1}\frac{\bone\{\ell\in\mathcal{K}_i\}\bone\{k\in\mathcal{K}_j\}}{(\ell - 1)(n - \ell)(k - 1)(n - k)} \\
    =& \sum_{\ell = 2}^{n - 1}\sum_{i = 1}^{n - 1}\frac{\bone\{\ell\in\mathcal{K}_i\}}{(\ell - 1)^2(n - \ell)^2} + \\
    & 2\sum_{\ell = 2}^{n - 1}\sum_{i = 1}^{n - 2}\frac{\bone\{\ell\in\mathcal{K}_i\}\bone\{\ell\in\mathcal{K}_{i+1}\}}{(\ell - 1)^2(n - \ell)^2} +\\
    & 2\sum_{\ell = 2}^{n - 1}\sum_{i = 1}^{n - 3}\sum_{j = i+2}^{n - 1}\frac{\bone\{\ell\in\mathcal{K}_i\}\bone\{\ell\in\mathcal{K}_j\}}{(\ell - 1)^2(n - \ell)^2} + \\
    & 2\sum_{\ell = 2}^{n - 2}\sum_{k = \ell + 1}^{n - 1}\sum_{i = 1}^{n - 1}\frac{\bone\{\ell\in\mathcal{K}_i\}\bone\{k\in\mathcal{K}_i\}}{(\ell - 1)(n - \ell)(k - 1)(n - k)} + \\
    & 4\sum_{\ell = 2}^{n - 2}\sum_{k = \ell + 1}^{n - 1}\sum_{i = 1}^{n - 2}\frac{\bone\{\ell\in\mathcal{K}_i\}\bone\{k\in\mathcal{K}_{i+1}\}}{(\ell - 1)(n - \ell)(k - 1)(n - k)} + \\
    & 4\sum_{\ell = 2}^{n - 2}\sum_{k = \ell + 1}^{n - 1}\sum_{i = 1}^{n - 3}\sum_{j = i + 2}^{n - 1}\frac{\bone\{\ell\in\mathcal{K}_i\}\bone\{k\in\mathcal{K}_j\}}{(\ell - 1)(n - \ell)(k - 1)(n - k)} \\
    =& A_1 + A_2 + A_3 + A_4 + A_5 + A_6.
\end{align*}

Let $H_m = \sum_{j = 1}^m 1/j$. Then
\begin{align*}
    \ee[A_1] =& \frac{2}{n}\sum_{\ell = 2}^{n - 1}\frac{1}{(\ell - 1)(n - \ell)} = \frac{4H_{n - 2}}{n(n - 1)}.
\end{align*}

\begin{align*}
    \ee[A_2] =& \frac{2}{n(n - 1)}\sum_{\ell = 2}^{n - 1}\frac{(n - \ell - 1) + (n - \ell)(\ell - 1)(\ell - 2)}{(\ell - 1)(n - \ell)} = \frac{4(n - 3)}{n(n - 1)^2}H_{n - 2}.
\end{align*}

\begin{align*}
    \ee[A_3] =& \frac{4}{n(n - 1)}\sum_{\ell = 2}^{n - 1}\frac{(\ell - 2)(n - \ell - 1)}{(\ell - 1)(n - \ell)} \\
    =& \frac{4(n - 2)}{n(n - 1)}(1 - \frac{2H_{n - 2}}{n - 2} + \frac{2H_{n - 2}}{(n-1)(n - 2)}).
\end{align*}

\begin{align*}
    \ee[A_4] =& \frac{4}{n}\sum_{\ell = 2}^{n - 2}\sum_{k = \ell + 1}^{n - 1}\frac{1}{(n - \ell)(k - 1)}.
\end{align*}

\begin{align*}
    \ee[A_5] =& \frac{4}{n(n-1)}\sum_{\ell = 2}^{n - 2}\sum_{k = \ell + 1}^{n - 1}\frac{(n - \ell) + (k - \ell - 3) + (k - 1)}{(n - \ell)(k - 1)} \\
    =& \frac{4}{n(n-1)}\sum_{\ell = 2}^{n - 2}\sum_{k = \ell + 1}^{n - 1}\frac{1}{k - 1} + \frac{2}{n - \ell} - \frac{\ell + 2}{(n - \ell)(k - 1)} \\
    =& \frac{4}{n(n-1)}\sum_{\ell = 2}^{n - 2}\sum_{k = \ell + 1}^{n - 1}\frac{2}{k - 1} + \frac{2}{n - \ell} - \frac{n + 2}{(n - \ell)(k - 1)} \\
    =& \frac{8}{n} + \frac{8}{n(n - 1)}\sum_{\ell = 2}^{n - 2}\sum_{k = \ell + 1}^{n - 1}\frac{1}{k - 1} - \frac{16}{n(n - 1)} - \frac{8H_{n - 2}}{n(n - 1)^2} \\
    & - \frac{4}{n}\sum_{\ell = 2}^{n - 2}\sum_{k = \ell + 1}^{n - 1}\frac{1}{(k - 1)(n - \ell)} - \frac{16}{n(n - 1)}\sum_{\ell = 2}^{n - 2}\sum_{k = \ell + 1}^{n - 1}\frac{1}{(k - 1)(n - \ell)}.
\end{align*}

\begin{align*}
    \ee[A_6] =& \frac{8}{n(n - 1)}\sum_{\ell = 2}^{n - 2}\sum_{k = \ell + 1}^{n - 1}\frac{(n - \ell)(k - 1) - 3(k - 1) - (n - \ell) + \ell + 2 + (k-2)}{(n - \ell)(k - 1)} \\
    =& \frac{8}{n(n - 1)}\sum_{\ell = 2}^{n - 2}\sum_{k = \ell + 1}^{n - 1} 1 - \frac{2}{n - \ell} - \frac{2}{k - 1} + \frac{n+1}{(n - \ell)(k - 1)}\\
    =& 4 - \frac{32}{n} + \frac{16H_{n-2}}{n(n - 1)} + \frac{24}{n(n - 1)} - \frac{16}{n(n - 1)}\sum_{\ell = 2}^{n - 2}\sum_{k = \ell + 1}^{n - 1}\frac{1}{k - 1} +\\
    &\qquad\frac{8(n + 1)}{n(n - 1)}\sum_{\ell = 2}^{n - 2}\sum_{k = \ell + 1}^{n - 1}\frac{1}{(k - 1)(n - \ell)}.
\end{align*}
Putting these together, we have 
\begin{align*}
    \var(A) = \frac{8}{n}\sum_{\ell = 2}^{n - 2}\sum_{k = \ell + 1}^{n - 1}\frac{1}{(k - 1)(n - \ell)} - \frac{8}{n(n - 1)}\sum_{\ell = 2}^{n - 2}\sum_{k = \ell + 1}^{n - 1}\frac{1}{k - 1} - \frac{4}{n} + o(\frac{1}{n^2}).
\end{align*}
Then note that $\ee[\nu_{n}^{\text{1-dim}}(Y, X)] = 1 - \ee[A]/2 = 2/n$, and $\var(\nu_{n}^{\text{1-dim}}(Y, X)) = \var(A)/4$. This finishes the proof.  
\end{proof}

\subsection{Proof of Theorem~\ref{thm:concentration}}
\begin{proof}
    This results immediately from Lemma~\ref{lmm2}.
\end{proof}

\subsection{Proof of Theorem~\ref{thm:rate}}
Throughout this section, we will assume that the assumptions (A1) and (A2) from Sub Section \ref{subsec:rate} hold. In the following, we restate Lemma 14.1~\cite{azadkia2021simple} and its proof for convenience. Let $\bbx_{n, 1}$ be the nearest neighbour of $\bbx_1$ among $\bbx_2, \ldots, \bbx_n$ (with ties broken at random). 

\begin{lmm}\label{lmm:nn}
    Under assumption \textnormal{(A2)}, there is some $C$ depending only on $K$ and $p$ such that

\begin{eqnarray*}
    \ee\left(\left\|\bbx_1-\bbx_{n, 1}\right\|\right) \leq \begin{cases}C n^{-1}(\log n)^2 & \text { if } p=1 \\ C n^{-1 / p}(\log n) & \text { if } p \geq 2\end{cases}
\end{eqnarray*}
\end{lmm}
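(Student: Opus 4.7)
The plan is to reduce the bound to a covering problem on the bounded support of $\bbx$. Since assumption (A2) forces $\|\bbx_1 - \bbx_{n,1}\| \leq 2K$ almost surely, I will express the expectation as the tail integral
\[
\ee(\|\bbx_1 - \bbx_{n,1}\|) = \int_0^{2K} \pp(\|\bbx_1 - \bbx_{n,1}\| > t)\, dt,
\]
and bound the integrand by a deterministic geometric argument that needs no regularity of the law of $\bbx$ beyond its support.

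For each $t > 0$, partition the ball $\{\bx \in \rr^p : \|\bx\| \leq K\}$ into measurable cells $C_1, \ldots, C_{N(t)}$ of diameter at most $t$, with $N(t) \leq C_p (K/t)^p$ for $p \geq 1$. If $\bbx_1 \in C_i$ and any $\bbx_k$ with $k \geq 2$ also lies in $C_i$, then $\|\bbx_1 - \bbx_{n,1}\| \leq t$ by the triangle inequality. Writing $q_i := \pp(\bbx \in C_i)$ and using that the $\bbx_k$ are i.i.d.,
\[
\pp\bigl(\|\bbx_1 - \bbx_{n,1}\| > t\bigr) \leq \sum_{i=1}^{N(t)} q_i (1 - q_i)^{n-1} \leq \frac{N(t)}{e(n-1)} \leq \frac{C_p K^p}{(n-1)\, t^p},
\]
where the middle inequality follows from the pointwise bound $q(1-q)^{n-1} \leq q e^{-(n-1)q} \leq 1/[e(n-1)]$, valid for every $q \in [0,1]$. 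This step is entirely distribution-free: only the geometric covering number $N(t)$ enters the bound.

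I will then split the outer integral at a threshold $t_n$ of order $(\log n / n)^{1/p}$. Below $t_n$ use the trivial bound $\pp(\cdot) \leq 1$; above $t_n$ use the covering bound. For $p \geq 2$ this produces $\int_0^{t_n} dt + \int_{t_n}^{2K} C_p K^p / [(n-1) t^p]\, dt = O(n^{-1/p}\log n)$ after choosing the implicit constant in $t_n$ appropriately. For $p = 1$ the integrand $1/t$ on $[t_n, 2K]$ contributes a $\log n$ factor, and combined with the short-range contribution yields $O(n^{-1}(\log n)^2)$, matching the stated bound in both regimes.

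The step I expect to be most delicate is the bookkeeping for $p = 1$: because of the logarithmic singularity of $1/t$ near $0$, the threshold $t_n$ must be chosen slightly larger than the $1/n$ scale one might naively use, which is what produces the squared logarithm. The covering inequality itself is elementary once one observes that the deterministic estimate $q(1-q)^{n-1} \leq 1/[e(n-1)]$ converts the random sum into a quantity depending only on $N(t)$, so no density or regularity condition on the law of $\bbx$ is required beyond (A2).
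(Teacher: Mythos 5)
Your proof is correct, and while it shares the same skeleton as the paper's argument (tail-integral representation plus a partition of the bounded support into cells of diameter $\le t$, with covering number $N(t)\le C_p(K/t)^p$), the key probabilistic step is genuinely different and in fact sharper. The paper bounds $\ee[(1-\lambda(E))^{n-1}]$ by splitting cells according to whether their mass exceeds a threshold $\delta$, which yields $(1-\delta)^{n-1}+CK^p\varepsilon^{-p}\delta$ and, after optimizing $\delta=n^{-1}\log n$, the tail bound $n^{-1}+CK^p\log n/(n\varepsilon^p)$; the extra $\log n$ there is what forces the $(\log n)^2$ in the $p=1$ case. You instead use the distribution-free pointwise estimate $q(1-q)^{n-1}\le qe^{-(n-1)q}\le 1/[e(n-1)]$ applied cell by cell, which gives $\pp(\|\bbx_1-\bbx_{n,1}\|>t)\le C_pK^p/[(n-1)t^p]$ with no logarithmic loss and no threshold to tune. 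Integrating this actually yields $O(n^{-1/p})$ for $p\ge 2$ and $O(n^{-1}\log n)$ for $p=1$, both of which imply the stated bounds; so your closing remark that the $p=1$ bookkeeping forces the squared logarithm is not quite right for your own argument --- the $(\log n)^2$ is an artifact of the paper's weaker tail estimate, not of the integration. This is a harmless overstatement, not a gap.
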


\begin{proof}
    Throughout this proof, $C$ will denote any constant that depends only on $K$ and $p$. Take $\varepsilon \in(n^{-1 / p}, 1)$. Let $B$ be the ball of radius $K$ in $\rr^p$ centred at the origin. Partition $B$ into at most $C K^p \varepsilon^{-p}$ small sets of diameter $\leq \varepsilon$. Let $E$ be the small set containing $\bbx_1$. Then
\begin{align*}
    \pp(\left\|\bbx_1-\bbx_{n, 1}\right\| \geq \varepsilon) = \pp(\bbx_2 \notin E, \ldots, \bbx_n \notin E).
\end{align*}
Now note that
\begin{align*}
    \pp(\bbx_2 \notin E, \ldots, \bbx_n \notin E \mid \bbx_1)=(1-\pp(\bbx_2 \in E \mid \bbx_1))^{n-1}=(1-\lambda(E))^{n-1},
\end{align*}
where $\lambda$ is the law of $\bbx$. Let $A$ be the collection of all small sets with $\lambda$-mass less than $\delta$. Since there are at most $C K^p \varepsilon^{-p}$ small sets, we get
\begin{align*}
\ee\left[(1-\lambda(E))^{n-1}\right] \leq(1-\delta)^{n-1}+\pp\left(\bbx_1 \in A\right) \leq(1-\delta)^{n-1}+C K^p \varepsilon^{-p} \delta.
\end{align*}
This gives
\begin{align*}
\pp(\left\|\bbx_1-\bbx_{n, 1}\right\| \geq \varepsilon) \leq (1-\delta)^{n-1} + C K^p \varepsilon^{-p} \delta.
\end{align*}

Now choosing $\delta = n^{-1} \log n$, we get
\begin{align*}
\pp(\left\|\bbx_1-\bbx_{n, 1}\right\| \geq \varepsilon) \leq \frac{1}{n} + \frac{C K^p\log n}{n \varepsilon^p}.
\end{align*}
Thus,
\begin{align*}
\ee(\left\|\bbx_1-\bbx_{n, 1}\right\|) \leq & n^{-1 / p}+\int_{n^{-1 / p}}^{2K}\pp(\left\|\bbx_1 - \bbx_{n, 1}\right\| \geq \varepsilon) d \varepsilon \\
\leq & n^{-1 / p}+\frac{C K^p\log n}{n} \int_{n^{-1 / p}}^{2K} \varepsilon^{-p} d \varepsilon.
\end{align*}

Finally, the last term is bounded by $C K n^{-1}(\log n)^2$ when $p = 1$, and by $C K^p n^{-1/p} \log n$ when $p \ge 2$.

\end{proof}

\begin{lmm}\label{lmm:rate}
    Let $C$ and $\beta$ be as in assumption \textnormal{(A1)} and $K$ be as in assumption \textnormal{(A2)}. Then there are $K_1, K_2$ and $K_3$ depending only on $C$, $\beta, K$ and $p$ such that for any $t \geq 0$,

\begin{eqnarray*}
    \pp\left(\abs{\nu_n - \nu} \geq K_1 n^{-{1/p\vee 2}}(\log n)^{\bone\{p=1\}+1} + t\right) \leq K_2 e^{-K_3 n t^2/\log n}
\end{eqnarray*}
\end{lmm}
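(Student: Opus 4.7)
The plan is to decompose the error via triangle inequality as
\[
|\nu_n - \nu| \;=\; |Q_n - Q| \;\leq\; |Q_n - \ee Q_n| \;+\; \ee|Q_n - Q_n'| \;+\; |\ee Q_n' - Q|,
\]
where $Q_n$, $Q_n'$, $Q$ are as in~\eqref{eq:Qn}--\eqref{eq:Q}, and bound each of the three pieces separately. The first is a stochastic fluctuation term controlled by bounded differences; the remaining two constitute the bias and are where assumptions \textnormal{(A1)} and \textnormal{(A2)} enter.

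For the fluctuation piece I would refine the McDiarmid argument of Lemma~\ref{lmm2}, which controls the change in $Q_n$ induced by replacing a single $(\bbx_k, Y_k)$. That analysis already yields the bounded-difference constant $C\log n/n$; the corresponding McDiarmid tail is $e^{-Cnt^2/\log^2 n}$, and tightening the exponent to the claimed $nt^2/\log n$ calls for either splitting the perturbations into a typical part of size $O(1/n)$ and a rare tail part carrying the extra $\log$, or invoking an Efron--Stein variance bound combined with a sub-exponential tail estimate for the terms whose ranks fall near the extremes. For the empirical-CDF bias $\ee|Q_n - Q_n'|$, the computation in Step I of the proof of Lemma~\ref{lmm1} already supplies an $O(\sqrt{\log n/n})$ bound, by coupling the Dvoretzky--Kiefer--Wolfowitz--type concentration $\|F_{n-1} - F\|_\infty \lesssim \sqrt{\log n/n}$ with a bulk/tail split in the integration against the singular weight.

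The delicate piece is the nearest-neighbor bias $|\ee Q_n' - Q|$. Conditioning on $Y_j, \bbx_i, \bbx_{N^{-j}(i)}$ gives
\[
\ee[\bone\{Y_j \in \mathcal{I}_i^j\} \mid Y_j, \bbx_i, \bbx_{N^{-j}(i)}] \;=\; F_{\bbx_i}(Y_j) + F_{\bbx_{N^{-j}(i)}}(Y_j) - 2\, F_{\bbx_i}(Y_j)\, F_{\bbx_{N^{-j}(i)}}(Y_j),
\]
which differs from the idealized $2\, F_{\bbx_i}(Y_j)(1 - F_{\bbx_i}(Y_j))$ by an amount that, thanks to \textnormal{(A1)} together with the boundedness of $\bbx$ in \textnormal{(A2)}, is pointwise dominated by $C\, \|\bbx_i - \bbx_{N^{-j}(i)}\|\, \min\{F(Y_j), 1 - F(Y_j)\}$. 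Crucially, this $\min\{F, 1-F\}$ factor cancels the singular weight $1/(F(Y_j)(1-F(Y_j)))$ appearing in $Q_n'$, so the resulting integrand is uniformly bounded in $Y_j$. Taking expectations and invoking Lemma~\ref{lmm:nn} to estimate $\ee\|\bbx_i - \bbx_{N^{-j}(i)}\|$ then gives $|\ee Q_n' - Q| \lesssim n^{-1}(\log n)^2$ when $p=1$ and $n^{-1/p}\log n$ when $p \geq 2$, matching the bias term $n^{-1/(p \vee 2)}(\log n)^{\bone\{p=1\}+1}$ stated in the lemma.

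The main obstacle is this nearest-neighbor bias: the entire argument rests on the tight cancellation between the tail blow-up of the inverse-variance weight and the tail-sharpening provided by the $\min\{F, 1-F\}$ factor in \textnormal{(A1)}, without which the bias integral would diverge at the endpoints of the support of $Y$. A secondary technicality is quantifying the tail event that $\|\bbx_i - \bbx_{N^{-j}(i)}\|$ is atypically large, which is addressed through the probability bound inside the proof of Lemma~\ref{lmm:nn}. Assembling the three pieces, the bias supplies the deterministic $K_1\, n^{-1/(p \vee 2)} (\log n)^{\bone\{p=1\}+1}$ term and the fluctuation supplies the exponential tail $K_2\, e^{-K_3 n t^2/\log n}$, yielding the claimed bound.
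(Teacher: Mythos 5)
Your proposal follows essentially the same route as the paper's proof: the identical three-term decomposition, with the nearest-neighbour bias controlled exactly as you describe (the $\min\{F(t),1-F(t)\}$ factor from (A1) cancelling the inverse-variance weight, then Lemma~\ref{lmm:nn}), the empirical-CDF term $\ee|Q_n-Q_n'|$ bounded via Step I of Lemma~\ref{lmm1} with $\Delta \asymp \log n/n$, and the fluctuation term via the bounded-difference bound of Lemma~\ref{lmm2}. Your concern about the exponent is well placed but requires no new argument to match the paper: the paper simply invokes Lemma~\ref{lmm2}, whose tail is $e^{-C n t^2/\log^2 n}$, and then states the conclusion with $\log n$ in the denominator without comment, so the tightening you sketch is not actually carried out there (and the weaker $\log^2 n$ exponent would suffice for the downstream application in Lemma~\ref{lmm:rateEprime}).
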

\begin{proof}
    Recall $Q_n^\prime$ defined in \eqref{eq:Qpn}. Let $\mathcal{F}_\bbx$ be the $\sigma$-algebra generate by $\bbx_1, \ldots, \bbx_n$. Since $F_n(Y_j) = 1/n$ implies $\bone\{Y_j\in\mathcal{I}_i^j\} = 0$, we have
    \begin{align*}
        \ee[\big(\frac{n - n_0}{n}\big)Q_n^\prime] &= \frac{1}{2}\ee[\frac{\bone\{Y_j\in\mathcal{I}_i^j\}\bone\{n^{-1} < F_n(Y_j) < 1\}}{F(Y_j)(1 - F(Y_j))}] \\
        &= \frac{1}{2}\ee\left[\ee[\frac{\bone\{Y_j\in\mathcal{I}_i^j\}\bone\{n^{-1} < F_n(Y_j) < 1\}}{F(Y_j)(1 - F(Y_j))}\mid Y_j, \mathcal{F}_\bbx]\right] \\
        &= \frac{1}{2}\ee\left[\frac{\ee[\bone\{Y_j\in\mathcal{I}_i^j\}\bone\{F_n(Y_j) < 1\}\mid Y_j, \mathcal{F}_\bbx]}{F(Y_j)(1 - F(Y_j))}\right].
    \end{align*}
    In addition, note that 
    \begin{align*}
        Q = \frac{1}{2\mu(\tilde{S})}\ee\left[\frac{\ee[\bone\{Y_j\in\mathcal{I}_i^\prime\}\bone\{F_n(Y_j) < 1\}\mid Y_j, \mathcal{F}_\bbx]}{F(Y_j)(1 - F(Y_j))}\right],
    \end{align*}
    where $\mathcal{I}_i^\prime = [\min\{Y_i, Y_i^\prime\}, \max\{Y_i, Y_i^\prime\}]$ such that $Y_i$ and $Y_i^\prime$ are i.i.d. given $\bbx_i$. Note that 
    \begin{align*}
        \ee[\bone\{Y_j\in\mathcal{I}_i^\prime\}\mid Y_j, \mathcal{F}_\bbx] =& 1 - F_{\bbx_i}^2(Y_j) - (1 - F_{\bbx_i}(Y_j))^2, \\
        \ee[\bone\{Y_j\in\mathcal{I}_i^j\}\mid Y_j, \mathcal{F}_\bbx] =& 1 - F_{\bbx_i}(Y_j)F_{\bbx_{N^{-j}(i)}}(Y_j) - (1 - F_{\bbx_i}(Y_j))(1 - F_{\bbx_{N^{-j}(i)}}(Y_j)).
    \end{align*}
    Assumption (A1) yields
    \begin{align*}
        \lefteqn{\abs{F_{\bbx_i}(Y_j) - F_{\bbx_{N^{-j}(i)}}(Y_j)} \leq }\\
        &\qquad\qquad C(1 + \|\bbx_{N^{-j}(i)}\|^\beta + \|\bbx_i\|^\beta)\|\bbx_{N^{-j}(i)} - \bbx_i\|\min\{F(Y_j), 1 - F(Y_j)\}.
    \end{align*}
    By assumption (A2) there exists $K$ such that $\|\bbx_i\|, \|\bbx_{N^{-j}(i)}\|\leq K$. This gives us 
    \begin{align*}
        \abs{\ee[\big(\frac{n - n_0}{n}\big)Q_n^\prime] - \mu(\tilde{S})Q} &= \left|\ee\left[\frac{\ee[(2F_{\bbx_i}(Y_j) - 1)(F_{\bbx_{N^{-j}(i)}} - F_{\bbx_i}(Y_j))\mid \mathcal{F}_\bbx, Y_j]}{F(Y_j)(1 - F(Y_j))}\right]\right| \\
        & \leq  C K^\beta \ee[\|\bbx_i - \bbx_{N^{-j}(i)}\|]. 
    \end{align*}
    Therefore by Lemma~\ref{lmm:nn} 
    \begin{eqnarray*}
    \abs{\ee[\frac{(1 - n_0/n)}{\mu(\tilde{S})}Q_n^\prime] - Q} \leq \begin{cases}C n^{-1}(\log n)^2 & \text { if } p=1 \\ C n^{-1 / p}(\log n) & \text { if } p \geq 2\end{cases}.
\end{eqnarray*}

\begin{align*}
   \abs{\ee[Q_n^\prime] - Q} &\leq \ee\bigg[\abs{1 - \frac{1 - n_0/n}{\mu(\tilde{S})}}\big(\frac{n - n_0}{n}\big) Q_n^\prime\bigg] +  \left|\ee\bigg[\frac{(1 - n_0/n)}{\mu(\tilde{S})} Q_n^\prime\bigg] - Q\right|.
\end{align*}
Note that the first term on the right-hand side is $O(n^{-1/2})$ since $\big(\frac{n - n_0}{n}\big) Q_n^\prime$ is uniformly integrable and $n_0/n$ converges at the rate of $1/\sqrt{n}$ to $\mu(\tilde{S})$. Following the proof of Theorem~\ref{thm:consistency}, with the choice of $\Delta = c_1^{-1}\log(n)/(n-1)$ in \eqref{eqn:Delta} we have
\begin{eqnarray*}
    \ee[\abs{Q_n - Q_n^\prime}] \leq C\sqrt{\frac{\log n}{n}}.
\end{eqnarray*}
Finally, using Lemma~\ref{lmm2} and noting that $\nu_n = 1 - Q_n$ and $\nu = 1 - Q$ finishes the proof.
\end{proof}
Lemma~\ref{lmm:rate} implies
\begin{eqnarray*}
    \abs{\nu_n - \nu} = \frac{(\log n)^{1 + \bone\{p = 1\}}}{n^{1/(p\vee 2)}},
\end{eqnarray*}
which gives the proof of Theorem~\ref{thm:rate}.

\subsection{Proof of Proposition~\ref{prop:reg}}
\begin{proof}
We have the conditional CDF of $Y$ as
\[F_{Y \mid \bbx = \bx}(t) = \int_{-\infty}^t f_{Y \mid \bbx = \bx}(u) d u.
\]
For fixed $t$ and $\bx, \bx^{\prime}$, integrate along the line segment $\bx_\theta:=\bx + \theta(\bx^{\prime}-\bx), \theta \in[0,1]$ :
\begin{align*}
F_{Y \mid \bbx = \bx}(t)-F_{Y \mid \bbx = \bx}(t)=\int_0^1 \frac{d}{d \theta} F_{Y \mid \bbx = \bx_\theta}(t) d \theta =\int_0^1 \nabla_\bx F_{Y \mid \bbx = \bx_\theta}(t) \cdot(\bx^{\prime}-\bx) d \theta,
\end{align*}
so
\begin{align*}
|F_{Y \mid \bbx = \bx^{\prime}}(t)-F_{Y \mid \bbx = \bx}(t)|\leq\|\bx^{\prime}-\bx\|\sup_{\theta \in[0,1]}\|\nabla_\bx F_{Y \mid \bbx = \bx_\theta}(t)\|.
\end{align*}
Then using~\eqref{eq:cond2} we have
\begin{align*}
\nabla_\bx F_{Y \mid \bbx = \bx}(t)=\int_{-\infty}^t \nabla_\bx f_{Y \mid \bbx = \bx}(u) d u.
\end{align*}
Then
\begin{align*}
\|\nabla_\bx F_{Y \mid \bbx = \bx}(t \mid \bx)\| \leq K_1(1+\|\bx\|^\beta) \int_{-\infty}^t f(u) d u=K_1(1+\|\bx\|^\beta) F(t)
\end{align*}
Similarly, integrating from $t$ to $\infty$ gives the same bound with $1-F(t)$. This gives us 
\[
\|\nabla_\bx F_{Y \mid \bbx = \bx}(t)\| \leq K_1(1+\|\bx\|^\beta) \min \{F(t), 1-F(t)\} .
\]
Along the line segment between $\bx$ and $\bx^{\prime},\|\bx_\theta\|$ is bounded by $\|\bx\|+\|\bx^{\prime}\|$, so
\begin{align*}
\sup_\theta(1+\|\bx_\theta\|^\beta) \leq c_\beta(1+\|\bx\|^\beta+\|\bx^{\prime}\|^\beta),
\end{align*}
for some constant $c_\beta$. Putting this together,
\begin{align*}
|F_{Y \mid \bbx = \bx^{\prime}}(t)-F_{Y \mid \bbx = \bx}(t)| \leq C(1+\|\bx\|^\beta+\|\bx^{\prime}\|^\beta)\|\bx-\bx^{\prime}\| \min \{F(t), 1-F(t)\},
\end{align*}
with $C = c_\beta K_1$. 
\end{proof}

\subsection{Proof of Theorem~\ref{thm:FOCI}}
Let $j_1, j_2, \ldots, j_p$ be the complete ordering of all variables by FORD. Let $V_0 = \emptyset$, and for each $1\leq k \leq p$, let $V_k := \{j_1, \ldots, j_k\}$. For $k > p$, let $V_k := V_p$. Note that for each $k$, $j_k$ is the index $j\not\in V_{k - 1}$ that maximizes $\nu_n(Y, \bbx_{V_{k-1}}\cup\{j\})$. Let $K = \lfloor 4/\delta + 2\rfloor$. Let $E^\prime$ be the event that $\abs{\nu_n(Y, \bbx_{V_k}) - \nu(Y, \bbx_{V_k})} \leq \delta/8$ for all $1 \leq k \leq K$, and let $E$ be the event that $V_K$ is sufficient.

\begin{lmm}\label{lmm:Eprime}
    Suppose that $E^\prime$ has happened, and for some $1\leq k\leq K$
    \begin{eqnarray}\label{eq:Eprime}
        \nu_n(Y, \bbx_{V_k}) - \nu_n(Y, \bbx_{V_{k - 1}}) \leq \delta/2.
    \end{eqnarray}
    Then $V_{k - 1}$ is sufficient.
\end{lmm}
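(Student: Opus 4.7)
The plan is a short proof by contradiction. Suppose $E'$ holds and \eqref{eq:Eprime} is satisfied, but $V_{k-1}$ fails to be sufficient. Because $\lvert V_{k-1}\rvert = k-1 \le K-1$, the definition of $\delta$ applied to the insufficient set $V_{k-1}$ produces an index $j^* \notin V_{k-1}$ with
\begin{equation*}
  \nu(Y, \bbx_{V_{k-1} \cup \{j^*\}}) - \nu(Y, \bbx_{V_{k-1}}) \ge \delta.
\end{equation*}
Since FORD defines $j_k$ as the maximizer of $\nu_n(Y, \bbx_{V_{k-1} \cup \{j\}})$ over $j \notin V_{k-1}$, we also have
\begin{equation*}
  \nu_n(Y, \bbx_{V_k}) \;\ge\; \nu_n(Y, \bbx_{V_{k-1} \cup \{j^*\}}).
\end{equation*}

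Next I would use $E'$ to convert $\nu_n$ values into $\nu$ values on both $V_{k-1}$ and $V_{k-1} \cup \{j^*\}$, each conversion costing $\delta/8$:
\begin{equation*}
  \nu_n(Y, \bbx_{V_k}) - \nu_n(Y, \bbx_{V_{k-1}})
  \;\ge\; \nu(Y, \bbx_{V_{k-1} \cup \{j^*\}}) - \nu(Y, \bbx_{V_{k-1}}) - \tfrac{\delta}{4}
  \;\ge\; \tfrac{3\delta}{4} \;>\; \tfrac{\delta}{2},
\end{equation*}
contradicting \eqref{eq:Eprime}. Hence $V_{k-1}$ must be sufficient. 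The threshold $\delta/8$ is chosen precisely so that the two applications of the estimation bound leave room inside the $\delta/2$ gap assumed in \eqref{eq:Eprime}.

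The one place where care is required is the interpretation of $E'$. As literally stated in the lemma, $E'$ controls $\lvert \nu_n - \nu \rvert$ only along the random sequence $V_0, V_1, \dots, V_K$, while the argument above also needs the bound at $V_{k-1} \cup \{j^*\}$, which need not coincide with any $V_i$. The natural fix, and the one carried out in the proof of Theorem~\ref{thm:FOCI}, is to strengthen $E'$ to the event that $\lvert \nu_n(Y, \bbx_S) - \nu(Y, \bbx_S) \rvert \le \delta/8$ for every $S \subseteq \{1,\dots,p\}$ with $\lvert S \rvert \le K$. Assumptions (A1$^\prime$) and (A2$^\prime$) are stated uniformly over subsets of size at most $1/\delta + 2 \ge K$, so Theorem~\ref{thm:rate} applies to each such $S$; a union bound over the $O(p^K)$ subsets then delivers the exponential tail estimate $1 - L_1 p^{L_2} e^{-L_3 n}$ appearing in the theorem.
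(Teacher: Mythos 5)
Your proposal is correct and is essentially the contrapositive of the paper's own argument: the paper argues directly that, for every $j \notin V_{k-1}$, the maximizing property of $j_k$ together with two applications of $E'$ gives $\nu(Y,\bbx_{V_{k-1}\cup\{j\}}) - \nu(Y,\bbx_{V_{k-1}}) \le 3\delta/4 < \delta$, so by the definition of $\delta$ the set $V_{k-1}$ cannot be insufficient. Your closing observation is also well taken: the paper's stated $E'$ only controls the deviations along $V_1,\dots,V_K$, yet its proof (like yours) needs the bound at $V_{k-1}\cup\{j\}$ for arbitrary $j$; the proof of Lemma~\ref{lmm:rateEprime} does in fact union-bound over all subsets of size at most $K$, so the strengthened event you describe is the one actually being used.
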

\begin{proof}
    Take any $k \leq K$ such that~\ref{eq:Eprime} holds. If $k > p$ there is nothing to prove. So let us assume that $k\leq p$. Since $E^\prime$ has happened, this implies that for any $j\not\in V_{k - 1}$,
    \begin{align*}
        \nu(Y, \bbx_{V_{k - 1}\cup\{j\}}) - \nu(Y, \bbx_{V_{k-1}}) \leq \nu_n(Y, \bbx_{V_k}) - \nu_n(Y, \bbx_{V_{k-1}}) + \frac{\delta}{4} \leq \frac{3\delta}{4}.
    \end{align*}
    Then note that by definition of $\delta$, $V_{k - 1}$ must be a sufficient set.
\end{proof}
\begin{lmm}\label{lmm:EprimeGivesE}
    The event $E^\prime$ implies $E$.
\end{lmm}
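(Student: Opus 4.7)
The strategy is a clean dichotomy based on the telescoping decomposition
\[
\nu_n(Y,\bbx_{V_K}) \;=\; \nu_n(Y,\bbx_{V_0}) \;+\; \sum_{k=1}^{K}\bigl[\nu_n(Y,\bbx_{V_k})-\nu_n(Y,\bbx_{V_{k-1}})\bigr],
\]
together with the previous Lemma~\ref{lmm:Eprime}. First I would work on the event $E'$ throughout, so that $\abs{\nu_n(Y,\bbx_{V_k})-\nu(Y,\bbx_{V_k})}\le \delta/8$ for every $1\le k\le K$. In particular, since $\nu\in[0,1]$ by Theorem~\ref{thm:NuProperties}, this immediately yields $\nu_n(Y,\bbx_{V_K})\le 1+\delta/8\le 2$ (using $\delta\le 1$).

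Next I would split into two cases depending on whether the empirical increments are ever small. If there exists some $1\le k\le K$ with $\nu_n(Y,\bbx_{V_k})-\nu_n(Y,\bbx_{V_{k-1}})\le \delta/2$, then Lemma~\ref{lmm:Eprime} applies and yields that $V_{k-1}$ is sufficient. The remaining ingredient here is the monotonicity property that any superset of a sufficient subset is itself sufficient: if $Y$ and $\bbx_{V^c}$ are conditionally independent given $\bbx_V$, then because $(V')^c\subseteq V^c$ for $V'\supseteq V$, the same conditional independence forces $Y$ and $\bbx_{(V')^c}$ to be conditionally independent given $\bbx_{V'}$. Applying this with $V=V_{k-1}$ and $V'=V_K$ gives that $V_K$ is sufficient, i.e.\ $E$ holds.

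In the complementary case, every increment strictly exceeds $\delta/2$. Telescoping and using $\nu_n(Y,\bbx_{V_0})=\nu_n(Y,\emptyset)=0$ gives
\[
\nu_n(Y,\bbx_{V_K}) \;>\; K\cdot \frac{\delta}{2}.
\]
Since $K=\lfloor 4/\delta+2\rfloor > 4/\delta+1$, this lower bound exceeds $2+\delta/2>2$, contradicting the upper bound $\nu_n(Y,\bbx_{V_K})\le 2$ derived above. Hence this case cannot occur on $E'$, completing the implication $E'\Rightarrow E$.

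The only nontrivial conceptual step is the monotonicity claim about sufficient subsets; everything else is bookkeeping. I expect this is either already established earlier in the paper (it is essentially the first half of the stated property $\nu(Y,\bbx_{V'})\ge\nu(Y,\bbx_V)$ for $V'\supseteq V$) or verifiable in one line from the definition. If not explicitly available, I would insert a short self-contained verification using the tower property of conditional expectation to make the proof fully rigorous.
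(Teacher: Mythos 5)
Your proposal is correct and follows essentially the same route as the paper: the same dichotomy between ``some empirical increment is $\le \delta/2$'' (handled by Lemma~\ref{lmm:Eprime}) and ``all increments exceed $\delta/2$'' (ruled out by telescoping), the only cosmetic difference being that you telescope $\nu_n$ and bound it by $2$ on $E'$, whereas the paper first converts each empirical increment into a population increment of at least $\delta/4$ and contradicts $\nu \le 1$. Your explicit justification that a superset of a sufficient set is sufficient (via weak union) fills a step the paper uses implicitly when passing from ``$V_{k-1}$ is sufficient'' to ``$V_K$ is sufficient.''
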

\begin{proof}
    Suppose $E^\prime$ has happened but there is no $k$ such that~\ref{eq:Eprime} is valid. Therefore for all $1\leq k\leq K$ we have 
    \begin{align*}
        \nu_n(Y, \bbx_{V_k}) - \nu_n(Y, \bbx_{V_{k - 1}}) > \delta/2.
    \end{align*}
    This implies that 
    \begin{align*}
        \nu(Y,\bbx_{V_k}) - \nu(Y,\bbx_{V_{k - 1}}) \geq \nu_n(Y, \bbx_{V_k}) - \nu_n(Y, \bbx_{V_{k - 1}}) - \frac{\delta}{4} \geq \frac{\delta}{4}.
    \end{align*}
    This gives
    \begin{align*}
        \nu(Y, \bbx_{V_K}) =& \sum_{k = 1}^K \nu(Y, \bbx_{V_k}) - \nu(Y, \bbx_{V_{k - 1}}) \\
        \geq & \frac{K\delta}{4} \geq \left(\frac{4}{\delta} + 2\right)\frac{\delta}{4} > 1.
    \end{align*}
    Note that this contradicts the fact that $\nu(Y, \bbx_{V_k})\in[0, 1]$. Therefore, this shows that~\ref{eq:Eprime} must hold for some $k \leq K$. Therefore, Lemma~\ref{lmm:Eprime} implies that $V_K$ is sufficient. 
\end{proof}

\begin{lmm}\label{lmm:rateEprime}
There are positive constants $L_1, L_2$ and $L_3$ depending only on $C, \beta, K$ and $\delta$ such that 
    \begin{align*}
        \pp(E^\prime) \geq 1 - L_1p^{L_2}\exp(-L_3n/\log n).
    \end{align*}
\end{lmm}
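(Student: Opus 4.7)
The strategy is a union bound over all subsets of $\{1,\dots,p\}$ of size at most $K=\lfloor 4/\delta+2\rfloor$, combined with the concentration inequality of Lemma~\ref{lmm2} and the bias estimate of Theorem~\ref{thm:rate} (or rather the stronger form established in Lemma~\ref{lmm:rate}). The key observation is that although FORD produces a \emph{data-dependent} chain $V_0\subset V_1\subset\cdots\subset V_K$, each $V_k$ is deterministically contained in the fixed collection $\ms:=\{V\subseteq\{1,\ldots,p\}:|V|\leq K\}$, whose cardinality satisfies $|\ms|\leq (K+1)p^K$.

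First I would write
\[
\pp((E^\prime)^c)\leq\sum_{V\in\ms}\pp\bigl(|\nu_n(Y,\bbx_V)-\nu(Y,\bbx_V)|>\delta/8\bigr).
\]
For each fixed $V\in\ms$, since $|V|\leq K$ depends only on $\delta$, assumptions (A1$^\prime$) and (A2$^\prime$) restricted to $\bbx_V$ yield exactly the hypotheses (A1) and (A2) that drive Lemma~\ref{lmm:rate}. Therefore the bias satisfies
\[
|\ee[\nu_n(Y,\bbx_V)]-\nu(Y,\bbx_V)|\leq C^\prime\,\frac{(\log n)^{2}}{n^{1/(K\vee 2)}},
\]
with $C^\prime$ depending only on $C,\beta,K,\delta$. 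For $n$ exceeding a threshold $n_0=n_0(C,\beta,K,\delta)$, this bias is at most $\delta/16$, so
\[
\pp\bigl(|\nu_n(Y,\bbx_V)-\nu(Y,\bbx_V)|>\delta/8\bigr)\leq\pp\bigl(|\nu_n(Y,\bbx_V)-\ee[\nu_n(Y,\bbx_V)]|>\delta/16\bigr).
\]
Applying Lemma~\ref{lmm2} to $\bbx_V$ with $t=\delta/16$ then gives a bound of the form $C_1\exp(-C_2 n\delta^2/\log^2 n)$, with the absolute constants independent of $V$. Summing over $\ms$ yields
\[
\pp((E^\prime)^c)\leq(K+1)p^{K}C_1\exp\bigl(-C_3\, n/\log^2 n\bigr),
\]
which, after absorbing the polynomial $\log n$ factor into the $L_1 p^{L_2}$ prefactor for $n$ large (and handling $n\le n_0$ by enlarging $L_1$), produces the claimed bound with $L_2=K$.

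The main technical obstacle is ensuring that the constants appearing in Lemma~\ref{lmm2} and Lemma~\ref{lmm:rate} are \emph{uniform over the collection $\ms$}. Both results are stated for a single ambient dimension $p$, yet here we apply them to every $V\in\ms$ simultaneously, and their constants depend on the intrinsic dimension $|V|$ and on the constants $C,\beta,K$ in the Lipschitz-type assumption. Because $|V|\leq K$ and $K$ is a function of $\delta$ alone, and because (A1$^\prime$)--(A2$^\prime$) are imposed uniformly for all such $V$ with the \emph{same} constants $C,\beta,K$, the constants in both auxiliary results can be taken uniform over $\ms$. A minor additional care is needed in the concentration argument: the bounded-difference constants in Lemma~\ref{lmm2} depend on $|V|$ through the volume constant $C(|V|)$ from the nearest-neighbour combinatorics, but these are bounded by $C(K)$ uniformly on $\ms$. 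Once this uniformity is secured, the union bound closes the argument.
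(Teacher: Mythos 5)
Your proposal follows essentially the same route as the paper: a union bound over the at most $O(p^{K})$ subsets of size $\le K$, the uniform deviation bound of Lemma~\ref{lmm:rate} applied with $t=\delta/16$ once $n$ is large enough that the bias term $L_1 n^{-1/(K\vee 2)}(\log n)^2$ falls below $\delta/16$, and a trivial enlargement of the constants to cover small $n$. The only loose point is your claim that the $\exp(-C_3 n/\log^2 n)$ tail can be "absorbed" into the prefactor to yield $\exp(-L_3 n/\log n)$ — that conversion is not valid since the former decays more slowly — but this exactly mirrors the paper's own unexplained jump from the $\log^2 n$ exponent of Lemma~\ref{lmm2} to the $\log n$ exponent stated in Lemma~\ref{lmm:rate}, so it is not a gap you introduced.
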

\begin{proof}
    By assumptions (A1$^\prime$) and (A2$^\prime$), and Lemma~\ref{lmm:rate}, there exists $L_1, L_2$ and $L_3$ such that for any $V$ of size at most $K$ and any $t\geq 0$,
    \begin{eqnarray*}
        \pp(\abs{\nu_n(Y, \bbx_V) - \nu(Y, \bbx_V)}\geq L_1n^{-1/K\vee 2}(\log n)^2 + t) \leq L_2\exp(-L_3 n t^2/\log n).
    \end{eqnarray*}
    Let the event on the left be $A_{V,t}$ and $A_t := \bigcup_{\abs{V}\leq K}A_{V,t}$. By union bound we have $\pp(A_t)\leq L_2 p^K\exp(-L_3nt^2/\log n)$. Choose $t = \delta/16$. If $n$ is large enough so that 
    \begin{align}\label{eq:largen}
        L_1n^{-1/K\vee 2}(\log n)^2 \leq \frac{\delta}{16},
    \end{align}
    then the above bound implies that 
    \begin{align}\label{eq:largen2}
        \pp(E^\prime) \geq 1 - L_2p^K\exp(-L_4 n/\log n).
    \end{align}
    Equivalently, one can write ~\ref{eq:largen} as $n \geq L_5$ for some large $L_5$. Then we choose $L_6\geq L_2$ such that for any $n < L_5$,
    \begin{eqnarray*}
        L_6p^K\exp(-L_3 n/\log n) \geq 1. 
    \end{eqnarray*}
    Therefore for $n < L_5$, we have the trivial bound $\pp(E^\prime)\geq 1 - L_6p^K\exp(-L_3 n)$. Combining this with~\ref{eq:largen2} finishes the proof.
\end{proof}
\begin{lmm}\label{lmm:EprimeSufficient}
Event $E^\prime$ implies that $\hat{V}$ is sufficient.
\end{lmm}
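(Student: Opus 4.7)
The plan is to separate FORD's stopping index $\hat{k}$ (so that $\hat{V} = V_{\hat{k}}$) into two regimes, exploiting the two tools already at hand: Lemma~\ref{lmm:Eprime}, which converts a small increment of $\nu_n$ into sufficiency of the previous set, and Lemma~\ref{lmm:EprimeGivesE}, which guarantees that $V_K$ is sufficient under $E^\prime$.

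Before starting the case split, I would record a monotonicity principle for sufficient sets: if $V\subseteq V^\prime$ and $V$ is sufficient, then $V^\prime$ is also sufficient. This is a direct consequence of the graphoid weak-union axiom applied to $Y \perp \bbx_{V^c}\mid \bbx_V$ after splitting $\bbx_{V^c}$ as $(\bbx_{V^\prime\setminus V},\bbx_{(V^\prime)^c})$, which yields $Y\perp \bbx_{(V^\prime)^c}\mid \bbx_{V^\prime}$.

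Next I would analyse the two cases. In Case 1, $\hat{k}\leq K-1$, FORD's termination rule supplies $\nu_n(Y,\bbx_{V_{\hat{k}+1}})\leq \nu_n(Y,\bbx_{V_{\hat{k}}})$ (using the convention $\nu_n(Y,\bbx_{\emptyset}):=0$ to cover the boundary $\hat{k}=0$, which the algorithm invokes when $\nu_n(Y,X_{j_1})\leq 0$). Since $\hat{k}+1\in\{1,\dots,K\}$ and the corresponding increment is at most $0\leq \delta/2$, Lemma~\ref{lmm:Eprime} applied with $k=\hat{k}+1$ immediately yields that $V_{\hat{k}}=\hat{V}$ is sufficient. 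In Case 2, $\hat{k}\geq K$, we have $\hat{V}=V_{\hat{k}}\supseteq V_K$; Lemma~\ref{lmm:EprimeGivesE} gives sufficiency of $V_K$ under $E^\prime$, and the monotonicity principle then promotes this to sufficiency of $\hat{V}$.

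The main hurdle, such as it is, is the bookkeeping for the boundary $\hat{k}=0$ and confirming that the index range in Lemma~\ref{lmm:Eprime} covers it. Once that detail is in place, both cases are essentially one-line arguments, and there is no further technical content to supply.
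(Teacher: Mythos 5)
Your proof is correct and follows essentially the same route as the paper's: the identical case split on the stopping index, with Lemma~\ref{lmm:Eprime} handling early termination and Lemma~\ref{lmm:EprimeGivesE} handling termination at step $K$ or later. Your explicit weak-union justification of the monotonicity of sufficiency, and your handling of the boundary case $\hat{k}=0$, merely spell out steps the paper leaves implicit.
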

\begin{proof}
    Suppose that $E^\prime$ has happened. First, suppose that FORD has stopped at step $K$ or later. Then $V_K\subseteq \hat{V}$ and, therefore, Lemma~\ref{lmm:EprimeGivesE} implies that $E$ has also happened, and therefore $\hat{V}$ is sufficient. Next, suppose that FORD has stopped at step $k - 1 < K$. Then, by definition of the stopping rule, we have 
    \begin{align*}
        \nu_n(Y, \bbx_{V_k}) \leq \nu_n(Y, \bbx_{V_{k - 1}}),
    \end{align*}
    which implies~\ref{eq:Eprime}. Since $E^\prime$ has happened, Lemma~\ref{lmm:Eprime} implies that $\hat{V} = V_{k - 1}$ is sufficient.
\end{proof}
Theorem~\ref{thm:FOCI} is an immediate result of Lemma~\ref{lmm:EprimeSufficient} and \ref{lmm:rateEprime}.

\subsection{Proof of Theorem~\ref{thm:deltaAndPrime}}
\begin{proof}
    Note that in our Gaussian linear model we have
    \begin{align}
        Y=\beta\bbx + \varepsilon,\qquad \varepsilon\perp\bbx, \qquad \varepsilon\sim N(0, \sigma^2).
    \end{align}
    Therefore we have 
    \[
    Y\mid\bbx_S \sim N(Z_S, \sigma_S^2), \qquad Z_S = \ee[Y\mid\bbx_S], \qquad \sigma_S^2 = \var(Y\mid\bbx_S),
    \]
    where $Z_S$ is linear in $\bbx_S$ and $\sigma_S^2$ is a constant that does not depend on $\bbx_S$. Note that 
    \begin{align*}
        \rho(\emptyset, S) = R^2_S = R^2(Y;\bbx_S) = \frac{\var(Z_S)}{\var(Y)}\in[0, 1),
    \end{align*}
    and $(Y, Z_S)$ are jointly Gaussian with mean zero and
    \[
    \var(Y) = \tau^2, \qquad \var(Z_S) = \tau^2R^2_S,\qquad \cov(Y, Z_S) = \var(Z_S),
    \]
    and
    \begin{align*}
        \sigma_S^2 = \var(Y\mid\bbx_S) = \var(Y) - \var(Z_S) = \tau^2(1 - R^2_S).
    \end{align*}
    Let $R_*^2 = R^2(Y; \bbx)\in (0, 1)$. Also 
    \begin{align}\label{eq:smoothCond}
        \ee[\bone\{Y > t\}\mid\bbx_S] = \ee[\bone\{Y > t\}\mid Z_S] = \Phi(\frac{Z_S - t}{\sigma_S}) = \Phi\left(\frac{Z_S - t}{\tau(1  -R^2_S)^{1/2}}\right).
    \end{align}
    Thus the variance term in the numerator of the integrand in $\nu$ depends on $\bbx_S$ only via the one-dimensional Gaussian random variable $Z_S$, therefore $\nu(Y, \bbx_S) = \nu(Y, Z_S)$.

    Note that by~\eqref{eq:smoothCond}, $\nu(Y, \bbx_S)$ is a smooth function of $\tau^2$ and $R_S^2$, i.e. there exists $\psi:[0, R_*^2]\rightarrow[0, 1]$ such that $\nu(Y, \bbx_S) = \psi(R_S^2)$. In other words, in this Gaussian linear setting, $\nu$ is just a scalar function of the usual $R^2$ such that (i) $\psi(0) = 0$, (ii) $\psi$ is strictly increasing and smooth on $[0, R^2_*]$. Thus $\psi^\prime$ is continuous and strictly positive on $[0, R^2_*]$. Define
    \begin{align}\label{eq:unifBound}
        m := \min_{r\in[0, R^2_*]} \psi^\prime(r) > 0, \qquad M := \max_{r\in[0, R^2_*]} \psi^\prime(r) < \infty.
    \end{align}
    For any insufficient $S$ and $j\notin S$, note that
    \begin{align*}
        \rho(S, j) = \frac{R^2_{S\cup\{j\}} - R^2_S}{1 - R^2_S},
    \end{align*}
    and therefore $R^2_{S\cup\{j\}} =  R^2_S + \rho(S, j)(1 - R^2_S)$. We have 
    \begin{align*}
        \Delta\nu_{S,j} := \nu(Y, \bbx_{S\cup\{j\}}) - \nu(Y, \bbx_S) = \psi(R^2_{S\cup\{j\}}) - \psi(R^2_{S}).
    \end{align*}
    By the mean value theorem, there exists $\xi_{S, j}\in[R^2_S, R^2_{S\cup\{j\}}]$ such that 
    \begin{align*}
        \Delta\nu_{S,j} = \psi^\prime(\xi_{S, j})(R^2_{S\cup\{j\}} - R^2_S).
    \end{align*}
    Using the uniform bounds~\eqref{eq:unifBound} on $\psi^\prime$ and the fact that $R^2_S \leq R^2_*$ for all $S$ we have 
    \begin{align*}
         m\rho(S, j)(1 - R^2_*) \leq \Delta\nu_{S,j} \leq M\rho(S, j)(1 - R^2_*).
    \end{align*}
    Let $c:= m(1 - R^2_*)$ and $C:= M(1 - R^2_*)$. Recall from definition of $\delta$ and $\delta^\prime$ that
    \begin{align*}
        \delta^\prime = \inf_{S\text{ is insufficient}}\max_{j\notin S} \rho(S, j), \qquad \delta = \inf_{S\text{ is insufficient}}\max_{j\notin S} \Delta\nu_{S,j}.
    \end{align*}
    Then for each insufficient $S$,
    \begin{align*}
         c\max_{j\notin S}\rho(S, j) \leq \max_{j\notin S}\Delta\nu_{S,j} \leq C\max_{j\notin S}\rho(S, j).
    \end{align*}
    Then taking infimum over all insufficient $S$ we have
    \begin{align*}
         c\delta^\prime \leq \delta \leq C\delta^\prime.
    \end{align*}
    Note that $m$ and $M$ only depend on $\psi$ which depends on $\tau$. Additionally $R^2_*$ depends only on $\sigma$ and $\tau$. Therefore $c$ and $C$ are constants that only depend on $\tau$ and $\sigma$.
\end{proof}

\bibliographystyle{plain}
\bibliography{ref}
\end{document}